\documentclass[letterpaper,11pt,oneside,reqno]{amsart}

\usepackage[pdftex,backref=page,colorlinks=true,linkcolor=blue,citecolor=red]{hyperref}
\usepackage[alphabetic,nobysame]{amsrefs}

\usepackage{amsmath,amssymb,amsthm,amsfonts,mathtools}
\usepackage{graphicx,color}
\usepackage{upgreek}
\usepackage{esint}
\usepackage[mathscr]{euscript}
\usepackage[shortlabels]{enumitem}
\setlist[enumerate]{leftmargin=*}

\allowdisplaybreaks
\numberwithin{equation}{section}

\usepackage{tikz}
\usepackage{pgfplots}
\pgfplotsset{compat=1.18}
\usetikzlibrary{shapes,arrows,positioning,decorations.markings}

\usepackage{ytableau}
\usepackage{array}
\usepackage{adjustbox}
\usepackage{aliascnt}
\usepackage{cleveref}
\usepackage[margin=10pt]{caption}
\usepackage{booktabs}

\usepackage[DIV=12]{typearea}

\newcommand{\ssp}{\hspace{1pt}}
\renewcommand{\le}{\leqslant}
\renewcommand{\ge}{\geqslant}

\newcommand{\Prob}{\operatorname{\mathbb{P}}}

\DeclareMathOperator{\Des}{Des}
\DeclareMathOperator{\maj}{maj}

\newcommand{\brelbow}[2]{
  \draw[line width=2.5pt] ({#1+0.5},{#2}) to[out=90,in=180] ({#1+1},{#2+0.5});}
\newcommand{\bjelbow}[2]{
  \draw[line width=2.5pt] ({#1},{#2+0.5}) to[out=0,in=270] ({#1+0.5},{#2+1});}
\newcommand{\bvert}[2]{
  \draw[line width=2.5pt] ({#1+0.5},{#2}) -- ({#1+0.5},{#2+1});}
\newcommand{\bhoriz}[2]{
  \draw[line width=2.5pt] ({#1},{#2+0.5}) -- ({#1+1},{#2+0.5});}
\newcommand{\bcross}[2]{
  \draw[line width=2.5pt] ({#1},{#2+0.5}) -- ({#1+1},{#2+0.5});
  \draw[line width=2.5pt] ({#1+0.5},{#2}) -- ({#1+0.5},{#2+1});}
\newcommand{\bbox}[2]{
  }

\newcommand{\excise}[1]{}

\newtheorem{proposition}{Proposition}[section]
\newaliascnt{lemma}{proposition}
\newtheorem{lemma}[lemma]{Lemma}
\aliascntresetthe{lemma}
\newaliascnt{corollary}{proposition}
\newtheorem{corollary}[corollary]{Corollary}
\aliascntresetthe{corollary}
\newaliascnt{theorem}{proposition}
\newtheorem{theorem}[theorem]{Theorem}
\aliascntresetthe{theorem}
\newaliascnt{conjecture}{proposition}
\newtheorem{conjecture}[conjecture]{Conjecture}
\aliascntresetthe{conjecture}
\newaliascnt{claim}{proposition}

\aliascntresetthe{claim}
\newaliascnt{question}{proposition}
\newtheorem{question}[question]{Question}
\aliascntresetthe{question}

\crefname{proposition}{proposition}{propositions}
\Crefname{proposition}{Proposition}{Propositions}
\crefname{lemma}{lemma}{lemmas}
\Crefname{lemma}{Lemma}{Lemmas}
\crefname{corollary}{corollary}{corollaries}
\Crefname{corollary}{Corollary}{Corollaries}
\crefname{theorem}{theorem}{theorems}
\Crefname{theorem}{Theorem}{Theorems}
\crefname{conjecture}{conjecture}{conjectures}
\Crefname{conjecture}{Conjecture}{Conjectures}
\crefname{claim}{claim}{claims}
\Crefname{claim}{Claim}{Claims}
\crefname{question}{question}{questions}
\Crefname{question}{Question}{Questions}

\theoremstyle{definition}
\newaliascnt{definition}{proposition}
\newtheorem{definition}[definition]{Definition}
\aliascntresetthe{definition}
\newaliascnt{remark}{proposition}
\newtheorem{remark}[remark]{Remark}
\aliascntresetthe{remark}
\newaliascnt{example}{proposition}

\aliascntresetthe{example}
\newaliascnt{observation}{proposition}

\aliascntresetthe{observation}
\crefname{definition}{definition}{definitions}
\Crefname{definition}{Definition}{Definitions}
\crefname{remark}{remark}{remarks}
\Crefname{remark}{Remark}{Remarks}
\crefname{example}{example}{examples}
\Crefname{example}{Example}{Examples}
\crefname{observation}{observation}{observations}
\Crefname{observation}{Observation}{Observations}

\begin{document}

\title{Computation and sampling for Schubert specializations}

\author{David Anderson} \address{Ohio State University, Columbus, OH} \email{anderson.2804@math.osu.edu} \author{Greta Panova}\address{University of Southern California, Los Angeles, CA}\email{gpanova@usc.edu}  \author{Leonid Petrov} \address{University of Virginia, Charlottesville, VA}\email{lenia.petrov@gmail.com}
\date{March 20, 2026}
\thanks{DA was partially supported by NSF grant DMS-1945212 and by a Membership at the Institute for Advanced Study funded by the Charles Simonyi Endowment. GP was partially supported by NSF grant CCF:AF-2302174. LP was partially supported by NSF grant DMS-2153869 and by the Simons Foundation Travel Support for Mathematicians Awards. Part of this research was performed while GP and LP were visiting the Institute for Pure and Applied Mathematics (IPAM), supported by NSF Grant No.\ DMS-1925919. The authors acknowledge Research Computing at the University of Virginia for providing computational resources and technical support (\url{https://rc.virginia.edu}).}

\begin{abstract}
We present computational results related to
principal specializations of
the Schubert polynomials $\mathfrak{S}_w(1^n)$ for permutations $w\in S_n$.
Equivalently, these specializations count reduced pipe dreams
(and reduced bumpless pipe dreams -- RBPD) with boundary conditions determined by $w$.
We find the first counterexample, at $n=17$, to the conjecture of Merzon-Smirnov~\cite{merzon2016determinantal} that the maximal value of $\mathfrak{S}_w(1^n)$ is obtained at a layered permutation. However, the simulations suggest that $\lim_{n \to \infty} \log(\max_{w\in S_n}\mathfrak{S}_w(1^n))/n^2$ is the same constant arising for layered permutation from~\cite{MoralesPakPanova2019}.  Simultaneously, we explore the typical permutation obtained from uniformly random RBPDs, i.e. drawn from the distribution proportional to $\mathfrak{S}_w(1^n)$. Our simulations reveal a permuton-like asymptotic behavior similar to the one derived for the analogous problem for Grothendieck polynomials in~\cite{GrothendieckShenanigans2024}. 

We implement and compare the performance of three recurrence relations for computing
$\mathfrak{S}_w(1^n)$:
the descent formula of Macdonald, the transition formula of
Lascoux--Sch\"utzenberger, and the cotransition formula of Knutson.
We investigate Markov chain algorithms for sampling uniformly random
reduced bumpless pipe dreams (whose number is $\sum_{w\in S_n} \mathfrak{S}_w(1^n)$).
We prove a negative result:
the global constraint of reducedness
breaks the sublattice property of the underlying
alternating sign matrix (ASM) lattice,
which prevents the use of the standard monotone
Coupling From The Past (CFTP),
and leads to false coalescence of the extremal chains.
To bypass this,
we develop a highly efficient Markov chain
Monte Carlo (MCMC)
sampler,
augmented with macroscopic ``droop'' updates
to guarantee state space connectivity and accelerate mixing.
Our implementations enable
computation of $\mathfrak{S}_w(1^n)$ for permutations $w$ in $S_n$ up to
$n\sim 20$ on a personal computer
(and beyond on a computing cluster),
as well as uniform sampling
of reduced bumpless pipe dreams
up to $n\sim 60$ on a personal computer
(and $n\sim 100$ on a cluster).
\end{abstract}

\maketitle

\setcounter{tocdepth}{1}
\tableofcontents
\setcounter{tocdepth}{4}

\section{Introduction}
\label{sec:intro}

It is difficult to find a black cat in a dark room, especially if the cat is
not there. Likewise, it is difficult to prove a conjecture, especially if it is
wrong. Here we investigate the saga of the maximal principal specializations of
Schubert polynomials, initiated by Stanley in the aptly named ``shenanigans''
paper~\cite{stanley2017some} which asked for the leading term asymptotics of
the maximal principal specialization of a Schubert polynomial. Asymptotically,
this is the same as the number of reduced bumpless pipe dreams (RBPDs), a
six-vertex model with long-range interaction. 
Merzon and Smirnov~\cite{merzon2016determinantal} conjectured that the maximum of principal specializations
is achieved by a layered permutation, 
and Morales, Pak, and Panova~\cite{MoralesPakPanova2019} determined the optimal layered permutation and its asymptotics.
For
$n\leq 13$, the maximizer is indeed a layered permutation.

In this paper we discover
a slightly larger Schubert polynomial for $n=17$, thereby disproving the
conjectured maximal family. At the same time, we explore the typical
permutations sampled from uniformly random RBPDs, that is, permutations $w$
drawn with probabilities proportional to the principal specialization
$\mathfrak{S}_w(1^n)$ of their Schubert polynomials. We experimentally derive
the conjectured permuton and limit shapes for the RBPD. The computational data
resembles the behavior shown to hold for the analogous Grothendieck polynomials
in~\cite{GrothendieckShenanigans2024} and lends support to the conjectured asymptotic
behavior. Obtaining these experimental results is a significant computational
challenge which we tackle in this paper. It is still not practically possible
to search for the permutation maximizing the Schubert polynomial for values of
$n>20$, but the counterexamples and evidence we produce show that such
permutations may not have a simple defining structure, yet the asymptotic growth of the principal Schubert specializations is the same as for layered. 

\subsection{Background and definitions}
Let $S_n$ denote the group of permutations of $n$ elements.
Schubert polynomials $\mathfrak{S}_w$,
indexed by $w\in S_n$ and depending on $n$ variables
$x_1,\ldots,x_n$,
are fundamental objects in algebraic combinatorics and algebraic geometry which give a basis for the homology of flag varieties.  For a thorough survey of the role of Schubert polynomials in enumerative geometry and algebraic combinatorics, see \cite{BilleyGaoPawlowski2025}.

Schubert polynomials admit two combinatorial interpretations of significance to statistical mechanics, namely through tilings with certain global constraints.

The first, introduced by Bergeron and Billey
\cite{BilleyBergeron} and Fomin and Kirillov \cite{FK},
represents Schubert polynomials as generating functions over
\emph{reduced pipe dreams} (also called RC-graphs).
These are configurations of crossing and elbow tiles in a
staircase shape, satisfying the constraint that each pair of pipes
crosses at most once (the \emph{reduced} condition).
To each reduced pipe dream one can associate its \emph{boundary permutation} $w\in S_n$
by tracing where the pipes of all colors exit at the top.
See \Cref{fig:pipe_dreams_intro} for an illustration.

\begin{figure}[htbp]
\centering
\includegraphics[width=0.5\textwidth]{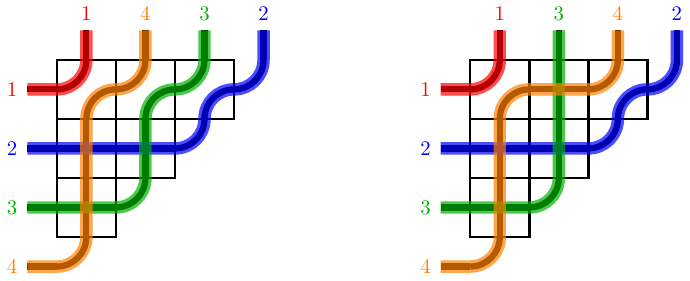}
\caption{Pipe dreams for $n=4$.
{Left:} A reduced pipe dream for $w = 1432$ where each pair of pipes crosses at most once.
{Right:} A non-reduced (forbidden) pipe dream where pipes 3 and 4 cross twice.
Color is added merely as a visual aid to highlight the pipes' paths.}
\label{fig:pipe_dreams_intro}
\end{figure}

More recently,
Schubert polynomials were interpreted as generating functions over
objects of another type,
\emph{reduced bumpless pipe dreams} (RBPD, for short).
This interpretation was promoted by
Lam, Lee, and Shimozono \cite{LamLeeShimozono2021BPD},
and further developed by Weigandt \cite{Weigandt2020_bumpless}, who also observed a connection with earlier work by Lascoux \cite{Lascoux02ice}.  
Bumpless pipe dreams are tilings of an $n\times n$ square grid with six types of tiles,
subject to the same global \emph{reduced} condition that each pair of pipes
crosses at most once. 
Without the reducedness constraint,
bumpless pipe dreams are the same as configurations of the
six-vertex (square ice) model with domain wall boundary conditions.
These configurations are well-known to be in bijection with alternating sign matrices,
see, e.g., Bressoud \cite{Bressoud1999} and
Zinn-Justin \cite{ZinnJustin20096Vertex}.

Similarly to the ordinary pipe dreams in the staircase,
to each RBPD one can associate its \emph{boundary permutation} $w\in S_n$
by tracing where the pipes of all colors exit through the right side.
See \Cref{fig:bpd_examples_intro} for an illustration.

\begin{figure}[htbp]
\centering
\includegraphics[width=0.6\textwidth]{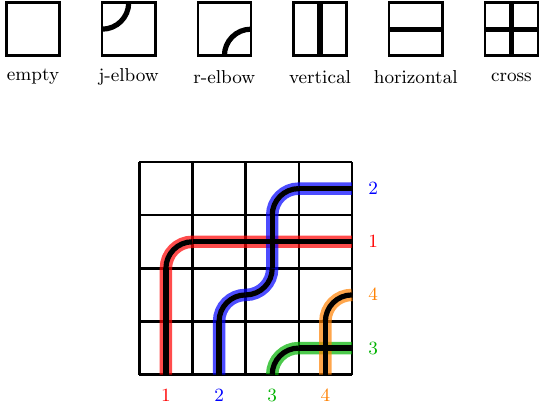}
\caption{\textbf{Top:} The six tile types used in bumpless pipe dreams.
\textbf{Bottom:} A reduced bumpless pipe dream for $n=4$ corresponding to the permutation
$w=2143$.}
\label{fig:bpd_examples_intro}
\end{figure}

\medskip
Schubert polynomials $\mathfrak{S}_w(x_1,\ldots,x_n)$
are defined by attaching variables $x_i$ to the rows of these combinatorial objects.
(More generally, one can assign two sets of variables $x_i, y_j$ in a suitable way to get double Schubert polynomials, see Lascoux-Sch\"utzenberger \cite{LascouxSchutzenberger1985Interpolation}, Macdonald \cite{MacdonaldSchubertBook}.) 
Since here we focus only on the principal specializations,
we simply define
\begin{equation}
	\label{eq:schubert_specialization}
	\begin{split}
		\Upsilon_w&\coloneqq\mathfrak{S}_w(1^n) =\mathfrak{S}_w(\underbrace{1, 1, \ldots, 1}_n)
		\\&\coloneqq \#\{\text{reduced pipe dreams of size } n \text{ with boundary permutation } w\}
		\\&\ssp\ssp= \#\{\text{reduced bumpless pipe dreams of size } n \text{ with boundary permutation } w\}.
	\end{split}
\end{equation}
We recall the full definition and
relevant background results
on Schubert polynomials in
Appendix~\ref{app:schubert_background}.

\subsection{Asymptotics of Schubert polynomial evaluations}
\label{sec:limits-and-conj}
Stanley \cite{stanley2017some} posed the following fundamental question
about the asymptotic behavior of the principal specializations
\eqref{eq:schubert_specialization}:
does the limit
\begin{equation}
	\label{eq:stanley_question}
	\lim_{n\to \infty} \frac{1}{n^2} \log_2 \,\max_{w \in S_n} \Upsilon_w
\end{equation}
exist, and if so, what is its value and for which permutations $w$
is the maximum value of $\Upsilon_w$ achieved?
This question remains open. Morales, Pak, and Panova \cite{MoralesPakPanova2019} established a lower bound of approximately $0.29$ for this limit by using \emph{layered permutations}
(permutations whose diagram consists of consecutive decreasing blocks,
e.g., $32165487$).  An upper bound of approximately $0.37$ follows from a connection with
Alternating Sign Matrices and the six-vertex model. 
We refer to Morales, Panova, Petrov, and Yeliussizov \cite[Section~6]{GrothendieckShenanigans2024} for further discussion.

In~\cite{merzon2016determinantal} Merzon-Smirnov observed
that the maximum of $\Upsilon_w$ is achieved
by layered permutations for values $n\leq 10$ and this formed the basis for the working conjecture:
\begin{conjecture}[\cite{merzon2016determinantal}]
\label{conj:merzon_smirnov}
    The maximal value of $\Upsilon_w$ for $w\in S_n$ is achieved when $w$ is a layered permutation.
\end{conjecture}

This conjecture was exhaustively verified by one of us (DA) for $n \leq 13$ in February 2025. The maximal values for $n\leq 12$ were reported in \cite[sequence A284661]{OEIS}. For $n=13$, the tools developed in the present paper greatly improve the speed of previous computations (and confirm their reliability); this is the first report of the verified maximal value of $\Upsilon_w$ for $w\in S_{13}$ (see \Cref{prop:full_search_13}).

In May 2025, Adam Zsolt Wagner (along with DA and Alejandro Morales) deployed Google DeepMind's FunSearch \cite{romera-paredes2024FunSearch} to seek counterexamples to \Cref{conj:merzon_smirnov}. For $n\leq 16$ the heuristics found by the model did not uncover any counterexamples, providing weak evidence in favor of the conjecture in this range. (For larger $n$, time constraints limited the power of this method.)

At $n=17$, however, using our improved computational methods together with the simple idea of looking at permutations that are ``close'' to layered, we disprove \Cref{conj:merzon_smirnov}.

\begin{theorem}
\label{prop:merzon_smirnov_disproved}
\Cref{conj:merzon_smirnov} is false.
The non-layered permutation
\begin{equation}
\label{eq:counterexample}
w^*=(1,3,2,7,6,5,17,4,16,15,14,13,12,11,10,9,8) \in S_{17},
\end{equation}
obtained from the optimal layered permutation $w(1,2,4,10)$
by a single adjacent transposition $(s_7)$,
satisfies
\[
	\Upsilon_{w^*} = 3{,}272{,}424{,}600{,}397{,}137{,}120{,}000,
\]
exceeding the layered maximum
\[
	\Upsilon_{w(1,2,4,10)} = 3{,}050{,}684{,}475{,}186{,}219{,}300{,}000
\]
by about $7\%$.
The permutation
\begin{equation}
\label{eq:counterexample2}
u^*=(1,3,2,8,6,5,17,4,16,15,14,13,12,11,10,9,7) \in S_{17},
\end{equation}
obtained by two transpositions from $w(1,2,4,10)$,
satisfies
\[
\Upsilon_{u^*}=3{,}528{,}445{,}515{,}842{,}977{,}489{,}500 ,
\]
exceeding $\Upsilon_{w(1,2,4,10)}$ by about $15.6 \%$. 
\end{theorem}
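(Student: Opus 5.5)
This is a computational statement, so the plan is to verify three exact integers and argue that the layered value used for comparison really is the layered maximum in $S_{17}$. The argument has three parts.

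\emph{Layered maximum.} By \cite{MoralesPakPanova2019}, a layered permutation $w(b_1,\dots,b_k)$ has a product formula for $\Upsilon$, up to reordering of blocks. Reducing a layered permutation with a block of size $1$ at the bottom gives the relevant recursive structure, and the formula lets one evaluate $\Upsilon_{w(b)}$ for every composition $b$ of $17$; there are $2^{16}$ of these. Exhaustively maximizing over them, or using the monotonicity and ordering results of \cite{MoralesPakPanova2019} to restrict to the relevant shapes, should confirm that $w(1,2,4,10)$ is optimal. Its value can also be computed independently by the recursions below as a consistency check.

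\emph{Computing $\Upsilon_{w^*}$ and $\Upsilon_{u^*}$.} I would use exact big-integer arithmetic with memoization and one of the recurrences mentioned in the abstract. The natural choice is the Lascoux--Sch\"utzenberger transition formula. Take the last descent $r$ of $w$ and the largest $s>r$ with $w(s)<w(r)$. Then
\[\mathfrak{S}_w = x_r\,\mathfrak{S}_{v} + \sum \mathfrak{S}_{v t_{jr}},\qquad v=wt_{rs},\]
where the sum runs over $j<r$ with $\ell(vt_{jr})=\ell(w)$. Specializing at $x=1^n$ gives $\Upsilon_w=\Upsilon_v+\sum_j \Upsilon_{vt_{jr}}$. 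The recursion terminates at dominant permutations, which are products of variables and so have $\Upsilon=1$.

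The recursion may pass to $S_{18}$. Cap this by the stability of $\mathfrak S_w$ under the embedding $S_n\hookrightarrow S_{n+1}$, which is harmless for evaluation at all ones. To gain confidence, I would recompute each value with a second recurrence: Macdonald's descent formula $\Upsilon_w=\frac{1}{\ell(w)!}\sum_{a\in R(w)} \prod_i a_i$ over reduced words, computed by dynamic programming on $w s_i$ in weak order, or Knutson's cotransition formula. Agreement of the two independent answers, together with matching the known values from A284661 for $n\le 12$, certifies the numbers.

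\emph{Comparison.} Once the integers are fixed, the ratio $3272424600397137120000/3050684475186219300000\approx 1.0727$ is a one-line check. The other ratio, $\approx1.156$, is equally immediate. It also remains to check that $w^*$ and $u^*$ are not layered. For $w^*$, the entry $17$ in position $7$ followed by $4$ in position $8$ breaks the block structure. The combinatorial claim that $w^*=w(1,2,4,10)\,s_7$ is a direct check.

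The main obstacle is computational cost. The memo table for the transition recursion at $n=17$ can be very large, since the number of distinct permutations visited grows fast. I would first try ordering the recursion so that branches close quickly onto dominant or Grassmannian permutations, where closed formulas via flagged Schur determinants, as in \cite{merzon2016determinantal}, short-circuit the recursion. If memory still becomes an issue, I would switch to the cotransition formula, which decreases length and tends to visit fewer states.
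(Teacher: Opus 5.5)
Your plan is correct and essentially matches the paper's proof. The paper's proof is a direct exact computation of the three values via the transition and cotransition recurrences, cross-checked across implementations, and it takes the optimality of $w(1,2,4,10)$ among layered permutations from \cite{MoralesPakPanova2019}; your exhaustive check over the $2^{16}$ compositions is a harmless extra. Two slips do not affect the argument: the transition recursion you wrote never leaves $S_{17}$, since $v$ and all $v t_{jr}$ lie in $S_{17}$, and the cotransition formula \emph{increases} length, moving toward $w_0$.
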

\begin{proof}
Direct computation using the (co)transition formula (Sections~\ref{sec:trans}--\ref{sec:cotrans}).
\end{proof}

While layered permutations fail to achieve the absolute maximum value of $\Upsilon_w$ for large $n$, the general bounds found in Section~\ref{sec:asymp} (\Cref{Prop:bounds} and \Cref{cor:bounds}) imply that the maximal asymptotic behavior appears to be the same, whether one considers all permutations or only layered ones. 

\subsection{Computation and sampling}
\label{sec:intro-compute}

Motivated by these open questions,
we develop computational tools for exploring
principal specializations of Schubert polynomials and the underlying
combinatorial structures. 
We address two main questions:
\begin{itemize}[itemsep=5pt]
	\item How to \textbf{efficiently compute} $\Upsilon_w$ for permutations
	$w\in S_n$ when $n$ is large?
	\item What does a {\bf typical reduced bumpless pipe dream} of size $n$ look like,
	when sampled uniformly at random from all RBPDs
	(regardless of the corresponding permutation~$w$)?
\end{itemize}

The lower bound of about $0.29$ for the (conjectural) limit \eqref{eq:stanley_question} was obtained in \cite{MoralesPakPanova2019} by using an explicit product formula for $\Upsilon_w$
when $w$ is a layered permutation.  This formula enables efficient optimization over layered permutations.
For general permutations, by contrast, even if one can compute $\Upsilon_w$
efficiently for any given $w$, exhaustively searching over all $n!$ permutations
in $S_n$ to find the maximum is infeasible even for moderate $n$ (say, $n>13$). 
This motivated us to study \emph{typical} permutations (in the sense of random
reduced bumpless pipe dreams) and, in particular, compare the values of $\Upsilon_w$ on them with the maximum over layered permutations known from \cite{MoralesPakPanova2019}.

Despite the counterexamples, the simulations and observed \emph{typical} behavior suggest that the asymptotic maximum is still the same as for layered, so we expect that
\[
\limsup_{n\to \infty} \frac{1}{n^2} \log_2 \,\max_{w \in S_n} \Upsilon_w < 0.3
\]
and that the actual limit exists.

\begin{remark}[Typical vs.\ maximal]
The RBPD-typical permutation, whose limiting shape is conjecturally described by the permuton in \Cref{conj:permuton}, does not resemble a layered permutation at large scale, and its $\Upsilon_w$ may be significantly smaller than the maximum.
\end{remark}

For sampling, we first attempted to apply the standard monotone Coupling From The Past (CFTP) algorithm.
However, we show that reduced bumpless pipe dreams do not form a sublattice
of the alternating sign matrix lattice (\Cref{sec:cftp_failure}).
This breaks the monotone coupling required by CFTP: 
the internal rejection scheme needed to enforce reducedness
can cause the two extremal chains to cross, 
invalidating the sandwiching argument. 
Instead, we implement a specialized
Markov chain Monte Carlo (MCMC)
algorithm with macroscopic block updates (\Cref{sec:mcmc_sampler}).

\smallskip

Based on extensive simulations,
we pose {\bf \Cref{conj:permuton} on the limiting permuton} structure
and other properties of typical Schubert permutations and 
{\bf \Cref{conj:delta_boundary} on the limit shape and arctic curves of uniformly random reduced bumpless pipe dreams}
in \Cref{sec:further}. The Schubert and Grothendieck permutons show striking similarities as pictured in Figure~\ref{fig:schubert_grothendieck}. The Grothendieck permuton arises from a non-reduced BPD model and is analyzed rigorously using integrable probability techniques coming from TASEP in~\cite{GrothendieckShenanigans2024}. The requirement in the Schubert model that no pipes cross more than once makes it a nonlocal model and not amenable to integrable techniques at present\footnote{The Schubert model can also be interpreted as a colored vertex model removing the ``long-range interaction'' condition. However, then the number of colors is $n$ and asymptotic analysis cannot be performed.}.

\begin{figure}
    \centering
    \includegraphics[width=0.4\linewidth]{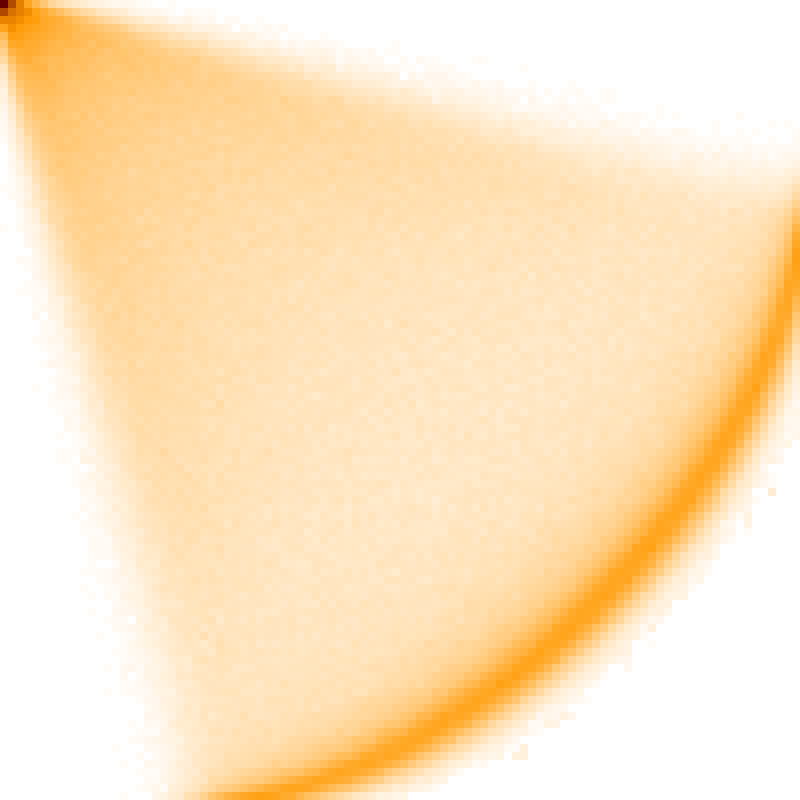} $\qquad$ \includegraphics[width=0.4\textwidth]{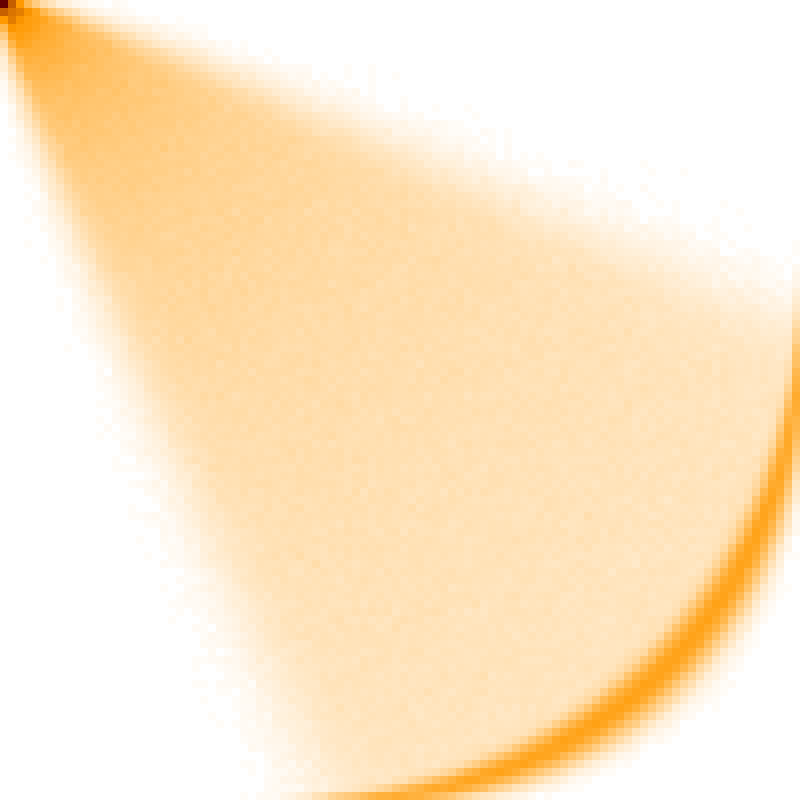}
		\caption{The simulated density histogram for $n=100$ for the Schubert
		(left) and Grothendieck (right) random
		permutations, displaying striking similarities in behavior.
		The permuton limit for the Grothendieck case is proven in \cite[Theorem~1.5]{GrothendieckShenanigans2024}, while the Schubert case is open.}
    \label{fig:schubert_grothendieck}
\end{figure}

\subsection*{Acknowledgements}
We are grateful to the many colleagues in the algebraic combinatorics and integrable probability communities who have expressed interest in this topic.  In particular, we benefited from conversations with Amol Aggarwal, William Fulton, Vadim Gorin, Daoji Huang, Allen Knutson, David Speyer, and Adam Zsolt Wagner; from Igor Pak's rightful suspicion of Conjecture~\ref{conj:merzon_smirnov}; and from fruitful discussions with Richard Stanley about \eqref{eq:stanley_question} and related questions.

A special acknowledgement goes to Alejandro Morales for his earlier work and computations with principal specializations, and for popularizing this topic; as well as to Linus Setiabrata for insightful attempts at proving Conjecture~\ref{conj:merzon_smirnov}.  We heartily thank both of them for many enjoyable and inspiring conversations.

\subsection*{Note on AI tools}

We used Claude Code as a coding assistant for optimizing the Schubert evaluation and sampling code, and then subjected our code to thorough testing and verification across multiple implementations (C++ and Julia).
After we found the first permutation
$w^*$ \eqref{eq:counterexample} violating \Cref{conj:merzon_smirnov}, the second example $u^*$ \eqref{eq:counterexample2} was obtained with the help of ChatGPT-5.4 Pro, in the process of verifying the computation of $\Upsilon_{w^*}$.

\section{Computation of Schubert specializations}
\label{sec:computation}

We use three recurrence formulas for computing
$\Upsilon_w$.
In this section, we state these formulas and discuss their implementation details.
Performance benchmarks and comparisons of the implementations are presented in \Cref{sec:comparison}.

\subsection{Notation for permutations}
\label{sec:notation}

We fix some standard notation:
$\ell(w)$ denotes the number of inversions of $w$,
$s_i = (i,i+1)$ are adjacent transpositions, $t_{i,j}=(i,j)$ are not necessarily adjacent transpositions,
$\Des(w) = \{i : w(i) > w(i+1)\}$ is the descent set,
$e = (1,2,\ldots,n)$ is the identity, and
$w_0 = (n,n-1,\ldots,1)$ is the longest permutation.
We use right multiplication on one-line notation:
$w\cdot(a,b)$ is obtained from $w$ by swapping the entries in positions $a,b$.
A word $(a_1,\ldots,a_k)$ with $1 \le a_i < n$
is called a \emph{reduced word} for a permutation $w \in S_n$
if $w = s_{a_1} s_{a_2} \cdots s_{a_k}$ and $k = \ell(w)$
(that is, this expression for $w$ in elementary transpositions
is of minimal possible length).

The permutation matrix associated to $w$ has $1$'s in positions $(i,w(i))$ and $0$'s elsewhere.  Often we indicate the positions of $1$'s by a dot, leaving the $0$'s blank.

\subsection{Descent formula}
\label{sec:descent}

The descent recurrence formula \cite{MacdonaldSchubertBook}, \cite{FominStanley1994}, \cite{BilleyHolroydYoung2019} expresses
$\Upsilon_w$ in terms of values at permutations of smaller length,
where the sum is over the descents:

\begin{theorem}[Descent formula]
	\label{thm:descent}
	For $w \in S_n$ with $w \ne e$, we have:
	\begin{equation}
		\label{eq:descent}
		\Upsilon_w = \sum_{i \in \Des(w)}
		\frac{i}{\ell(w)} \cdot \Upsilon_{w \cdot s_i} = \sum_{i \in \Des(w^{-1})} \frac{i}{\ell(w)} \cdot \Upsilon_{s_i \cdot w}.
	\end{equation}
	The base case is $\Upsilon_e = 1$.
\end{theorem}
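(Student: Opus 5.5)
The natural route is Macdonald's identity expressed through the pipe-dream model from \eqref{eq:schubert_specialization}. We prove the first equality by a weighted double count, and the second follows by symmetry.

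Let $\mathrm{RP}(w)$ denote the set of reduced pipe dreams for $w$. Each crossing tile sits in a row $r\ge 1$. For each $D\in\mathrm{RP}(w)$ and each crossing $c\in D$ with row $r(c)$, record the pair $(D,c)$ with weight $r(c)$. Summing these weights gives
\[
\sum_{D\in \mathrm{RP}(w)} \sum_{c\in D} r(c)=\Bigl(\sum_i i\,\partial_{x_i}\Bigr)\Bigl(\mathfrak{S}_w\Bigr)\Big|_{x=1}.
\]
So it suffices to prove Macdonald's identity
\[
\sum_i i\,\partial_{x_i}\mathfrak{S}_w\big|_{x=1}=\ell(w)\ \text{-weighted}\ \dots
\]
More precisely, the target is
\[
\ell(w)\,\Upsilon_w=\sum_{i\in\Des(w)} i\,\Upsilon_{ws_i}.
\]

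There are two options for reaching this. One is to cite Macdonald's formula directly:
\[
\sum_{a\in R(w)} a_1\cdots a_\ell=\ell!\,\Upsilon_w,
\]
where $R(w)$ is the set of reduced words. One then groups the sum according to the last letter $a_\ell=i$. This letter is necessarily a descent of $w$, and $(a_1,\dots,a_{\ell-1})$ runs over $R(ws_i)$. Applying the identity at length $\ell-1$ gives
\[
\ell!\,\Upsilon_w=\sum_{i\in\Des(w)} i\,(\ell-1)!\,\Upsilon_{ws_i},
\]
and dividing by $\ell!$ yields \eqref{eq:descent}.

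Macdonald's identity itself is proved by induction on $\ell$, using the bijective proof of Fomin--Kirillov/Billey--Holroyd--Young. The key step there is the bijection between the following two sets:
\begin{itemize}
\item pairs $(a,\text{compatible sequence})$, whose count is $\sum_a a_1\cdots a_\ell$;
\item pairs $(D,\text{a labeling of the crossings of }D\text{ by }1..\ell)$, i.e.\ pipe dreams with a linear extension of their crossings, whose count is $\ell!\,\Upsilon_w$.
\end{itemize}
Establishing this bijection is the main obstacle, since it is a genuinely nontrivial combinatorial construction (BHY's "bumping" argument). Accordingly, I would invoke it from \cite{BilleyHolroydYoung2019} rather than reprove it.

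For the second equality, use $\mathfrak{S}_{w^{-1}}(1^n)=\mathfrak{S}_w(1^n)$, which holds because transposing a pipe dream gives a pipe dream for $w^{-1}$. Apply the first equality to $w^{-1}$, and use the facts that $\ell(w^{-1})=\ell(w)$ and $(w^{-1}s_i)^{-1}=s_iw$. The base case $\Upsilon_e=1$ is immediate, since the only pipe dream for $e$ is the one with no crossings.
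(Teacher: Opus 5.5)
Your argument is correct and is essentially the paper's: cite Macdonald's reduced-word formula and split $R(w)$ by the last letter, which is a descent of $w$. The only difference is the second sum. The paper splits by descents of $w^{-1}$ directly (i.e., by first letters), while you deduce it from $\Upsilon_{w^{-1}}=\Upsilon_w$. That is equally valid, and the symmetry also follows at once from Macdonald's formula by reversing reduced words.

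Two minor points do not affect the proof, since you use Macdonald's formula as a black box. The opening weighted count with $\sum_i i\,\partial_{x_i}$ is an unused detour. In your BHY sketch, the objects counted by $\sum_a a_1\cdots a_\ell$ are pairs $(a,b)$ with $1\le b_j\le a_j$, not compatible sequences.
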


Note that each permutation $w \cdot s_i$ occurring on the RHS of \eqref{eq:descent} has length $\ell(w)-1$, so the recurrence terminates at $\Upsilon_e=1$.

\begin{proof}[Proof of \Cref{thm:descent}]
	From \cite[(6.11)]{MacdonaldSchubertBook}, we have
	\begin{equation}
	\label{eq:macdonald_formula}
		\Upsilon_w = \frac{1}{\ell(w)!} \sum_{(a_1,\ldots,a_{\ell(w)}) \in R(w)} a_1 a_2 \cdots a_{\ell(w)},
	\end{equation}
	where $R(w)$ is the set of reduced words for $w$.
	Splitting the sum according to the descents of $w$ (or $w^{-1}$ for the second sum) and using
	\eqref{eq:macdonald_formula} for the resulting shorter permutations
	$w \cdot s_i$ yields the desired recurrence.
\end{proof}

Let us discuss the implementation of the descent formula of \Cref{thm:descent}.
	\begin{enumerate}[{\bf 1.}]
		\item \textbf{Permutation encoding}:
		Permutations are encoded as 128-bit integers using 5 bits per element,
		supporting $n \le 25$, which enables fast hashing and efficient memory usage.
		Adjacent transposition $w \mapsto w \cdot s_i$ is performed via bitwise operations
		directly on the packed representation.
		For enumeration over $S_n$ (\Cref{sec:unified_max}), we use a more compact
		64-bit encoding (4 bits per element) for $n \le 16$, halving memory usage.

		\item \textbf{Preprocessing.}
		Before running the recurrence, we strip trailing fixed points of $w$
		(suffix positions where $w(k)=k$), since this does not change $\Upsilon_w$,
		to reduce the effective size of the problem, if possible.

		\item \textbf{Memoization and length decrement.}
		We store computed values in a hash table keyed by the packed permutation,
		with a hard cap on the number of entries ($3 \cdot 2^{26} \approx 192\text{M}$ for double,
	$2^{27} \approx 128\text{M}$ for exact arithmetic)
	to keep the table cache-friendly.
	The length-decreasing nature of the recursion lets us pass $\ell(w)$
	as a parameter and decrement it at each step,
	avoiding the $O(n^2)$ cost of recomputing length from scratch.

	\item \textbf{BFS sort-reduce evaluation.}
	We evaluate the descent recurrence level-by-level
	using a breadth-first search (BFS) sort-reduce algorithm.
	Starting from the target~$w$ at level~$\ell(w)$,
	we process all permutations at a given length simultaneously.
	For each permutation~$v$ at the current level and each descent $i \in \Des(v)$,
	we emit the child $v \cdot s_i$ (at length $\ell(v)-1$)
	with its weighted contribution $\frac{i}{\ell(v)} \cdot \Upsilon_v$.
	Distinct parents may produce the same child.
	We \emph{sort} the children by their packed key
	and \emph{reduce} by summing contributions for each distinct key,
	yielding $\Upsilon$ for every permutation at the new level.
	The previous level is then discarded.

	\item \textbf{Arithmetic precision.}
	Formula \eqref{eq:descent} involves division by $\ell(w)$,
	requiring either rational or floating-point arithmetic.
	We provide two implementations:
	\begin{enumerate}[(a)]
		\item \emph{Double-precision arithmetic} (64-bit): Provides approximately
		15 significant digits, which becomes imprecise for large values of $\Upsilon_w$.
		\item \emph{Rational arithmetic} (exact): Uses
		a custom implementation of rational numbers with
		128-bit integers for
		numerator and denominator and GCD reduction after each operation.
		Produces exact integer results (when they fit into 128 bits) at slower speed.
	\end{enumerate}
	Since $\Upsilon_w$ is always an integer (counting reduced pipe dreams),
	the rational implementation serves as ground truth for validation.
\end{enumerate}

\subsection{Transition formula}
\label{sec:trans}

The transition formula of Lascoux-Sch\"utzenberger \cite{LascouxSchutzenberger1985LR} (and its refinement by Fan-Guo-Sun \cite{FanGuoSun2018Bumpless}) 
can be specialized to $\Upsilon_w$ as follows.
Given $w = w_1\,w_2\,\cdots w_n$, let $r$ be the largest index such that
$w_r$ appears as the ``$3$'' in a $132$ pattern, i.e.,
there exist $i<r<s$ with $w_i<w_s<w_r$.
If no such index exists, then $w$ is dominant and $\Upsilon_w=1$
\cite[(4.7)]{MacdonaldSchubertBook}.
Otherwise, we have the following recurrence:
\begin{theorem}[Transition formula {\cite{LascouxSchutzenberger1985LR}}]
\label{thm:trans}
For non-dominant $w \in S_n$, let $r$ be as above, and let $s>r$ be
the largest index such that $w_s<w_r$ and there exists $i<r$ with
$w_i<w_s$. Then
\begin{equation}
\label{eq:transition}
\Upsilon_w = \Upsilon_{v} + \sum_{i<r\,:\,\ell(v\cdot(i,r))=\ell(w)} \Upsilon_{v\cdot(i,r)},
\end{equation}
where $v=w\cdot(r,s)$.
\end{theorem}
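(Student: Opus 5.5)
The plan is to derive \eqref{eq:transition} by specializing the polynomial transition formula of Lascoux--Sch\"utzenberger \cite{LascouxSchutzenberger1985LR} at $x=1^n$. That identity reads as follows. Let $(r,s)$ be the lexicographically last inversion of $w$: $r$ is the largest position with $w_r>w_{r+1}$ (the last descent), and $s>r$ is the largest position with $w_s<w_r$. Set $v=w\cdot(r,s)$, so that $\ell(v)=\ell(w)-1$. Then
\[
\mathfrak{S}_w=x_r\,\mathfrak{S}_v+\sum_{i<r\,:\,\ell(v\cdot(i,r))=\ell(w)}\mathfrak{S}_{v\cdot(i,r)}.
\]
Putting $x_j=1$ for all $j$ gives \eqref{eq:transition} for this choice of $(r,s)$. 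In this generic form the recursion may bottom out at $w=1\,n\,n{-}1\cdots$ type terms with empty sums (the "$v\cdot(i,r)$ does not exist" case). The standard remedy is to embed $w\hookrightarrow 1\times w\in S_{n+1}$, which preserves $\mathfrak{S}_w$ and hence $\Upsilon_w$.

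The statement, however, uses a different pair $(r,s)$, defined through $132$-patterns, and this is the refinement of Fan--Guo--Sun \cite{FanGuoSun2018Bumpless}. So the key step is to show that for this $(r,s)$ the transition identity still holds, at least after specializing. I would proceed in three steps.

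\begin{enumerate}[(1)]
\item Show that all entries of $w$ after position $r$ (apart from the structure involving $s$) form a dominant "tail". Concretely, no position $>r$ is the $3$ of a $132$ pattern. Hence $w_{r+1},\dots,w_n$ together with the small values avoid creating pattern obstructions.
\item Recall that the transition identity holds for any pair $(r,s)$ such that $w\cdot(r,s)$ covers down by one in Bruhat order (i.e.\ $\ell$ drops by exactly $1$) and $x_r$ factors out. The polynomial version is the Monk-rule identity
\[
(x_r-y_{\cdot})\,\mathfrak{S}_v=\sum_{i<r}\mathfrak{S}_{v t_{ir}}-\sum_{j>r}\mathfrak{S}_{v t_{rj}}\,,
\]
where each sum runs over covers.
\item Use the maximality of $s$ and the choice of $r$ to show that the second sum reduces to the single term $\mathfrak{S}_w$. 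That is, $s$ is the only $j>r$ with $v\,t_{rj}\gtrdot v$, since any other such $j$ would produce a $132$ pattern with its $3$ at a position $>r$, or contradict the maximality of $s$. Rearranging Monk's rule then gives the transition identity, and specializing $x=1$ gives \eqref{eq:transition}.
\end{enumerate}

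Alternatively, Fan--Guo--Sun prove the refinement directly by a bijection on bumpless pipe dreams. I would cite that if the Monk-rule bookkeeping becomes messy.

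The main obstacle is step (3): verifying that the cover condition $\ell(v\,t_{rj})=\ell(v)+1$ fails for all $j>r$ with $j\neq s$. The cover criterion says there is no $k$ strictly between $r$ and $j$ with $v_k$ between $v_r$ and $v_j$. Checking this uses the defining properties of $r$ and $s$, namely the existence of $i<r$ with $w_i<w_s$ and the maximality of both indices. This requires a careful case analysis on the relative position of $j$ versus $s$. A related point to check is that the first sum is nonempty precisely because such an $i$ exists, which is why the pattern-based choice avoids the degenerate empty-sum case.
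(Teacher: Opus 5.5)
Your main line is the route the paper itself indicates in its appendix: apply Monk's rule to $x_r\mathfrak{S}_v$, show that the sum over $j>r$ is the single term $\mathfrak{S}_w$, rearrange, and set $x=1^n$. The claim in your step (3) is true, but the Monk identity you display has its signs reversed. For single Schubert polynomials it is $x_r\mathfrak{S}_v=\sum_{j>r}\mathfrak{S}_{v t_{rj}}-\sum_{i<r}\mathfrak{S}_{v t_{ir}}$, with both sums over covers. Already for $v=e$, $r=1$ in $S_2$, your version gives $x_1=-\mathfrak{S}_{s_1}$. Taken literally, your rearrangement yields $\Upsilon_w=\sum_i\Upsilon_{v t_{ir}}-\Upsilon_v$, not the statement, so fix the sign.

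Three smaller points:
\begin{enumerate}
\item Step (2) overstates things. A transposition that lowers length by one does not by itself give a transition identity; what is needed is exactly the uniqueness in step (3).
\item The $y$-correction in the double version is $y_{v(r)}$; for single Schubert polynomials there is none.
\item The side remark that $w\mapsto 1\times w$ preserves $\mathfrak{S}_w$ is false. We have $1\times s_1=s_2$ and $\mathfrak{S}_{s_2}=x_1+x_2\neq x_1$, so $\Upsilon$ jumps from $1$ to $2$. The stability \eqref{eq:stability} is for appending a fixed point at the end. The remark plays no role in your argument, so simply drop it.
\end{enumerate}

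The case analysis you defer as the main obstacle takes only a few lines. Fix $i_0<r$ with $w_{i_0}<w_s$, and recall $v_r=w_s$, $v_s=w_r$.
\begin{enumerate}
\item For $r<k<s$ one has $w_k<w_s$; otherwise $(i_0,k,s)$ is a $132$ pattern with its $3$ at position $k>r$. This gives at once that $w=v\,t_{rs}\gtrdot v$, so $\ell(v)=\ell(w)-1$. It also shows that no $j\in(r,s)$ has $v_j>v_r$.
\item Take $j>s$ with $w_j>w_s$. If $w_j>w_r$, then $v_s=w_r$ lies strictly between $v_r$ and $v_j$, so $v\,t_{rj}$ is not a cover. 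If $w_j<w_r$, then $j$ satisfies the conditions defining $s$ (with the same $i_0$), contradicting the maximality of $s$.
\end{enumerate}
Hence $\mathfrak{S}_w=x_r\mathfrak{S}_v+\sum_{i<r}\mathfrak{S}_{v t_{ir}}$, where the sum runs over $i<r$ with $v\,t_{ir}\gtrdot v$, i.e.\ with $\ell(v\cdot(i,r))=\ell(w)$. Specializing gives \eqref{eq:transition}. Your observation that this sum is nonempty (take the largest $i<r$ with $w_i<w_s$) is correct, but it is not needed for the identity.
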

By the choice of $r$ and $s$, we have $\ell(v)=\ell(w)-1$.

\medskip
The implementation of \Cref{thm:trans} uses the same packed permutation
representation as in \Cref{sec:descent}, and is based on iterative
depth-first search (DFS) with memoization.
	\begin{enumerate}[{\bf 1.}]
		\item \textbf{Packed representation and memoization.}
		States are stored as 128-bit packed permutations (5 bits per entry, $n \le 25$)
		and memoized in hash tables keyed by the packed code.
		As in \Cref{sec:descent}, we first strip trailing fixed points of $w$,
		and use the same memoization hard caps.

	\item \textbf{Transition index computation and base case.}
	For each state, we compute $(r,s)$ directly on the packed representation.
	If no such pair exists (equivalently, the permutation is dominant), we apply
	the base case $\Upsilon_w=1$.

	\item \textbf{Child generation and depth-first search state.}
	For non-dominant $w$, we first form $v=w\cdot(r,s)$ and then generate all
	terms $v\cdot(i,r)$ satisfying
	$\ell(v\cdot(i,r))=\ell(w)$.
	We then follow one branch of this recursion tree to a
	base case before backtracking to process remaining children.
	The stack frame stores this child list explicitly; the first child has
	length $\ell-1$, while the remaining children stay at length $\ell$.
	Because of these same-level dependencies, this implementation uses
	depth-first search (DFS) rather than a level-by-level BFS variant.

	\item \textbf{Arithmetic precision.}
	We provide two precision versions: one with double precision, and another one with exact
	integer arithmetic based on GMP's \texttt{mpz\_class}.
	Since \eqref{eq:transition} uses only addition,
	no rational division is required.
\end{enumerate}

\subsection{Cotransition formula}
\label{sec:cotrans}

The cotransition formula of Knutson \cite{Knutson2019cotransition} provides
yet
another recurrence that uses only integer addition. Recall that
$w_0$ denotes the longest permutation.

	\begin{theorem}[Cotransition formula {\cite{Knutson2019cotransition}}]
		\label{thm:cotrans}
		For $w \in S_n$ with $w \ne w_0$, let us denote
		\begin{equation}
	\label{eq:cotrans_index}
		i\coloneqq \min\{j : j + w(j) \le n\}.
	\end{equation}
	Then we have:
		\begin{equation}
			\label{eq:cotrans}
			\Upsilon_w = \sum_{v \gtrdot w,\,  v(i) \ne w(i)} \Upsilon_v.
		\end{equation}
		Here $v \gtrdot w$ means that $v$ covers $w$ in Bruhat order
		(see \Cref{lemma:bruhat_cover} below).
		The base case is $\Upsilon_{w_0} = 1$.
	\end{theorem}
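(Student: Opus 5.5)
The plan is to deduce \Cref{thm:cotrans} from Knutson's cotransition identity for double Schubert polynomials in \cite{Knutson2019cotransition}. Both sides of \eqref{eq:cotrans} should be homogeneous of degree $\ell(w)+1$ in a double-variable version, after multiplying $\mathfrak{S}_w$ by a linear form. The identity I expect to use has the shape
\[
(x_i - y_{w(i)})\,\mathfrak{S}_w(x;y) \;=\; \sum_{v\gtrdot w,\; v(i)\neq w(i)} \mathfrak{S}_v(x;y),
\]
valid whenever the index $i$ satisfies a suitable hypothesis. The specialization is then formal. Set $y_j=0$ for all $j$, so that $\mathfrak{S}_v(x;0)=\mathfrak{S}_v(x)$ by the standard property of double Schubert polynomials recalled in Appendix~\ref{app:schubert_background}. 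Then set $x_1=\cdots=x_n=1$. The linear factor on the left becomes $x_i=1$, and we obtain exactly \eqref{eq:cotrans}.

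For the base case, $\Upsilon_{w_0}=1$ follows from $\mathfrak{S}_{w_0}=x_1^{n-1}x_2^{n-2}\cdots x_{n-1}$. It also follows from the observation that $w_0$ has a unique reduced (bumpless) pipe dream. The recursion terminates because every $v\gtrdot w$ has $\ell(v)=\ell(w)+1\le \ell(w_0)$. The set on the right of \eqref{eq:cotrans} is nonempty for $w\ne w_0$, which one checks from the choice of $i$.

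The key steps, in order, are as follows.
\begin{enumerate}[(1)]
\item State Knutson's identity precisely, including his hypothesis on the row $i$.
\item Check that the index $i=\min\{j: j+w(j)\le n\}$ from \eqref{eq:cotrans_index} satisfies that hypothesis. For $j<i$ we have $j+w(j)>n$, and row $i$ is the first row whose entry lies weakly above the antidiagonal. This is the combinatorial condition that the linear form $x_i-y_{w(i)}$ vanishes on the corresponding hyperplane section of the matrix Schubert variety of $w$. By Knutson's argument, that section is reduced and decomposes into the matrix Schubert varieties of the covers $v\gtrdot w$ with $v(i)\ne w(i)$.
\item Translate between conventions: rows versus columns, $w$ versus $w^{-1}$ or $w_0w$, and which factor $x_i-y_{w(i)}$ versus $y_{w(i)}-x_i$ appears.
\item Specialize as above.
\end{enumerate}

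I expect the main obstacle to be steps (2)--(3): matching Knutson's indexing conventions and his choice of the distinguished box with the normalization $i=\min\{j: j+w(j)\le n\}$ used here. A wrong convention would either change the set of covers $v$ or introduce a sign.

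To guard against this, I would first test \eqref{eq:cotrans} by hand on $S_3$. Take $w=e$: then $i=1$, and the covers of $e$ with $v(1)\ne 1$ are only $s_1$, giving $\Upsilon_e=\Upsilon_{s_1}=1$. Take $w=s_2=132$: then $i=1$ and $\Upsilon_{132}=\Upsilon_{312}+\Upsilon_{231}=2$, which is correct. These checks would pin down the convention. If the literature statement proves awkward to match, the fallback is a direct geometric argument. Intersect the matrix Schubert variety $\overline{X}_w$ with the hyperplane $\{z_{i,w(i)}=0\}$, using the chosen $i$ to guarantee that this coordinate is not identically zero on $\overline{X}_w$ and that the intersection is reduced. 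Then read off the components as $\overline{X}_v$ for the covers $v$, and take multidegrees.
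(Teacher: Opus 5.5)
Your plan is correct. Its main line is in substance what the paper does, since the theorem is stated by citation: quote Knutson's equivariant identity $(x_i-y_{w(i)})\mathfrak{S}_w(x;y)=\sum_{v\gtrdot w,\,v(i)\ne w(i)}\mathfrak{S}_v(x;y)$ and set $y=0$, $x=1$. The paper's only in-house justification, however, is the remark in the appendix that cotransition is a special case of Monk's formula, taken at $r=i$, in the form
\[
x_r\mathfrak{S}_w=-\sum_{a<r}\mathfrak{S}_{w\cdot(a,r)}+\sum_{b>r}\mathfrak{S}_{w\cdot(r,b)}
\]
(length-additive terms only).

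That route is purely algebraic, and it removes both obstacles you anticipate. There are no geometric conventions to match. There is no reducedness of a hyperplane section to prove, because Monk's formula already has all multiplicities equal to $1$. What it needs are exactly the two combinatorial facts your step (2) leaves as ``check'':
\begin{enumerate}[(a)]
\item For $a<i$ one has $w(a)\ge n+1-a>n-i\ge w(i)$. So there is no cover $w\cdot(a,i)$, and the negative sum is empty. Geometrically, no dot of $w$ lies northwest of $(i,w(i))$. This is what makes $z_{i,w(i)}$ a $B_-\times B_+$ semi-invariant on $\overline{X}_w$, so the section is a union of orbit closures.
\item Exactly $n-w(i)\ge i$ values exceed $w(i)$, while only $i-1$ positions precede $i$. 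So some $k>i$ has $w(k)>w(i)$. This excludes $w\cdot(i,n+1)\in S_{n+1}$, the only possible term of the positive sum outside $S_n$. Geometrically, no component of the section is a rank-deficient orbit closure.
\end{enumerate}

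Your geometric fallback needs these same two facts plus a genuine reducedness argument, which is the nontrivial content of Knutson's paper. In exchange, it gives the equivariant identity and the geometry behind it.

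Two small corrections. First, $i+w(i)\le n$ places the dot strictly, not weakly, above the antidiagonal. Second, $x_i-y_{w(i)}$ is the equivariant class of the hyperplane $\{z_{i,w(i)}=0\}$, not a form that ``vanishes on the section.''
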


\begin{remark}
	The cotransition formula \eqref{eq:cotrans} proceeds
	\emph{toward} the longest permutation $w_0$, increasing the length
	at each step.
	Like the transition formula, \eqref{eq:cotrans} also
	involves only addition, and thus can be implemented using exact integer arithmetic.
\end{remark}

\begin{remark}
	From \eqref{eq:cotrans} it is not
	hard to see that there exists
	$w\in S_n$ with the maximal $\Upsilon_w$
	such that 
	$w(1)=1$.
\end{remark}

The following well-known characterization of the Bruhat order is useful
for implementation:
\begin{lemma}[{\cite[Lemma~2.1.4]{BjornerBrenti2005}}]
	\label{lemma:bruhat_cover}
	A permutation $v$ covers $w$ in Bruhat order if and only if
	$v = w \cdot (a, b)$ for some $a < b$ such that
	$w(a)<w(b)$, and there is no $k$ with $a < k < b$ and $w(a) < w(k) < w(b)$.
\end{lemma}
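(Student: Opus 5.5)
We need to prove \Cref{lemma:bruhat_cover}: $v$ covers $w$ in Bruhat order iff $v=w\cdot(a,b)$ with $a<b$, $w(a)<w(b)$, and no $k$ with $a<k<b$ and $w(a)<w(k)<w(b)$.

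The plan rests on two standard facts about the Bruhat order on $S_n$. First, Bruhat order is graded by the length $\ell$. Second, $u<v$ holds exactly when $v$ is obtained from $u$ by a chain of right multiplications by transpositions, each of which increases length. So $v\gtrdot w$ means precisely that $v=w\cdot t_{a,b}$ for some transposition with $\ell(v)=\ell(w)+1$. The lemma then reduces to a length computation for $w\cdot(a,b)$.

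The key step is the following formula. Let $a<b$ and $w(a)<w(b)$, and set
\[
m=\#\{k : a<k<b,\ w(a)<w(k)<w(b)\}.
\]
Then
\[
\ell(w\cdot(a,b))=\ell(w)+1+2m.
\]
To prove it, compare the inversions of $w$ and $w'=w\cdot(a,b)$ pair by pair.
\begin{itemize}
\item Pairs not involving positions $a$ or $b$ are unchanged.
\item Pairs involving $a$ or $b$ together with a position $k<a$ or $k>b$ have their inversion count preserved. This is because the multiset $\{w(a),w(b)\}$ at the positions $\{a,b\}$ is the same, and such a $k$ sits on the same side of both $a$ and $b$.
\item The pair $(a,b)$ itself gains one inversion.
\item For $a<k<b$, split according to where $w(k)$ lies.
 \begin{itemize}
 \item If $w(k)<w(a)$ or $w(k)>w(b)$, the two pairs $(a,k)$ and $(k,b)$ contribute the same total before and after.
 \item If $w(a)<w(k)<w(b)$, both $(a,k)$ and $(k,b)$ become inversions, adding $2$.
 \end{itemize}
\end{itemize}
The same computation with $w(a)>w(b)$ shows that length decreases, so a length-increasing transposition $t_{a,b}$ with $a<b$ must have $w(a)<w(b)$.

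To conclude, combine the formula with the chain description.

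\emph{($\Leftarrow$)} If the condition holds, then $m=0$, so $\ell(v)=\ell(w)+1$ and $w<v$. Since the order is graded, nothing lies strictly between them, and $v$ covers $w$.

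\emph{($\Rightarrow$)} If $v\gtrdot w$, then $v=w\cdot(a,b)$ with $\ell(v)=\ell(w)+1$. The formula forces $w(a)<w(b)$ and $m=0$, which is exactly the stated condition.

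The main obstacle is justifying the two inputs used at the start: the chain characterization of Bruhat order and its gradedness by $\ell$. If only the tableau criterion (comparing sorted initial segments) is taken as the definition, these are nontrivial. Since the lemma is cited from \cite{BjornerBrenti2005}, I would take them as standard, and the rest is the inversion count above.
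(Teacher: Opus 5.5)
Your proof is correct. The paper gives no argument of its own and simply cites \cite[Lemma~2.1.4]{BjornerBrenti2005}; your inversion count $\ell(w\cdot(a,b))=\ell(w)+1+2m$, combined with the chain definition of Bruhat order, is the standard proof of that lemma.

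The gradedness you flag as the main obstacle is in fact avoidable. The direction ($\Leftarrow$) only uses that $\ell$ strictly increases along Bruhat relations. For ($\Rightarrow$), suppose some $k$ with $a<k<b$ and $w(a)<w(k)<w(b)$ existed. Then $w<w\cdot(a,k)<v$ explicitly: from $w\cdot(a,k)$ you reach $v$ by the length-increasing swaps of positions $a,b$ and then $k,b$. This contradicts the cover.
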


Let us discuss the implementation details of the cotransition formula of \Cref{thm:cotrans}.
\begin{enumerate}[{\bf 1.}]
	\item \textbf{Permutation encoding.}
	As in \Cref{sec:descent,sec:trans}, we first strip trailing fixed points of $w$
	before evaluation.
	Permutations are then handled in the same packed form,
	64-bit for $n \le 16$
	and 128-bit for $n \le 25$.

	\item \textbf{BFS sort-reduce evaluation.}
	The cotransition evaluator runs level-by-level using
	breadth-first search (BFS). 
	At each level, for each permutation~$w$ on the current frontier
	with value~$\Upsilon_w$,
	we enumerate all Bruhat covers $v \gtrdot w$ satisfying $v(i) \ne w(i)$
	and emit the pair $(v, \Upsilon_w)$.
	Since distinct permutations at the same level may share a common cover~$v$,
	we apply sort-reduce as in \Cref{sec:descent}:
	sort emitted pairs by packed key and sum the values for each distinct~$v$.
	This computes $\Upsilon_w = \sum_v \Upsilon_v$ per \eqref{eq:cotrans}
	for all permutations at the new level.
	This collapses repeated states and keeps memory usage bounded.\footnote{An older implementation used DFS with memoization but it turned
	out to be much slower, as reported in \Cref{rmk:long_computation} below.}

	\item \textbf{Bruhat cover enumeration.}
	For each permutation $w$, we enumerate all Bruhat covers $v \gtrdot w$
	as in \Cref{lemma:bruhat_cover}.
	We filter to those covers where $v(i) \ne w(i)$ at the cotransition index $i$,
	see \eqref{eq:cotrans_index}.
	We store the cover pairs in a fixed-size array (not heap-allocated),
	eliminating memory allocation overhead in the inner loop.

	\item \textbf{Arithmetic precision.}
	Since formula \eqref{eq:cotrans} involves only addition,
	exact integer arithmetic is natural. We provide two implementations:
	\begin{enumerate}[(a)]
		\item \emph{Exact arithmetic} uses the GMP library (\texttt{mpz\_class})
		to compute exact integer values regardless of magnitude.
		This is essential for large $n$, where $\Upsilon_w$ can exceed 50 digits.
		\item \emph{Double-precision} (64-bit) is typically faster,
		but may become imprecise for large $\Upsilon_w$.
	\end{enumerate}
\end{enumerate}

\begin{remark}
   A folklore rule of thumb holds that the transition formula is the most efficient means of computing Schubert polynomials. (See discussion and debate in \cite[\S1.2]{monical2022reduced}.)  As we will see in the next section, this expectation is close to true in our simplified setting of principal evaluations, especially as $n$ grows.  The similar cotransition formula performs equally well or better for smaller $n$.
\end{remark}

\section{Performance comparison}
\label{sec:comparison}

We present a comparison of the performance of the descent, transition, and cotransition formulas from \Cref{sec:computation}.  
Our comparisons are anchored by known closed formulas for {\em layered permutations} and, in particular, the 
permutations maximizing $\Upsilon_w$ over the layered ones
\cite{MoralesPakPanova2019} (which are \emph{not} the absolute maximizers over all permutations, 
see \Cref{prop:merzon_smirnov_disproved} for a counterexample).
Most computations were performed on a laptop with Apple M2 Pro chip and 16GB~RAM,
and some of the heavier computations (where indicated)
used a more powerful desktop computer with an AMD Ryzen 9 processor with
64GB~RAM.

\subsection{Layered permutations}
\label{sec:layered_benchmark}

For a composition $(b_1,\ldots,b_k)$ of $n$ we
recursively define the \emph{layered permutation}
$w(b_1,\ldots,b_k):=
\bigl(w(b_1,\ldots,b_{k-1}), n, n-1,\ldots,n-b_k+1\bigr)$, where $w(b) = \bigl(b,b-1,\ldots,1\bigr)$.
In other words, the layered permutation $w(b_1,\ldots,b_k)$
is obtained by splitting $1,\ldots,n$ into blocks of sizes $b_1,b_2,\ldots,b_k$
and reversing each block.

It is straightforward to compute the length of a layered permutation:
\begin{lemma}
	\label{lem:layered_length}
	The length of a layered permutation $w(b_1,\ldots,b_k)$ is
	equal to
	$\sum_{i=1}^{k} \binom{b_i}{2}$.
\end{lemma}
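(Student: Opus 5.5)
We count the inversions of $w=w(b_1,\ldots,b_k)$ directly, since $\ell(w)$ is the number of pairs of positions $p<q$ with $w(p)>w(q)$. Write $B_j=\{b_1+\cdots+b_{j-1}+1,\ldots,b_1+\cdots+b_j\}$ for the $j$-th block of positions. By the recursive definition, $w$ maps $B_j$ bijectively onto the same set of values $B_j$, in decreasing order. We split the pairs $p<q$ according to whether $p$ and $q$ lie in the same block or in different blocks.

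\emph{Different blocks.} Suppose $p\in B_j$ and $q\in B_{j'}$ with $j<j'$. Then $w(p)\in B_j$ and $w(q)\in B_{j'}$. Every element of $B_j$ is smaller than every element of $B_{j'}$, so $w(p)<w(q)$ and the pair is not an inversion.

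\emph{Same block.} Suppose $p<q$ both lie in $B_j$. Since $w$ reverses $B_j$, we have $w(p)>w(q)$, so every such pair is an inversion. There are exactly $\binom{b_j}{2}$ such pairs.

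Summing over $j$ gives $\ell(w)=\sum_{i=1}^k\binom{b_i}{2}$.

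The only point that needs any care is confirming that the recursive definition really sends position block $B_j$ onto value block $B_j$ in reverse order. This follows by induction on $k$: appending $n,n-1,\ldots,n-b_k+1$ places the top $b_k$ values, reversed, in the last $b_k$ positions, and leaves the earlier blocks unchanged. An equivalent viewpoint is that $w$ is a product of commuting longest elements of the parabolic subgroups $S_{b_1}\times\cdots\times S_{b_k}$, and lengths add across these disjoint factors.
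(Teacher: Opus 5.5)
Your proof is correct. The paper gives no argument beyond calling the computation ``straightforward.'' Your block-by-block inversion count is exactly the routine verification being alluded to: no inversions between different blocks, and all $\binom{b_j}{2}$ pairs inverted within block $j$.
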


From \cite{MoralesPakPanova2019}, we explicitly know the permutations on which
$\Upsilon_w$ achieves the maximum among the layered permutations.  Let $w^*(n)$ denote this permutation, i.e., $w^*(n)$ is the layered permutation maximizing $\Upsilon_w$ over all layered permutations in $S_n$.
\begin{lemma}
	\label{lem:layered_length_ratio}
    We have
	\begin{equation}
		\label{eq:layered_length_ratio}
		\lim_{n\to\infty} \frac{\ell(w^*(n))}{\binom{n}{2}}
		= \frac{1-\alpha}{1+\alpha} \approx 0.3955,
	\end{equation}
	where $\alpha \approx 0.4331818312$ is the constant
	from \cite{MoralesPakPanova2019}.
\end{lemma}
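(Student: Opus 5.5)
The plan is to combine \Cref{lem:layered_length} with the structural description of the optimal layered permutation from \cite{MoralesPakPanova2019}. Write $w^*(n)=w(b_1,\ldots,b_k)$. The input I need from \cite{MoralesPakPanova2019} is the asymptotic shape of the maximizing composition. The blocks, read from the last (largest) one backwards, are asymptotically geometric with ratio $\alpha$. That is, for each fixed $j\ge 0$,
\[
\frac{b_{k-j}}{n}\longrightarrow (1-\alpha)\alpha^{j}\qquad (n\to\infty),
\]
and these proportions sum to $1$. I will quote this as the main theorem of \cite{MoralesPakPanova2019} on optimal layered permutations, where $\alpha$ is characterized through the maximization of the product formula for $\Upsilon_{w(b_1,\ldots,b_k)}$.

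Granting this, \Cref{lem:layered_length} gives $\ell(w^*(n))/\binom n2=\sum_i b_i(b_i-1)/(n(n-1))$. Heuristically, this equals $\sum_{j\ge0}(1-\alpha)^2\alpha^{2j}+o(1)$, and the geometric series evaluates to
\[
\frac{(1-\alpha)^2}{1-\alpha^2}=\frac{1-\alpha}{1+\alpha}\approx 0.3955 \quad\text{for } \alpha\approx 0.4332.
\]

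To make the interchange of limit and sum rigorous, I will use a truncation argument. Fix $J$. The blocks $b_{k-j}$ with $j<J$ contribute
\[
\sum_{j<J}(1-\alpha)^2\alpha^{2j}+o(1)
\]
to the normalized sum, by the pointwise convergence above. The remaining blocks have total size
\[
m_J \coloneqq n-\sum_{j<J}b_{k-j}=\alpha^J n+o(n).
\]
Since $\sum b_i^2\le(\sum b_i)^2$, their contribution to $\sum_i b_i^2/n^2$ is at most $(m_J/n)^2\le \alpha^{2J}+o(1)$. Letting $n\to\infty$ and then $J\to\infty$ squeezes the ratio to $(1-\alpha)/(1+\alpha)$. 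The $-b_i$ terms in $b_i(b_i-1)$ contribute at most $n/(n(n-1))=O(1/n)$ in total, so they are negligible.

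The main obstacle is extracting exactly the needed convergence statement from \cite{MoralesPakPanova2019}, namely convergence of each fixed-index block proportion counted from the largest block. If that paper only states the result in the form of the limiting constant, then I would rederive the block proportions as follows. Take logarithms of the product formula for layered permutations, so that $\log\Upsilon$ becomes a Riemann sum in the block proportions. Its maximizer then satisfies a self-similar recursion: removing the last block of proportion $1-\alpha$ leaves an optimal layered configuration on the remaining $\alpha n$ entries. That recursion yields the geometric profile directly, and the rest of the argument above goes through unchanged.
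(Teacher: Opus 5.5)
Your proposal is correct and follows the same route as the paper: quote \cite[Theorem~1.1]{MoralesPakPanova2019} for the geometric block proportions ($b\sim\alpha^{j}(1-\alpha)n$ for the $j$-th largest block), then sum the squares using \Cref{lem:layered_length}. Your truncation argument, which bounds the tail blocks by $(m_J/n)^2\to\alpha^{2J}$, together with the $O(1/n)$ estimate for the $-b_i$ terms, rigorously justifies the interchange of limit and infinite sum that the paper's one-line computation leaves implicit.
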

\begin{proof}
	By \cite[Theorem~1.1]{MoralesPakPanova2019},
	the optimal block sizes satisfy
	$b_i \sim \alpha^{i-1}(1-\alpha)\hspace{1pt} n$ as $n\to\infty$.
	By \Cref{lem:layered_length},
	\[
		\frac{\ell(w^*(n))}{\binom{n}{2}}
		\sim \frac{\sum_i b_i^2}{n^2}
		= (1-\alpha)^2 \sum_{i=0}^{\infty} \alpha^{2i}
		= \frac{(1-\alpha)^2}{1-\alpha^2}
		= \frac{1-\alpha}{1+\alpha}. \qedhere
	\]
\end{proof}

As the first benchmark, we compute $\Upsilon_{w^*(n)}$ for the maximizing layered permutations, for $n$ up to $17$, using all three formulas
with double-precision and exact arithmetic implementations.
\Cref{tab:layered_benchmark} summarizes the timing results.

\begin{table}[htbp]
	\centering
	\caption{Timing comparison for maximal layered permutations from \cite{MoralesPakPanova2019}.
	Times are in seconds. The ``Layers'' column shows the block structure of an optimal layered
	permutation (for example, for $n=8$ the permutation is $w(1,2,5)=13287654$). Asterisks indicate results from the range where integer accuracy is lost. 
	The descent rational variant exceeded the 180s timeout at $n \ge 16$.
	The row $n=300$ shows the largest value computed in \cite{MoralesPakPanova2019} for comparison
	(no timing data).}
	\label{tab:layered_benchmark}
	\small
	\begin{tabular}{cccccccccc}
		\toprule
		$n$ & Layers & $\ell(w)$ & $\frac{\log_2 \Upsilon_w}{n^2}$ &
		\multicolumn{2}{c}{Descent} & \multicolumn{2}{c}{Cotransition} & \multicolumn{2}{c}{Transition} \\
		\cmidrule(lr){5-6} \cmidrule(lr){7-8} \cmidrule(lr){9-10}
		& & & & double & rational & double & exact & double & exact \\
		\midrule
		8 & $(1,2,5)$ & 11 & 0.206 & 0.0001 & 0.0002 & 0.0001 & 0.0002 & 0.101 & 0.075 \\
		9 & $(1,2,6)$ & 16 & 0.214 & 0.0002 & 0.0017 & 0.0005 & 0.0005 & 0.085 & 0.088 \\
		10 & $(1,3,6)$ & 18 & 0.221 & 0.0006 & 0.0059 & 0.0009 & 0.0013 & 0.112 & 0.108 \\
		11 & $(1,3,7)$ & 24 & 0.227 & 0.0069 & 0.0320 & 0.0040 & 0.0084 & 0.094 & 0.104 \\
		12 & $(1,3,8)$ & 31 & 0.230 & 0.0373 & 0.2770 & 0.0168 & 0.0255 & 0.147 & 0.127 \\
		13 & $(1,1,3,8)$ & 31 & 0.234 & 0.0444 & 0.2823 & 0.0370 & 0.0582 & 0.185 & 0.139 \\
		14 & $(1,1,4,8)$ & 34 & 0.237 & 0.1491 & 1.1652 & 0.0784 & 0.1328 & 0.287 & 0.339 \\
		15 & $(1,1,4,9)$ & 42 & 0.242 & $1.4425^*$ & 14.5971 & 0.4032 & 0.7926 & 0.902 & 1.108 \\
		16 & $(1,1,4,10)$ & 51 & 0.244 & $20.5393^*$ & ${>}180$ & $2.4683^*$ & 5.6003 & $3.367^*$ & 4.316 \\
		17 & $(1,2,4,10)$ & 52 & 0.247 & $93.5795^*$ & ${>}180$ & $9.7029^*$ & 17.6501 & $9.779^*$ & 12.966 \\
		\midrule
		300 & $(1,2,6,14,32,74,171)$ & 17839 & 0.290 & --- & --- & --- & --- & --- & --- \\
		\bottomrule
	\end{tabular}
\end{table}

For layered permutations, cotransition is the fastest method overall in double precision.
The transition formula has larger constant overhead for small $n$
($\approx 0.1$ seconds already for $n=8$--$11$), but scales comparably to cotransition for larger $n$.
In exact arithmetic, transition is already faster than cotransition at $n=16,17$.
The descent formula remains competitive only at very small $n$ and then grows much more steeply;
its exact (rational) variant hits the 180s timeout by $n=16$.

\begin{remark}[Precision]
	\label{rmk:precision}
	At $n=15$, the maximal value of $\Upsilon_w$ is $\approx 2.3 \times 10^{16}$, which exceeds $2^{53} \approx 9 \times 10^{15}$,
	the threshold beyond which double-precision floating-point arithmetic loses integer precision.
	The descent formula with double precision yields $23{,}399{,}330{,}089{,}073{,}392$,
	while exact arithmetic gives $23{,}399{,}330{,}089{,}073{,}400$ --- a discrepancy of 8 in the units digit.
	The cotransition and transition formulas with double precision still produce the correct integer value at $n=15$.
	At $n \ge 16$, both cotransition and transition in double precision lose integer accuracy.
\end{remark}

\subsection{Random permutations from the RBPD sampler}
\label{sec:random_benchmark}

We also test the performance
on typical permutations obtained from the reduced
bumpless pipe dream (RBPD)
sampler described in \Cref{sec:mcmc_sampler} below.
We pick $n=15$ and sample 200 permutations.
\Cref{tab:random_benchmark} summarizes the timing.

\begin{table}[htbp]
	\centering
	\caption{Performance statistics for 200 random permutations from the RBPD sampler at $n=15$.}
	\label{tab:random_benchmark}
	\small
	\begin{tabular}{lcccccc}
		\toprule
		& \multicolumn{2}{c}{Descent} & \multicolumn{2}{c}{Cotransition} & \multicolumn{2}{c}{Transition} \\
		\cmidrule(lr){2-3} \cmidrule(lr){4-5} \cmidrule(lr){6-7}
		Statistic & double & rational & double & exact & double & exact \\
		\midrule
		Mean time (s) & 0.213 & 2.262 & 0.017 & 0.030 & 0.314 & 0.365 \\
		Median time (s) & 0.105 & 0.974 & 0.012 & 0.021 & 0.271 & 0.312 \\
		Max time (s) & 2.961 & 34.007 & 0.099 & 0.189 & 1.242 & 1.601 \\
		Total time (s) & 42.7 & 452.3 & 3.5 & 5.9 & 62.8 & 73.1 \\
		\bottomrule
	\end{tabular}
\end{table}

The cotransition formula is by far the fastest on these RBPD-typical permutations.

\begin{remark}
	\label{rmk:long_computation}
	For larger $n$, the computation time grows substantially.
	For example, the RBPD-typical permutation
	\begin{equation*}
		w = (1,3,11,2,8,4,13,7,21,9,6,19,17,18,5,16,15,14,20,10,12) \in S_{21}
	\end{equation*}
	with $\ell(w) = 67$ requires about 29 seconds using the BFS-mode exact-precision cotransition formula
	(the DFS mode with memoization takes several dozen minutes, and
	the descent formula does not terminate within hours).
	For
	a RBPD-typical permutation
	\begin{equation*}
		w=(2,1,4,13,10,3,7,19,6,23,22,12,17,21,5,16,20,11,18,14,15,9,8)\in S_{23}
	\end{equation*}
	with $\ell(w) = 98$, the BFS-mode cotransition formula completes in about 9 minutes,
	while the DFS mode did not finish within a week.
\end{remark}

\begin{remark}
	\label{rmk:descent_better}
	While the cotransition formula is much faster on RBPD-typical permutations,
	the descent formula wins decisively on permutations close to the identity.
	For example, consider
	\begin{equation*}
		w = (3,4,5,1,2,8,9,10,6,7,13,14,15,11,12,18,19,20,16,17,22,21,23,24,25) \in S_{25}
	\end{equation*}
	with $\ell(w) = 25$.
	After stripping trailing fixed points ($n$ reduced to $22$),
	the descent formula computes $\Upsilon_w$
	in about 0.005 seconds,
	while the cotransition formula takes about 0.03 seconds.
\end{remark}
%
%

\subsection{Search for maximum}
\label{sec:unified_max}

We search for
the permutation maximizing $\Upsilon_w$
over all $w \in S_n$ and not just over the layered ones.

In the \emph{full search for maximum} up to $n=13$, we start two parallel threads:
the descent thread computes $\Upsilon_w$ starting from the identity and proceeding upward in length,
while the cotransition thread starts from $w_0$ and proceeds downward.
Each thread uses the sort-reduce approach described in \Cref{sec:descent,sec:cotrans}.
This keeps only the current level in memory.
The two threads use \emph{dynamic meeting}: each checks whether it has crossed
the other thread's current level, and stops when the levels cross.
This approach balances the workload between threads,
as the descent and cotransition frontiers grow at different rates.
Both threads track the maximum $\Upsilon_w$ encountered at each level,
guaranteeing that the global maximum is found.
We run the full search up to $n=12$, where it completes in about 1~minute.
For $n=13$, this implementation runs out of the 16GB RAM available,
but was completed in about 15 minutes on a machine with 64GB RAM.
During some of the steps, it had exhausted all of the available RAM
and used the Linux swap space.

We have the following experimental result:

\begin{proposition}
\label{prop:full_search_13}
For $n \le 13$, the permutation maximizing $\Upsilon_w$ over all $w \in S_n$
is the same as the one maximizing over layered permutations,
as given in \cite{MoralesPakPanova2019}.
\end{proposition}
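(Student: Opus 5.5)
This is a finite, computer-assisted statement, so the plan is an exhaustive computation organized so that its correctness can be audited. For each $n\le 13$ we would compute $\Upsilon_w$ for every $w\in S_n$ with the two-thread meet-in-the-middle scheme described above. The descent thread runs the recurrence of \Cref{thm:descent} upward from $\Upsilon_e=1$, level by level in $\ell$. The cotransition thread runs \Cref{thm:cotrans} downward from $\Upsilon_{w_0}=1$. Each thread records, at each length level, the maximal value and its argmax set. Every permutation lies at some level, and the two frontiers together cover all levels $0,\dots,\binom n2$. So the global maximum over $S_n$ is the maximum of the per-level maxima.

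Separately, we would compute the layered maximum $\Upsilon_{w^*(n)}$. We would take the block structures from \cite{MoralesPakPanova2019}, listed in \Cref{tab:layered_benchmark}, and cross-check them by brute force over all $2^{n-1}$ compositions of $n$. Each layered value would be evaluated with the closed product formula and, independently, with transition and cotransition in exact arithmetic. We would then check two things:
\begin{enumerate}[(a)]
\item the global maximum equals $\Upsilon_{w^*(n)}$;
\item every permutation in the argmax set is layered. If ties appear, we would report the argmax up to the obvious symmetries, such as $w\mapsto w^{-1}$ and conjugation by $w_0$, which preserve $\Upsilon_w$ for the principal specialization.
\end{enumerate}

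To guard against arithmetic error, the descent thread would run in exact rational arithmetic and the cotransition thread in GMP integers. Doubles are unsafe here: the maxima at $n=13$ stay below $2^{53}$, but intermediate rationals need care. For validation we would also recompute, with the transition formula of \Cref{thm:trans}, the values of the top few hundred candidates at each $n$. In addition, we would check the identity $\sum_{w\in S_n}\Upsilon_w=\#\{\text{RBPDs of size }n\}$ against an independent count at small $n$. This sum is accumulated for free during the sweep.

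The main obstacle is the memory footprint at $n=13$. A single middle level of $S_{13}$ contains on the order of $10^8$ permutations, and sort-reduce emits several children per parent. To handle this, we would first choose the meeting level dynamically, as described above, so that each thread only traverses its cheaper half. Next, we would use 64-bit packed keys. If RAM is still insufficient, we would shard each level by a hash of the key onto disk and sort-reduce shard by shard. Because each level depends only on the previous one, sharding does not affect correctness.
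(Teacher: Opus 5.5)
Your plan is essentially the paper's own argument. It is an exhaustive sweep of $S_n$ for $n\le 13$, with a descent thread running upward from $e$ and a cotransition thread running downward from $w_0$, each doing level-by-level sort-reduce, tracking per-level maxima and meeting at a dynamically chosen level (the paper reports $n=13$ finishing in about 15 minutes on a 64GB machine); your extra cross-checks and disk sharding are sensible additions, not a different route.

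One side remark is wrong: conjugation by $w_0$ does not preserve $\Upsilon_w$ (e.g.\ $\Upsilon_{s_1}=1$ while $\Upsilon_{w_0 s_1 w_0}=\Upsilon_{s_{n-1}}=n-1$), and only $w\mapsto w^{-1}$ does. This is harmless, since you use it only to present ties and your checks (a) and (b) run on the full argmax set.
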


The prior state of the art was exhaustive verification for $n \le 10$,
as reported in \cite{merzon2016determinantal}.
The cases $n = 11, 12$ were subsequently verified and reported in \cite{OEIS} (sequence A284661),
and the present work provides the first published record of exhaustive search
through $n = 13$.
For $14 \le n \le 16$, exhaustive search is infeasible,
but no counterexample was found within Cayley distance~$4$
of the optimal layered permutation.
At $n = 17$, however, a permutation at Cayley distance~$1$
from the optimal layered permutation exceeds the layered maximum
(\Cref{prop:merzon_smirnov_disproved}),
along with similar counterexamples for $18\leq n\leq 20$
(\Cref{sec:merzon_smirnov_further}).

\section{Sampling: failure of CFTP with local flips}
\label{sec:cftp_failure}

In this section, we analyze the state space of reduced bumpless pipe dreams
(RBPDs) under local tile moves. A natural approach
for exact sampling of uniformly random RBPDs
is to embed them within the larger space of alternating sign matrices (ASMs)
and use Coupling From The Past (CFTP)
\cite{ProppWilsonCP}. We explain why this naive approach fails.

\subsection{Rothe diagram and Rothe BPD}
\label{sec:Rothe}

The \emph{Rothe diagram} of $w\in S_n$ is
\begin{equation}
	\label{eq:rothe}
	D(w) \coloneqq \bigl\{(i,j)\in [n]\times [n] : w(i) > j \text{ and } w^{-1}(j) > i\bigr\}.
\end{equation}
Equivalently, $D(w)$ is obtained by placing a dot at $(i,w(i))$ for each $i$
and removing all cells weakly right of or below each dot (in the same row and column, respectively). The number of cells is equal to the length: $|D(w)| = \ell(w)$. See \Cref{fig:rothe_diagram} for an example.

\begin{figure}[htpb]
\centering
	\begin{tikzpicture}[scale=0.8]
	\def\n{4}

	\draw[step=1,black] (0,0) grid (\n,\n);

	\fill[blue!15] (0,3) rectangle (1,4);
	\fill[blue!15] (1,3) rectangle (2,4);
	\fill[blue!15] (1,1) rectangle (2,2);

	\fill (2.5,3.5) circle (3.5pt); 
	\fill (0.5,2.5) circle (3.5pt); 
	\fill (3.5,1.5) circle (3.5pt); 
	\fill (1.5,0.5) circle (3.5pt); 
	\draw[thick] (.5,0)--++(0,2.5)--++(3.5,0);
	\draw[thick] (1.5,0)--++(0,.5)--++(2.5,0);
	\draw[thick] (2.5,0)--++(0,3.5)--++(1.5,0);
	\draw[thick] (3.5,0)--++(0,1.5)--++(.5,0);

	\foreach \j in {1,...,4} {
		\node[font=\small] at ({\j-0.5},-0.45) {\j};
	}
	\foreach \i in {1,...,4} {
		\node[font=\small] at (-0.45,{4-\i+0.5}) {\i};
	}
	\end{tikzpicture}
	\caption{Rothe diagram $D(w)$ for $w=(3,1,4,2)$.
	Dots mark positions $(i,w(i))$; shaded cells form $D(w)$.
	(Here $|D(w)| = 3 = \ell(w)$.)}
\label{fig:rothe_diagram}
\end{figure}
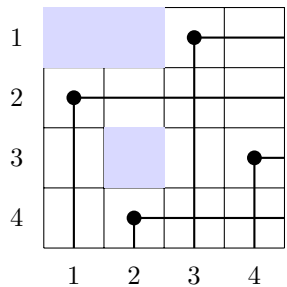

Interpreting the shaded cells of the diagram $D(w)$ as empty, and the rays deleting the boxes as pipes, we obtain a reduced bumpless pipe dream $\mathbf{b}_w$ for $w$, called the \emph{Rothe BPD}.  This is an RBPD canonically associated to $w$.

\subsection{Height functions and local flips}
\label{sec:local_flips}

Recall that a BPD uses the six tile types shown in
\Cref{fig:bpd_examples_intro}. Disregarding the reducedness constraint, the set
of all BPDs of size $n$ is in bijection with configurations of the six-vertex
model with domain wall boundary conditions, or equivalently, ASMs of size $n$
\cite{kuperberg1996another}, \cite{Bressoud1999}.
This space is equipped with a natural lattice structure governed by height
functions:

\begin{definition}[Height function and partial order]
\label{def:height}
For an $n \times n$ BPD $D$, the \emph{height function} $h_D(i,j) \in
\mathbb{Z}_{\ge 0}$ is defined on the dual-lattice vertices $0 \le i,j \le n$
so that six-vertex lines are level lines of $h_D$.

We define a partial order on all
BPDs by $D \le D'$ if and only if $h_D(v) \le h_{D'}(v)$ pointwise for all
vertices $v$ of the dual square lattice.
Under this order, ASMs form a distributive lattice with a unique
minimum $\mathbf{b}_{w_0}$ and
maximum $\mathbf{b}_{\mathrm{id}}$ (the Rothe BPDs for the
longest and the identity
permutation, respectively).
We refer to
\Cref{fig:height_function} for an illustration.
\end{definition}

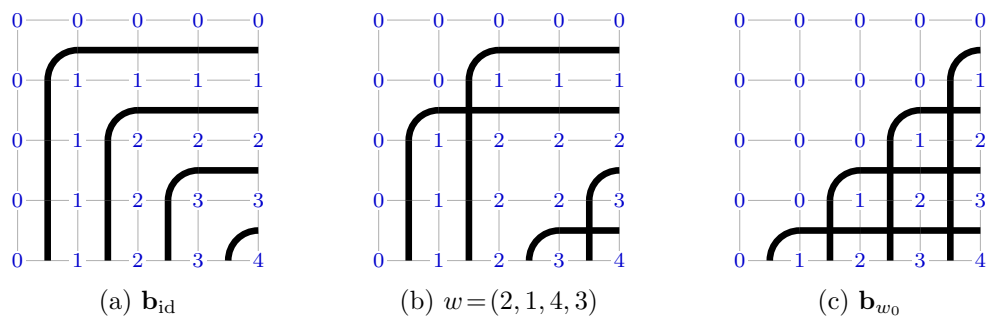
\begin{figure}[htbp]
\centering
\begin{tikzpicture}[scale=0.8]
  \begin{scope}[shift={(0,0)}]
  \draw[thin, gray!60] (0,0) grid (4,4);
  \brelbow{0}{3} \bhoriz{1}{3} \bhoriz{2}{3} \bhoriz{3}{3}
  \bvert{0}{2} \brelbow{1}{2} \bhoriz{2}{2} \bhoriz{3}{2}
  \bvert{0}{1} \bvert{1}{1} \brelbow{2}{1} \bhoriz{3}{1}
  \bvert{0}{0} \bvert{1}{0} \bvert{2}{0} \brelbow{3}{0}
  \foreach \x/\y/\h in {
    0/4/0, 1/4/0, 2/4/0, 3/4/0, 4/4/0,
    0/3/0, 1/3/1, 2/3/1, 3/3/1, 4/3/1,
    0/2/0, 1/2/1, 2/2/2, 3/2/2, 4/2/2,
    0/1/0, 1/1/1, 2/1/2, 3/1/3, 4/1/3,
    0/0/0, 1/0/1, 2/0/2, 3/0/3, 4/0/4
  } {
    \node[fill=white, inner sep=1pt, font=\scriptsize, text=blue!80!black] at (\x,\y) {\h};
  }
  \node[below, font=\small] at (2,-0.3) {(a) $\mathbf{b}_{\mathrm{id}}$};
  \end{scope}
  \begin{scope}[shift={(6,0)}]
  \draw[thin, gray!60] (0,0) grid (4,4);
  \brelbow{1}{3} \bhoriz{2}{3} \bhoriz{3}{3}
  \brelbow{0}{2} \bcross{1}{2} \bhoriz{2}{2} \bhoriz{3}{2}
  \bvert{0}{1} \bvert{1}{1} \brelbow{3}{1}
  \bvert{0}{0} \bvert{1}{0} \brelbow{2}{0} \bcross{3}{0}
  \foreach \x/\y/\h in {
    0/4/0, 1/4/0, 2/4/0, 3/4/0, 4/4/0,
    0/3/0, 1/3/0, 2/3/1, 3/3/1, 4/3/1,
    0/2/0, 1/2/1, 2/2/2, 3/2/2, 4/2/2,
    0/1/0, 1/1/1, 2/1/2, 3/1/2, 4/1/3,
    0/0/0, 1/0/1, 2/0/2, 3/0/3, 4/0/4
  } {
    \node[fill=white, inner sep=1pt, font=\scriptsize, text=blue!80!black] at (\x,\y) {\h};
  }
  \node[below, font=\small] at (2,-0.3) {(b) $w\!=\!(2,1,4,3)$};
  \end{scope}
  \begin{scope}[shift={(12,0)}]
  \draw[thin, gray!60] (0,0) grid (4,4);
  \brelbow{3}{3}
  \brelbow{2}{2} \bcross{3}{2}
  \brelbow{1}{1} \bcross{2}{1} \bcross{3}{1}
  \brelbow{0}{0} \bcross{1}{0} \bcross{2}{0} \bcross{3}{0}
  \foreach \x/\y/\h in {
    0/4/0, 1/4/0, 2/4/0, 3/4/0, 4/4/0,
    0/3/0, 1/3/0, 2/3/0, 3/3/0, 4/3/1,
    0/2/0, 1/2/0, 2/2/0, 3/2/1, 4/2/2,
    0/1/0, 1/1/0, 2/1/1, 3/1/2, 4/1/3,
    0/0/0, 1/0/1, 2/0/2, 3/0/3, 4/0/4
  } {
    \node[fill=white, inner sep=1pt, font=\scriptsize, text=blue!80!black] at (\x,\y) {\h};
  }
  \node[below, font=\small] at (2,-0.3) {(c) $\mathbf{b}_{w_0}$};
  \end{scope}
\end{tikzpicture}
\caption{Height functions on $n=4$ BPDs. Values of $h_D(i,j)$ appear at
dual-lattice vertices; six-vertex lines are the level lines of $h_D$.}
\label{fig:height_function}
\end{figure}

\begin{figure}[htbp]
\centering
\begin{tikzpicture}[scale=0.5]
  \node[left,font=\small] at (-0.3,11) {(a)};
  \node[left,font=\small] at (-0.3,6) {(b)};
  \node[left,font=\small] at (-0.3,1) {(c)};
  \begin{scope}[shift={(0,10)}]
    \draw[thin,gray!60] (0,0) grid (2,2);
    \brelbow{1}{1} \bhoriz{0}{0} \bjelbow{1}{0}
    \node at (2.5,1) {$\leftrightarrow$};
    \begin{scope}[shift={(3,0)}]
      \draw[thin,gray!60] (0,0) grid (2,2);
      \brelbow{0}{1} \bhoriz{1}{1} \bjelbow{0}{0}
    \end{scope}
  \end{scope}
  \begin{scope}[shift={(7,10)}]
    \draw[thin,gray!60] (0,0) grid (2,2);
    \brelbow{1}{1} \brelbow{0}{0} \bjelbow{1}{0}
    \node at (2.5,1) {$\leftrightarrow$};
    \begin{scope}[shift={(3,0)}]
      \draw[thin,gray!60] (0,0) grid (2,2);
      \brelbow{0}{1} \bhoriz{1}{1} \bvert{0}{0}
    \end{scope}
  \end{scope}
  \begin{scope}[shift={(14,10)}]
    \draw[thin,gray!60] (0,0) grid (2,2);
    \bvert{1}{1} \bhoriz{0}{0} \bjelbow{1}{0}
    \node at (2.5,1) {$\leftrightarrow$};
    \begin{scope}[shift={(3,0)}]
      \draw[thin,gray!60] (0,0) grid (2,2);
      \brelbow{0}{1} \bjelbow{1}{1} \bjelbow{0}{0}
    \end{scope}
  \end{scope}
  \begin{scope}[shift={(21,10)}]
    \draw[thin,gray!60] (0,0) grid (2,2);
    \bvert{1}{1} \brelbow{0}{0} \bjelbow{1}{0}
    \node at (2.5,1) {$\leftrightarrow$};
    \begin{scope}[shift={(3,0)}]
      \draw[thin,gray!60] (0,0) grid (2,2);
      \brelbow{0}{1} \bjelbow{1}{1} \bvert{0}{0}
    \end{scope}
  \end{scope}
  \begin{scope}[shift={(0,6.5)}]
    \draw[thin,gray!60] (0,0) grid (2,2);
    \bjelbow{0}{1} \brelbow{1}{1} \bhoriz{0}{0} \bjelbow{1}{0}
    \node at (2.5,1) {$\leftrightarrow$};
    \begin{scope}[shift={(3,0)}]
      \draw[thin,gray!60] (0,0) grid (2,2);
      \bcross{0}{1} \bhoriz{1}{1} \bjelbow{0}{0}
    \end{scope}
  \end{scope}
  \begin{scope}[shift={(7,6.5)}]
    \draw[thin,gray!60] (0,0) grid (2,2);
    \bjelbow{0}{1} \brelbow{1}{1} \brelbow{0}{0} \bjelbow{1}{0}
    \node at (2.5,1) {$\leftrightarrow$};
    \begin{scope}[shift={(3,0)}]
      \draw[thin,gray!60] (0,0) grid (2,2);
      \bcross{0}{1} \bhoriz{1}{1} \bvert{0}{0}
    \end{scope}
  \end{scope}
  \begin{scope}[shift={(14,6.5)}]
    \draw[thin,gray!60] (0,0) grid (2,2);
    \bjelbow{0}{1} \bvert{1}{1} \bhoriz{0}{0} \bjelbow{1}{0}
    \node at (2.5,1) {$\leftrightarrow$};
    \begin{scope}[shift={(3,0)}]
      \draw[thin,gray!60] (0,0) grid (2,2);
      \bcross{0}{1} \bjelbow{1}{1} \bjelbow{0}{0}
    \end{scope}
  \end{scope}
  \begin{scope}[shift={(21,6.5)}]
    \draw[thin,gray!60] (0,0) grid (2,2);
    \bjelbow{0}{1} \bvert{1}{1} \brelbow{0}{0} \bjelbow{1}{0}
    \node at (2.5,1) {$\leftrightarrow$};
    \begin{scope}[shift={(3,0)}]
      \draw[thin,gray!60] (0,0) grid (2,2);
      \bcross{0}{1} \bjelbow{1}{1} \bvert{0}{0}
    \end{scope}
  \end{scope}
  \begin{scope}[shift={(0,3.5)}]
    \draw[thin,gray!60] (0,0) grid (2,2);
    \brelbow{1}{1} \bhoriz{0}{0} \bcross{1}{0}
    \node at (2.5,1) {$\leftrightarrow$};
    \begin{scope}[shift={(3,0)}]
      \draw[thin,gray!60] (0,0) grid (2,2);
      \brelbow{0}{1} \bhoriz{1}{1} \bjelbow{0}{0} \brelbow{1}{0}
    \end{scope}
  \end{scope}
  \begin{scope}[shift={(7,3.5)}]
    \draw[thin,gray!60] (0,0) grid (2,2);
    \brelbow{1}{1} \brelbow{0}{0} \bcross{1}{0}
    \node at (2.5,1) {$\leftrightarrow$};
    \begin{scope}[shift={(3,0)}]
      \draw[thin,gray!60] (0,0) grid (2,2);
      \brelbow{0}{1} \bhoriz{1}{1} \bvert{0}{0} \brelbow{1}{0}
    \end{scope}
  \end{scope}
  \begin{scope}[shift={(14,3.5)}]
    \draw[thin,gray!60] (0,0) grid (2,2);
    \bvert{1}{1} \bhoriz{0}{0} \bcross{1}{0}
    \node at (2.5,1) {$\leftrightarrow$};
    \begin{scope}[shift={(3,0)}]
      \draw[thin,gray!60] (0,0) grid (2,2);
      \brelbow{0}{1} \bjelbow{1}{1} \bjelbow{0}{0} \brelbow{1}{0}
    \end{scope}
  \end{scope}
  \begin{scope}[shift={(21,3.5)}]
    \draw[thin,gray!60] (0,0) grid (2,2);
    \bvert{1}{1} \brelbow{0}{0} \bcross{1}{0}
    \node at (2.5,1) {$\leftrightarrow$};
    \begin{scope}[shift={(3,0)}]
      \draw[thin,gray!60] (0,0) grid (2,2);
      \brelbow{0}{1} \bjelbow{1}{1} \bvert{0}{0} \brelbow{1}{0}
    \end{scope}
  \end{scope}
  \begin{scope}[shift={(0,0)}]
    \draw[thin,gray!60] (0,0) grid (2,2);
    \bjelbow{0}{1} \brelbow{1}{1} \bhoriz{0}{0} \bcross{1}{0}
    \node at (2.5,1) {$\leftrightarrow$};
    \begin{scope}[shift={(3,0)}]
      \draw[thin,gray!60] (0,0) grid (2,2);
      \bcross{0}{1} \bhoriz{1}{1} \bjelbow{0}{0} \brelbow{1}{0}
    \end{scope}
  \end{scope}
  \begin{scope}[shift={(7,0)}]
    \draw[thin,gray!60] (0,0) grid (2,2);
    \bjelbow{0}{1} \brelbow{1}{1} \brelbow{0}{0} \bcross{1}{0}
    \node at (2.5,1) {$\leftrightarrow$};
    \begin{scope}[shift={(3,0)}]
      \draw[thin,gray!60] (0,0) grid (2,2);
      \bcross{0}{1} \bhoriz{1}{1} \bvert{0}{0} \brelbow{1}{0}
    \end{scope}
  \end{scope}
  \begin{scope}[shift={(14,0)}]
    \draw[thin,gray!60] (0,0) grid (2,2);
    \bjelbow{0}{1} \bvert{1}{1} \bhoriz{0}{0} \bcross{1}{0}
    \node at (2.5,1) {$\leftrightarrow$};
    \begin{scope}[shift={(3,0)}]
      \draw[thin,gray!60] (0,0) grid (2,2);
      \bcross{0}{1} \bjelbow{1}{1} \bjelbow{0}{0} \brelbow{1}{0}
    \end{scope}
  \end{scope}
  \begin{scope}[shift={(21,0)}]
    \draw[thin,gray!60] (0,0) grid (2,2);
    \bjelbow{0}{1} \bvert{1}{1} \brelbow{0}{0} \bcross{1}{0}
    \node at (2.5,1) {$\leftrightarrow$};
    \begin{scope}[shift={(3,0)}]
      \draw[thin,gray!60] (0,0) grid (2,2);
      \bcross{0}{1} \bjelbow{1}{1} \bvert{0}{0} \brelbow{1}{0}
    \end{scope}
  \end{scope}
\end{tikzpicture}
\caption{All flips on $2 \times 2$ windows of BPD tiles.}
\label{fig:local_flips}
\end{figure}
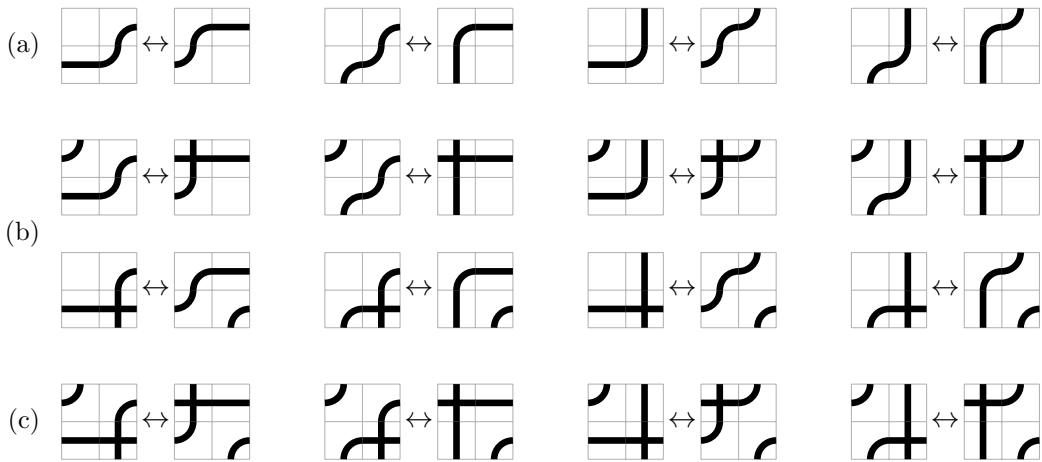

A natural dynamical system on the space of the ASMs
proceeds by \emph{flips}
on $2 \times 2$ windows of tiles:
whenever a $2 \times 2$ block admits an alternative valid tiling
with the same boundary conditions,
the two configurations can be exchanged.
Each flip shifts the height function at the
shared interior vertex by $\pm 1$, while preserving the height values everywhere else.
\Cref{fig:local_flips} displays all possible flips,
organized into three types based on the presence of cross tiles:
\begin{enumerate}[(a)]
	\item {\em Drips}: no cross tile is involved, and the boundary permutation is preserved.
	\item {\em Cross creation/annihilation}: a cross tile appears or disappears, either at the top-left corner (upper sub-row) or at the bottom-right corner (lower sub-row).
	\item {\em Cross relocation}: the cross tile shifts from one corner to the opposite.
 \end{enumerate}
Flips of types~(b) and~(c) alter the global pipe crossing pattern
and may change the boundary permutation~$w$.
In particular, a flip can turn a reduced BPD into a non-reduced one, or vice versa.

These flips define a Markov chain on the ASM lattice:
at each step, choose an interior dual-lattice vertex $(i,j)$
with $1 \le i,j \le n-1$ uniformly at random
and a direction (up or down) with equal probability,
then apply the flip to the four cells sharing vertex $(i,j)$
if one exists
(otherwise the state is unchanged). It is not hard to see that flips connect the state space: any two BPDs (or equivalently, any two ASMs) are related by a sequence of flips. 
This chain is ergodic and reversible with respect to the uniform measure
on all $n \times n$ ASMs.

Restricting to the subset of {\em reduced} BPDs, we conjecture that the same $2\times 2$ flips suffice to connect the state space:

\begin{conjecture}[Connectivity of the RBPD graph]
\label{conj:rbpd_connectivity}
For every $n$, the set of reduced bumpless pipe dreams of size~$n$
is connected under the $2 \times 2$ flips
(\Cref{fig:local_flips})
that preserve reducedness.
\end{conjecture}

We have verified \Cref{conj:rbpd_connectivity} computationally
for $n \le 8$ by
checking that the graph whose vertices are RBPDs and edges are flips that preserve reducedness is connected.
The remainder of this section assumes the conjecture holds.
We discuss obstructions to ``easy'' proofs of this
conjecture in \Cref{sec:traps} below.

\subsection{Failure of monotone CFTP}
\label{sec:cftp_obstruction}

We now explain why the standard Propp--Wilson
monotone Coupling From The Past (CFTP) algorithm \cite{ProppWilsonCP}
cannot be used to sample uniformly from RBPDs of size~$n$.
We begin with a concrete counterexample at $n=4$,
then state the general obstruction.

At $n=4$, the ASM lattice has $42$ elements.
Exactly $41$ are reduced BPDs and one is not.
The non-reduced element $\mathbf{b}^*$ is the unique $4 \times 4$ BPD
in which a single pair of pipes crosses twice
(\Cref{fig:nonreduced_bpd}, right).
Recall that the meet $A \wedge B$ in the ASM lattice
is the BPD whose height function is the pointwise minimum
$h_{A \wedge B}(v) = \min\bigl(h_A(v), h_B(v)\bigr)$.
One can check that among the $\binom{41}{2}$ pairs of RBPDs,
there exist nine pairs $\{A,B\}$ whose
meet $A \wedge B$
equals $\mathbf{b}^*$.
One such pair is shown in \Cref{fig:nonreduced_bpd}, left and center.
Thus, we have the following result:

\begin{proposition}[Failure of the sublattice property]
\label{prop:non_lattice}
For $n \ge 4$,
the set of reduced bumpless pipe dreams
does not form a sublattice of the ASM lattice.
\end{proposition}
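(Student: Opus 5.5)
The plan has two steps. First I treat $n=4$ explicitly, then I lift that example to every $n\ge 4$ by padding with the identity.

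\emph{Base case $n=4$.} I will exhibit two explicit reduced BPDs $A,B$ of size $4$ (those of \Cref{fig:nonreduced_bpd}) and compute their height functions $h_A,h_B$ on the $5\times 5$ dual lattice. I then take the pointwise minimum $h=\min(h_A,h_B)$. Since ASM height functions are closed under pointwise minimum (this is the distributive lattice structure in \Cref{def:height}), $h$ is the height function of a BPD $A\wedge B$. Reading off its tiles from the level lines, I check that it is $\mathbf{b}^*$, in which two pipes cross twice. So $A\wedge B$ is not reduced, and the reduced BPDs are not closed under meet when $n=4$. This is a finite verification, and I would present it through the explicit height tables.

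\emph{Extension to $n>4$.} For $n>4$ I embed the $4\times4$ picture into an $n\times n$ grid. Given a $4\times 4$ BPD $D$, let $\iota(D)$ be the $n\times n$ BPD that agrees with $D$ in the top-left $4\times4$ block and agrees with the Rothe BPD $\mathbf{b}_{\mathrm{id}}$ elsewhere. Concretely, pipe $k>4$ enters at the bottom of column $k$, goes straight up to row $k$, and turns right. Pipes $1,\dots,4$ keep their original routes inside the block and then continue horizontally to the right through columns $5,\dots,n$. Along the way they cross the vertical segments of pipes $5,\dots,n$: each pipe $j\le4$ crosses each pipe $k>4$ exactly once in cell $(j,k)$.

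I need three facts about $\iota$.

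\begin{enumerate}[(i)]
\item $\iota(D)$ is a valid BPD. This is a local tile check along the boundary of the block.
\item $\iota(D)$ is reduced if and only if $D$ is reduced. Pairs of pipes that are both $>4$ never cross, and pairs with one pipe $\le4$ and one pipe $>4$ cross exactly once. So the only possible double crossings are among pipes $1,\dots,4$, and these happen inside the block exactly as in $D$.
\item $\iota$ commutes with meets. The height function of $\iota(D)$ restricted to the dual vertices of the block equals $h_D$. Outside the block it is a fixed function independent of $D$: it is determined by the fixed tiles there and by the boundary values on the block's right and bottom edges, and those are fixed by the domain wall conditions. Hence $h_{\iota(A)}\wedge h_{\iota(B)}=h_{\iota(A\wedge B)}$.
\end{enumerate}

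It then follows that $\iota(A)$ and $\iota(B)$ are reduced, while their meet $\iota(\mathbf{b}^*)$ is not. This proves the proposition for all $n\ge4$.

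The main obstacle is fact (iii), together with the convention that the height function counts the pipes crossed along a path from the corner. I have to check carefully that the heights on the block boundary, for example $h(i,4)$ and $h(4,j)$, agree with the $4\times4$ boundary values for every $D$. If the orientation conventions make the block sit more naturally in another corner, I would instead place it bottom-right and pad with $w_0$-type crosses. The verification is analogous.
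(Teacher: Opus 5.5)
Your proposal is correct and is essentially the paper's argument. You verify $n=4$ with the explicit pair $A,B$ of \Cref{fig:nonreduced_bpd}, then embed that $4\times 4$ picture into the $n\times n$ grid padded by non-interacting pipes. The paper places the block in the bottom-right corner and you place it in the top-left; at the ASM level this is $I_{n-4}\oplus A$ versus $A\oplus I_{n-4}$. Your fact (iii) usefully spells out why the embedding commutes with meets, which the paper leaves implicit. With the paper's convention ($h=0$ on the north and west edges), your placement even gives $h_{\iota(D)}=h_D$ on the block with no shift, so your orientation worry resolves in your favor.

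One slip needs fixing. In your padding, pipe $k>4$ turns right at row $k\ge 5$ and never enters rows $1,\dots,4$. So the cells $(j,k)$ with $j\le 4<k$ are plain horizontal tiles, and mixed pairs cross zero times, not exactly once. Fact (ii) still holds, and more simply: $\iota(D)$ has exactly the crosses of $D$, and its boundary permutation is $w_D$ extended by fixed points.
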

\begin{proof}
The $n = 4$ case is verified directly as described above.
For $n \ge 5$, the same pair of RBPDs embeds into the bottom-right
$4 \times 4$ block of the $n \times n$ grid; the remaining
$n-4$ pipes simply travel up and then right without
interacting with the $4 \times 4$ block.
The meet of this embedded pair remains non-reduced.
\end{proof}

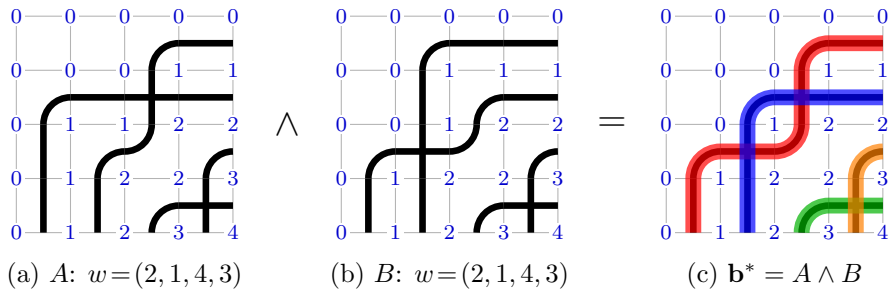
\begin{figure}[htbp]
\centering
\begin{tikzpicture}[scale=0.72]
  \begin{scope}[shift={(0,0)}]
  \draw[thin, gray!60] (0,0) grid (4,4);
  \brelbow{2}{3} \bhoriz{3}{3}
  \brelbow{0}{2} \bhoriz{1}{2} \bcross{2}{2} \bhoriz{3}{2}
  \bvert{0}{1} \brelbow{1}{1} \bjelbow{2}{1} \brelbow{3}{1}
  \bvert{0}{0} \bvert{1}{0} \brelbow{2}{0} \bcross{3}{0}
  \foreach \x/\y/\h in {
    0/4/0, 1/4/0, 2/4/0, 3/4/0, 4/4/0,
    0/3/0, 1/3/0, 2/3/0, 3/3/1, 4/3/1,
    0/2/0, 1/2/1, 2/2/1, 3/2/2, 4/2/2,
    0/1/0, 1/1/1, 2/1/2, 3/1/2, 4/1/3,
    0/0/0, 1/0/1, 2/0/2, 3/0/3, 4/0/4
  } {
    \node[fill=white, inner sep=1pt, font=\scriptsize, text=blue!80!black] at (\x,\y) {\h};
  }
  \node[below, font=\small] at (2,-0.35) {(a) $A$: $w\!=\!(2,1,4,3)$};
  \end{scope}
  \node[font=\Large] at (5.0, 2) {$\wedge$};
  %
  \begin{scope}[shift={(6,0)}]
  \draw[thin, gray!60] (0,0) grid (4,4);
  \brelbow{1}{3} \bhoriz{2}{3} \bhoriz{3}{3}
  \bvert{1}{2} \brelbow{2}{2} \bhoriz{3}{2}
  \brelbow{0}{1} \bcross{1}{1} \bjelbow{2}{1} \brelbow{3}{1}
  \bvert{0}{0} \bvert{1}{0} \brelbow{2}{0} \bcross{3}{0}
  \foreach \x/\y/\h in {
    0/4/0, 1/4/0, 2/4/0, 3/4/0, 4/4/0,
    0/3/0, 1/3/0, 2/3/1, 3/3/1, 4/3/1,
    0/2/0, 1/2/0, 2/2/1, 3/2/2, 4/2/2,
    0/1/0, 1/1/1, 2/1/2, 3/1/2, 4/1/3,
    0/0/0, 1/0/1, 2/0/2, 3/0/3, 4/0/4
  } {
    \node[fill=white, inner sep=1pt, font=\scriptsize, text=blue!80!black] at (\x,\y) {\h};
  }
  \node[below, font=\small] at (2,-0.35) {(b) $B$: $w\!=\!(2,1,4,3)$};
  \end{scope}
  \node[font=\Large] at (11.0, 2) {$=$};
  %
  \begin{scope}[shift={(12,0)}]
  \draw[thin, gray!60] (0,0) grid (4,4);
  \brelbow{2}{3} \bhoriz{3}{3}
  \brelbow{1}{2} \bcross{2}{2} \bhoriz{3}{2}
  \brelbow{0}{1} \bcross{1}{1} \bjelbow{2}{1} \brelbow{3}{1}
  \bvert{0}{0} \bvert{1}{0} \brelbow{2}{0} \bcross{3}{0}
  \draw[red, line width=6pt, opacity=0.7]
    (0.5,0) -- (0.5,1)
    to[out=90,in=180] (1,1.5)
    -- (2,1.5)
    to[out=0,in=270] (2.5,2)
    -- (2.5,3)
    to[out=90,in=180] (3,3.5)
    -- (4,3.5);
  \draw[blue, line width=6pt, opacity=0.7]
    (1.5,0) -- (1.5,2)
    to[out=90,in=180] (2,2.5)
    -- (4,2.5);
  \draw[green!70!black, line width=6pt, opacity=0.7]
    (2.5,0) to[out=90,in=180] (3,0.5)
    -- (4,0.5);
  \draw[orange, line width=6pt, opacity=0.7]
    (3.5,0) -- (3.5,1)
    to[out=90,in=180] (4,1.5);
  \foreach \x/\y/\h in {
    0/4/0, 1/4/0, 2/4/0, 3/4/0, 4/4/0,
    0/3/0, 1/3/0, 2/3/0, 3/3/1, 4/3/1,
    0/2/0, 1/2/0, 2/2/1, 3/2/2, 4/2/2,
    0/1/0, 1/1/1, 2/1/2, 3/1/2, 4/1/3,
    0/0/0, 1/0/1, 2/0/2, 3/0/3, 4/0/4
  } {
    \node[fill=white, inner sep=1pt, font=\scriptsize, text=blue!80!black] at (\x,\y) {\h};
  }
  \node[below, font=\small] at (2,-0.35) {(c) $\mathbf{b}^* = A \wedge B$};
  \end{scope}
\end{tikzpicture}
\caption{Failure of the sublattice property for RBPDs at $n=4$.}
\label{fig:nonreduced_bpd}
\end{figure}

The failure of the sublattice property
has a direct consequence for CFTP.
The Propp--Wilson algorithm \cite{ProppWilsonCP}
requires the random update map $\Phi$ to be \emph{monotone}:
if $X \le Y$ in the partial order, then $\Phi(X) \le \Phi(Y)$ with probability one.
When this holds, it suffices to track the two extremal chains
starting from the unique minimum and maximum elements of the state space,
$\mathbf{b}_{w_0}$ and $\mathbf{b}_{\mathrm{id}}$ (see \Cref{fig:height_function}).
The coalescence
of these two extremal chains (together with the time-doubling procedure of the CFTP algorithm)
guarantees that all intermediate chains
have also coalesced (the ``sandwiching'' property),
yielding an exact uniform sample.

To restrict the flip dynamics
on the ASM lattice
to RBPDs,
one must reject any flip that produces a non-reduced BPD.
One may consider two natural rejection schemes
which still deal with two extremal chains $X$ and $Y$ starting from $\mathbf{b}_{w_0}$ and $\mathbf{b}_{\mathrm{id}}$,
respectively:

\begin{enumerate}[{\bf 1.}]
\item \textbf{Internal (per-chain) rejection.}
Each chain independently checks whether
the proposed flip preserves reducedness,
and rejects it if not.
This scheme preserves the
marginal evolution of $X$ and $Y$ as the original flip dynamics on RBPDs,
but breaks monotonicity:
a shared flip may be accepted by one chain
(which remains reduced) and rejected by the other
(which would become non-reduced),
causing the ordering $X \le Y$ to be violated after the update.
See \Cref{fig:cftp_counterexample_n4} for an example.

\item \textbf{Coupled rejection.}
Reject the flip for
both
chains $X,Y$ whenever
it would make either chain non-reduced.
This preserves monotonicity,
but the transition probabilities of each chain
now depend on the state of the other chain.
The resulting coupling from the past process no longer targets
the uniform distribution on RBPDs.
\end{enumerate}
We see that neither scheme produces a valid monotone CFTP sampler.

\begin{figure}[htbp]
\centering
\begin{tikzpicture}[scale=0.72]
  \begin{scope}[shift={(7,6)}]
    \draw[thin, gray!60] (0,0) grid (4,4);
    \brelbow{2}{3} \bhoriz{3}{3}
    \brelbow{0}{2} \bhoriz{1}{2} \bcross{2}{2} \bhoriz{3}{2}
    \bvert{0}{1} \brelbow{1}{1} \bcross{2}{1} \bhoriz{3}{1}
    \bvert{0}{0} \bvert{1}{0} \bvert{2}{0} \brelbow{3}{0}
    \draw[orange!90!black, dashed, line width=1.2pt] (2,0) rectangle (4,2);
    \fill[blue!70!black] (3,1) circle (3pt);
    \node[font=\scriptsize, blue!70!black] at (4.7,1) {$h\!=\!3$};
    \node[below, font=\small] at (2,-0.35) {(b) $Y$: $w_Y\!=\!(3,1,2,4)$};
  \end{scope}

  \begin{scope}[shift={(0,6)}]
    \draw[thin, gray!60] (0,0) grid (4,4);
    \brelbow{2}{3} \bhoriz{3}{3}
    \brelbow{1}{2} \bcross{2}{2} \bhoriz{3}{2}
    \brelbow{0}{1} \bcross{1}{1} \bcross{2}{1} \bhoriz{3}{1}
    \bvert{0}{0} \bvert{1}{0} \bvert{2}{0} \brelbow{3}{0}
    \draw[orange!90!black, dashed, line width=1.2pt] (2,0) rectangle (4,2);
    \draw[red!70!black, line width=1.2pt, rounded corners=2pt] (1.02,1.02) rectangle (1.98,1.98);
    \fill[blue!70!black] (3,1) circle (3pt);
    \node[font=\scriptsize, blue!70!black] at (4.7,1) {$h\!=\!3$};
    \node[below, font=\small] at (2,-0.35) {(a) $X$: $w_X\!=\!(3,2,1,4)$};
  \end{scope}

  \begin{scope}[shift={(7,0)}]
    \draw[thin, gray!60] (0,0) grid (4,4);
    \brelbow{2}{3} \bhoriz{3}{3}
    \brelbow{0}{2} \bhoriz{1}{2} \bcross{2}{2} \bhoriz{3}{2}
    \bvert{0}{1} \brelbow{1}{1} \bjelbow{2}{1} \brelbow{3}{1}
    \bvert{0}{0} \bvert{1}{0} \brelbow{2}{0} \bcross{3}{0}
    \fill[blue!70!black] (3,1) circle (3pt);
    \node[font=\scriptsize, blue!70!black] at (4.7,1) {$h\!=\!2$};
    \node[below, font=\small] at (2,-0.35) {(d) $Y'$: accepted};
  \end{scope}

  \begin{scope}[shift={(0,0)}]
    \draw[thin, gray!60] (0,0) grid (4,4);
    \brelbow{2}{3} \bhoriz{3}{3}
    \brelbow{1}{2} \bcross{2}{2} \bhoriz{3}{2}
    \brelbow{0}{1} \bcross{1}{1} \bcross{2}{1} \bhoriz{3}{1}
    \bvert{0}{0} \bvert{1}{0} \bvert{2}{0} \brelbow{3}{0}
    \fill[blue!70!black] (3,1) circle (3pt);
    \node[font=\scriptsize, blue!70!black] at (4.7,1) {$h\!=\!3$};
    \node[below, font=\small] at (2,-0.35) {(c) $X'\!=\!X$: rejected};
  \end{scope}
\end{tikzpicture}
\caption{Monotonicity violation at $n=4$ under internal rejection.
States $X \le Y$ have identical $2\times2$ windows around $v=(3,3)$
(dashed box).
The flip is accepted by~$Y$
but rejected by~$X$ due to the extra cross outside the window (highlighted in $X$).
This yields $X' \nleq Y'$ since
$h_{X'}(3,3) = 3$ but  $h_{Y'}(3,3)= 2$.}
\label{fig:cftp_counterexample_n4}
\end{figure}
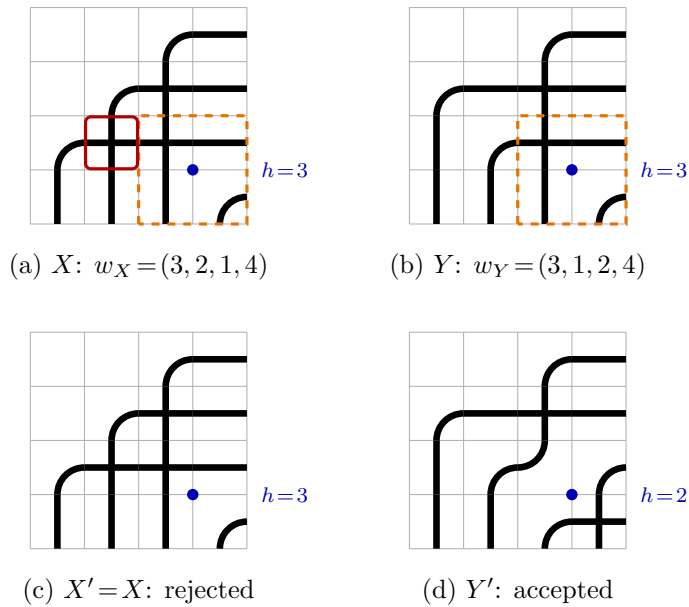

Because reducedness depends on the global pipe crossing pattern,
the height function at the flip site cannot detect whether
a distant cross will cause a double crossing.
The direct enumeration, presented in \Cref{tab:monotonicity},
shows that these monotonicity violations
appear as soon as $n = 4$ and grow with~$n$.

\begin{table}[htbp]
    \centering
    \caption{Monotonicity violations
		under internal rejection.
		For each ordered pair $X \le Y$ of RBPDs,
		all $2(n-1)^2$ flips are tested
		($(n-1)^2$ interior vertices $\times$ $2$ directions).
		A violation occurs when a shared flip is accepted by one chain
		but rejected by the other, breaking the order $X' \le Y'$.}
    \label{tab:monotonicity}
    \begin{tabular}{crrrc}
        \toprule
        $n$ & $|$RBPDs$|$ & Ordered pairs & Flip checks & Violations \\
        \midrule
        3 & 7 & 26 & 208 & \textbf{0} \\
        4 & 41 & 618 & 11{,}124 & \textbf{16} \\
        5 & 393 & 39{,}302 & 1{,}257{,}664 & \textbf{2{,}259} \\
        \bottomrule
    \end{tabular}
\end{table}

\subsection{False coalescence}
\label{sec:false_coalescence}

The internal rejection mechanism preserves the correct
stationary distribution but breaks monotonicity.
Here we directly test whether the ``naive''
version of the CFTP algorithm that tracks only the extremal chains
$X$ and $Y$
(started from $\mathbf{b}_{w_0}$ and $\mathbf{b}_{\mathrm{id}}$)
can still be used to sample from the uniform distribution on RBPDs.
Indeed, one might hope that the extremal chains
rarely cross, and their coalescence
may be close to the universal coalescence.

For each trial, we run the standard backward CFTP protocol
with time doubling \cite{ProppWilsonCP}:
for $T=1,2,4,8,\ldots$, we draw fresh random updates
for times $-2T,\ldots,-(T+1)$, place them before the
existing updates for $-T,\ldots,-1$, and apply the combined
sequence of Markov steps in the two extremal chains,
doubling $T$ until the extremal chains coalesce at time $0$.
We record the final update sequence.
We then replay \emph{the same sequence}
starting from \emph{every} RBPD in the state space,
and check whether all chains arrive at the same final state.
We say that \emph{false coalescence} occurs when
$X(0) = Y(0)$, but for some intermediate chain $Z$ started from a different RBPD, $Z(0) \ne X(0)$.
The simulation results (\Cref{tab:false_coalescence}) confirm that
this is not a rare event: at $n=5$,
over $18\%$ of CFTP terminations are false.

\begin{table}[htbp]
    \centering
    \caption{False coalescence rates.
		Each trial runs backward CFTP until the extremal chains coalesce,
		then replays from all starting states.
		A failure means the extremal chains agreed
		but at least one intermediate chain did not.}
    \label{tab:false_coalescence}
    \begin{tabular}{crrcc}
        \toprule
        $n$ & $|\mathrm{RBPDs}|$ & Trials & False coalescences & Rate \\
        \midrule
        3 & 7 & 1{,}000 & 0 & 0.0\% \\
        4 & 41 & 500{,}000 & 37{,}476 & \textbf{7.5\%} \\
        5 & 393 & 50{,}000 & 9{,}361 & \textbf{18.7\%} \\
        \bottomrule
    \end{tabular}
\end{table}

To confirm that this sampling algorithm produces a biased output distribution,
we compare the naive CFTP output at $n=4$
to the target distribution on permutations.
A uniform distribution on RBPDs induces a distribution
on permutations $w \in S_n$ with probability proportional
to $\Upsilon_w = \mathfrak{S}_w(1^n)$
(the number of reduced BPDs for~$w$).
Over $500{,}000$ naive CFTP samples,
the permutation frequencies
deviate significantly from this target:
a Pearson $\chi^2$ test yields $\chi^2 = 60.7$
with $23$ degrees of freedom ($p \approx 3 \times 10^{-5}$).
\Cref{tab:cftp_bias} shows the per-permutation breakdown,
grouped by $\Upsilon_w$.
The per-permutation relative deviations are around $2\%$,
but the bias is statistically significant.

\begin{table}[htbp]
    \centering
    \caption{Naive CFTP output distribution on permutations at $n=4$
		($500{,}000$ trials).
        Permutations are grouped by $\Upsilon_w$.
        Under an unbiased sampler, the ratios of observed/expected probability
				should be much closer to $1.00$.
				Expected counts are rounded from $500{,}000 \cdot \Upsilon_w / 41$.}
    \label{tab:cftp_bias}
    \begin{tabular}{clrrrr}
        \toprule
        $\Upsilon_w$ & $w$ & Expected & Observed & Obs/Exp & $(\mathrm{O{-}E})^2/\mathrm{E}$ \\
        \midrule
        5 & $(1,4,3,2)$ & 60{,}976 & 61{,}607 & 1.010 & 6.5 \\
        \midrule
        3 & $(1,2,4,3)$ & 36{,}585 & 36{,}093 & 0.987 & 6.6 \\
          & $(1,3,4,2)$ & 36{,}585 & 36{,}669 & 1.002 & 0.2 \\
          & $(1,4,2,3)$ & 36{,}585 & 36{,}848 & 1.007 & 1.9 \\
          & $(2,1,4,3)$ & 36{,}585 & 36{,}545 & 0.999 & 0.0 \\
        \midrule
        2 & $(1,3,2,4)$ & 24{,}390 & 23{,}924 & 0.981 & 8.9 \\
          & $(2,4,1,3)$ & 24{,}390 & 24{,}489 & 1.004 & 0.4 \\
          & $(2,4,3,1)$ & 24{,}390 & 24{,}536 & 1.006 & 0.9 \\
          & $(3,1,4,2)$ & 24{,}390 & 24{,}669 & 1.011 & 3.2 \\
          & $(4,1,3,2)$ & 24{,}390 & 24{,}546 & 1.006 & 1.0 \\
        \midrule
        1 & \multicolumn{5}{l}{14 permutations (expected 12{,}195 each):} \\
          & max obs/exp & 12{,}195 & 12{,}447 & 1.021 & 5.2 \\
          & min obs/exp & 12{,}195 & 11{,}905 & 0.976 & 6.9 \\
        \bottomrule
    \end{tabular}
\end{table}

We therefore abandon exact sampling via CFTP and instead,
in \Cref{sec:mcmc_sampler} below,
develop a Markov chain Monte Carlo (MCMC)
sampler for uniformly random RBPDs.

\section{Sampling: MCMC for reduced bumpless pipe dreams}
\label{sec:mcmc_sampler}

We turn to a Markov Chain Monte Carlo (MCMC) approach
for sampling uniformly random RBPDs:
a random walk on the state space of RBPDs whose stationary distribution
is the uniform measure.
In \Cref{sec:traps}, we further discuss
\Cref{conj:rbpd_connectivity}
(connectivity of the state space under the 
$2 \times 2$ flips that preserve reducedness),
and present examples of ``traps'' that require non-monotone paths to escape.
In \Cref{sec:droops_connectivity}, we introduce droops and undroops ---
rectangular moves from \cite{LamLeeShimozono2021BPD} that
bypass the traps --- and prove that
the combined move set makes the state space connected,
establishing the correctness of the MCMC sampler.

\subsection{Traps for local flips}
\label{sec:traps}

We verified \Cref{conj:rbpd_connectivity}
for $n \le 8$ by direct exploration of the state space.
This exploration also revealed the presence of ``traps'' in the state space
that require non-monotone paths to escape.

\begin{definition}
\label{def:down_up_flips}
	Recall the $2\times 2$ flips in \Cref{fig:local_flips}. Call a flip \emph{up} if
	it \emph{increases} the height function at the flip site by one
	(bringing the RBPD closer to the maximum $\mathbf{b}_{\mathrm{id}}$),
	and \emph{down}
	otherwise (bringing it closer to the minimum $\mathbf{b}_{w_0}$).
	In \Cref{fig:height_function}, up flips are replacing the left local configuration with the right one,
	and down flips are the reverse.
\end{definition}

One might hope that any RBPD can be transformed into $\mathbf{b}_{\mathrm{id}}$ by a sequence of up flips,
but this is not the case. We call a
RBPD $\mathbf{b}\ne \mathbf{b}_{\mathrm{id}}$ \emph{stuck} if it
does not admit any up flips (that preserve reducedness).

For $n=8$, we found exactly ten such stuck RBPDs,
all exhibiting the same mechanism, the \emph{mutual frustration} of two j-elbows
(\Cref{fig:mutual_frustration}, left).
Each of the two j-elbows is locked by a triple crossing
which prevents it from being flipped up. Moreover,
every box-cross annihilation near one of the locked
j-elbows would create a double crossing, which is forbidden.
The state space remains connected
via non-monotone paths:
escaping the trap requires down flips.
Another example, with three j-elbows at $n=9$, is shown in
\Cref{fig:mutual_frustration}, right.

\begin{figure}[htbp]
\centering
\begin{tikzpicture}[scale=0.675]
  \draw[thin, gray] (0,0) grid (8,8);
  \bbox{0}{7} \bbox{1}{7} \bbox{2}{7} \bbox{3}{7}
  \brelbow{4}{7} \bhoriz{5}{7} \bhoriz{6}{7} \bhoriz{7}{7}
  \bbox{0}{6} \bbox{1}{6} \bbox{2}{6} \bbox{3}{6}
  \bvert{4}{6} \brelbow{5}{6} \bhoriz{6}{6} \bhoriz{7}{6}
  \brelbow{0}{5} \bhoriz{1}{5} \bhoriz{2}{5} \bhoriz{3}{5}
  \bcross{4}{5} \bcross{5}{5} \bhoriz{6}{5} \bhoriz{7}{5}
  \bvert{0}{4} \bbox{1}{4} \bbox{2}{4} \brelbow{3}{4}
  \bcross{4}{4} \bjelbow{5}{4} \brelbow{6}{4} \bhoriz{7}{4}
  \bvert{0}{3} \bbox{1}{3} \bbox{2}{3} \bvert{3}{3}
  \bvert{4}{3} \brelbow{5}{3} \bcross{6}{3} \bhoriz{7}{3}
  \bvert{0}{2} \brelbow{1}{2} \bhoriz{2}{2}
  \bcross{3}{2} \bcross{4}{2} \bcross{5}{2} \bcross{6}{2} \bhoriz{7}{2}
  \bvert{0}{1} \bvert{1}{1} \brelbow{2}{1}
  \bcross{3}{1} \bcross{4}{1} \bjelbow{5}{1} \bvert{6}{1} \brelbow{7}{1}
  \bvert{0}{0} \bvert{1}{0} \bvert{2}{0} \bvert{3}{0}
  \bvert{4}{0} \brelbow{5}{0} \bcross{6}{0} \bcross{7}{0}
  \draw[red, line width=4pt, rounded corners=3pt]
    (5,4) rectangle (6,5);
  \draw[red, line width=4pt, rounded corners=3pt]
    (5,1) rectangle (6,2);
\end{tikzpicture}
\hspace{2cm}
\begin{tikzpicture}[scale=0.6]
  \draw[thin, gray] (0,0) grid (9,9);
  \bbox{0}{8} \bbox{1}{8} \bbox{2}{8} \bbox{3}{8} \bbox{4}{8}
  \brelbow{5}{8} \bhoriz{6}{8} \bhoriz{7}{8} \bhoriz{8}{8}
  \bbox{0}{7} \bbox{1}{7} \bbox{2}{7} \bbox{3}{7} \bbox{4}{7}
  \bvert{5}{7} \brelbow{6}{7} \bhoriz{7}{7} \bhoriz{8}{7}
  \bbox{0}{6} \bbox{1}{6} \brelbow{2}{6} \bhoriz{3}{6} \bhoriz{4}{6}
  \bcross{5}{6} \bcross{6}{6} \bhoriz{7}{6} \bhoriz{8}{6}
  \bbox{0}{5} \bbox{1}{5} \bvert{2}{5} \bbox{3}{5} \brelbow{4}{5}
  \bcross{5}{5} \bjelbow{6}{5} \brelbow{7}{5} \bhoriz{8}{5}
  \bbox{0}{4} \bbox{1}{4} \bvert{2}{4} \brelbow{3}{4} \bcross{4}{4}
  \bcross{5}{4} \bhoriz{6}{4} \bcross{7}{4} \bhoriz{8}{4}
  \bbox{0}{3} \bbox{1}{3} \bvert{2}{3} \bvert{3}{3} \bvert{4}{3}
  \bvert{5}{3} \brelbow{6}{3} \bjelbow{7}{3} \brelbow{8}{3}
  \brelbow{0}{2} \bhoriz{1}{2} \bcross{2}{2} \bcross{3}{2} \bcross{4}{2}
  \bcross{5}{2} \bcross{6}{2} \bhoriz{7}{2} \bcross{8}{2}
  \bvert{0}{1} \brelbow{1}{1} \bcross{2}{1} \bjelbow{3}{1} \bvert{4}{1}
  \bvert{5}{1} \bvert{6}{1} \brelbow{7}{1} \bcross{8}{1}
  \bvert{0}{0} \bvert{1}{0} \bvert{2}{0} \brelbow{3}{0} \bcross{4}{0}
  \bcross{5}{0} \bcross{6}{0} \bcross{7}{0} \bcross{8}{0}
  \draw[red, line width=4pt, rounded corners=3pt]
    (6,5) rectangle (7,6);
  \draw[red, line width=4pt, rounded corners=3pt]
    (7,3) rectangle (8,4);
  \draw[red, line width=4pt, rounded corners=3pt]
    (3,1) rectangle (4,2);
\end{tikzpicture}
\caption{Examples of stuck RBPDs for $n=8$ (left) and $n=9$ (right).}
\label{fig:mutual_frustration}
\end{figure}
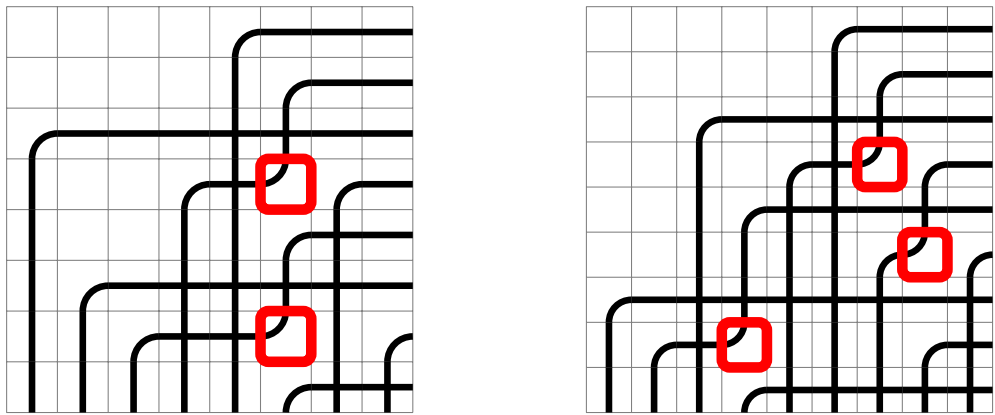

\subsection{Droops, undroops, and connectivity}
\label{sec:droops_connectivity}

The traps of \Cref{sec:traps} show that
establishing connectivity of the RBPD graph under
the $2\times 2$ flips alone (\Cref{conj:rbpd_connectivity}) appears difficult.
While we still believe this conjecture is true, we establish a
weaker connectivity result by supplementing the $2\times 2$ flips with
\emph{droops} and \emph{undroops}, rectangular moves introduced in
\cite{LamLeeShimozono2021BPD}.
We show that the Markov chain with this larger move set is ergodic and
preserves the uniform distribution on the set of all RBPDs, thus providing a
valid MCMC sampler.

\begin{definition}[Droop and undroop {\cite{LamLeeShimozono2021BPD}}]
\label{def:droop}
Let $R = [i_1, i_2] \times [j_1, j_2]$ be a rectangle in an $n \times n$ BPD
with $i_1 < i_2$ and $j_1 < j_2$.
Call $R$ \emph{droopable} if
the NW corner $(i_1, j_2)$ is an r-elbow,
the SE corner $(i_2, j_1)$ is empty,
and $R$ has no elbows except possibly at the four corners
(i.e., every non-corner tile of $R$ is empty, cross, vertical, or horizontal).

The \emph{droop} at $R$ replaces tiles on the boundary of $R$ as follows:
the NW corner becomes empty, the SE corner becomes a j-elbow,
and the NE and SW corners both become r-elbows.
On the four borders (excluding corners):
the north and west borders \emph{retract}
(horizontal $\to$ empty, cross $\to$ vertical on north;
vertical $\to$ empty, cross $\to$ horizontal on west),
while the south and east borders \emph{extend}
(empty $\to$ horizontal, vertical $\to$ cross on south;
empty $\to$ vertical, horizontal $\to$ cross on east).
Interior tiles are unchanged.

The \emph{undroop} is the inverse:
if the NW corner is empty, the SE corner is a j-elbow,
the NE and SW corners are r-elbows,
and there are no interior elbows,
then the undroop restores the r-elbow at NW and the empty tile at SE.
A $3 \times 3$ droop is shown in \Cref{fig:droop_examples}.
\end{definition}

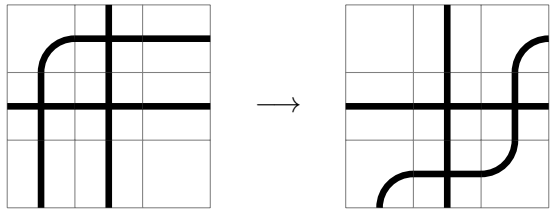
\begin{figure}[htbp]
\centering
\begin{tikzpicture}[scale=0.9]
  \begin{scope}
    \draw[thin, gray] (0,0) grid (3,3);
    \brelbow{0}{2} \bcross{1}{2} \bhoriz{2}{2}
    \bcross{0}{1} \bcross{1}{1} \bhoriz{2}{1}
    \bvert{0}{0} \bvert{1}{0} \bbox{2}{0}
  \end{scope}
  \node at (4,1.5) {$\longrightarrow$};
  \begin{scope}[shift={(5,0)}]
    \draw[thin, gray] (0,0) grid (3,3);
    \bbox{0}{2} \bvert{1}{2} \brelbow{2}{2}
    \bhoriz{0}{1} \bcross{1}{1} \bcross{2}{1}
    \brelbow{0}{0} \bcross{1}{0} \bjelbow{2}{0}
  \end{scope}
\end{tikzpicture}
\caption{A $3\times 3$ droop.}
\label{fig:droop_examples}
\end{figure}

Droops and undroops 
preserve reducedness and the boundary permutation $w$ of an RBPD.
Moreover, by 
\cite[Proposition~5.3]{LamLeeShimozono2021BPD},
for any $w \in S_n$, every $w$-RBPD can be obtained from the
Rothe RBPD $\mathbf{b}_w$ (see \Cref{sec:notation} for the definition)
by a sequence of droops.

To connect RBPDs with \emph{different} boundary permutations,
we use the $2\times 2$ flips:

\begin{lemma}
\label{lem:rothe_admits_flip}
For $w \ne \mathrm{id}$, the Rothe RBPD $\mathbf{b}_w$ admits a
$2 \times 2$ flip of type~(b) (\Cref{fig:local_flips})
that produces an RBPD with boundary permutation $w'$
satisfying $\ell(w') = \ell(w) - 1$.
\end{lemma}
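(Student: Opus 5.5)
The plan is to exhibit the flip explicitly, using the very rigid structure of $\mathbf{b}_w$. In the Rothe BPD, the pipe attached to row $i$ runs up column $w(i)$ from the bottom edge, turns at an r-elbow in cell $(i,w(i))$, and then runs right along row $i$. So every tile of $\mathbf{b}_w$ is determined by two comparisons:
\begin{itemize}
\item cell $(a,b)$ is an r-elbow if $b=w(a)$;
\item it is a cross if $b>w(a)$ and $w^{-1}(b)<a$;
\item it is horizontal if $b>w(a)$ and $w^{-1}(b)>a$;
\item it is vertical if $b<w(a)$ and $w^{-1}(b)<a$;
\item it is empty, i.e.\ lies in $D(w)$ of \eqref{eq:rothe}, if $b<w(a)$ and $w^{-1}(b)>a$.
\end{itemize}
In particular $\mathbf{b}_w$ has no j-elbows. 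Since $w\neq\mathrm{id}$, it has a descent $i$ with $w(i)>w(i+1)$, and I will localize the flip at this descent.

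\emph{Choice of descent and window.} I would take $i\in\Des(w)$ and look at rows $i,i+1$ near column $w(i)$. Pipe $i$ turns at $(i,w(i))$. The pipe in column $w(i)$ passes through row $i+1$ vertically, and since $w(i+1)<w(i)$ the horizontal part of pipe $i+1$ reaches column $w(i)$, so $(i+1,w(i))$ is a cross. Cell $(i+1,w(i)+1)$ is then horizontal or a cross, depending on whether $w^{-1}(w(i)+1)$ lies above or below row $i$. A type~(b) cross-annihilation moves in \Cref{fig:local_flips} need an r-elbow on one diagonal and a cross on the other. So I would use the $2\times2$ window with rows $i,i+1$ and columns $w(i)-1,w(i)$, or else columns $w(i),w(i)+1$. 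Using the table of tiles above, I would check that the four tiles in one of these windows match the cross side of one of the eight type~(b) configurations. That flip removes the cross at $(i+1,w(i))$ and reroutes the two pipes so that they now kiss instead of crossing.

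To avoid a mismatch caused by neighboring pipes, I expect to have to choose the descent carefully. The first choice I would try is an $i$ that makes $(i+1,w(i))$ the cross of the two pipes $i,i+1$ nearest to the r-elbow, for example with $w(i+1)$ maximal below $w(i)$. If that fails, I would instead take an essential (southeast-corner) cell $(a,b)$ of $D(w)$, so that $(a,b)$ is empty and $(a,b+1)$, $(a+1,b)$ are not. Its neighbors are then forced: the dot of row $a$ lies to the right, and column $b$ has its dot at or above row $a+1$. This should pin down the window $\{a,a+1\}\times\{b,b+1\}$ completely.

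\emph{Effect on the permutation.} After the flip, the two pipes that met at the removed cross exchange their exit rows on the right boundary, and every other pipe is untouched. Hence $w'=w\cdot(i,i+1)=w\cdot s_i$, or in the essential-cell variant $w'=w\cdot t$ for the corresponding transposition. In the descent case $\ell(w\cdot s_i)=\ell(w)-1$ immediately. In the essential-cell case I would use \Cref{lemma:bruhat_cover}: the emptiness pattern around an essential cell forbids intermediate values, so $w$ covers $w'$ in Bruhat order and again $\ell(w')=\ell(w)-1$.

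\emph{Reducedness.} The number of crosses equals $\ell(w)$ in $\mathbf{b}_w$. The flip deletes exactly one cross, and no pair of pipes can gain a crossing. So the new BPD has $\ell(w)-1$ crosses for a permutation of length $\ell(w)-1$. Since a BPD is reduced exactly when its number of crosses equals the length of its boundary permutation, the result is an RBPD.

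The main obstacle is the case analysis in the window-identification step: guaranteeing that the chosen window exactly matches one of the pictured type~(b) configurations for every $w\neq\mathrm{id}$. That is where the precise choice of descent or essential cell has to be made.
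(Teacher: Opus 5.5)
\textbf{There is a genuine gap, in the step that establishes the Bruhat cover.} Your window analysis at an essential cell $(a,b)$ of $D(w)$ is correct. By your tile table, $(a,b+1)$ is an r-elbow or vertical, $(a+1,b)$ is horizontal or an r-elbow, and $(a+1,b+1)$ is forced to be a cross. So the window is the cross side of a lower-row type~(b) flip.

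The next step fails. The claim that ``the emptiness pattern around an essential cell forbids intermediate values'' is false. The flip exchanges the exits of the pipes entering at columns $q=w(a+1)$ and $b+1$, so $w'=w\cdot(p,a+1)$ with $p=w^{-1}(b+1)\le a$. Nothing local at $(a,b)$ excludes a $k$ with $p<k<a$ and $q<w(k)<b$.

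Here is a counterexample. Take $w=(4,5,2,6,1,3)$, with $\ell(w)=9$, and its essential cell $(4,3)$.
\begin{itemize}
\item The window $\{4,5\}\times\{3,4\}$ reads (NW, NE, SW, SE) $=$ (empty, vertical, horizontal, cross).
\item The flip gives $w'=w\cdot(1,5)=(1,5,2,6,4,3)$, with $\ell(w')=6$.
\item Eight crosses remain; the pipes entering at columns $1$ and $2$ now cross at both $(5,2)$ and $(3,4)$. So the result is not reduced.
\end{itemize}

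Your descent-based route does not rescue this $w$. Its descents are $2$ and $4$, and the windows in columns $w(i)-1,w(i)$ have a vertical tile in the NW corner. Windows in columns $w(i),w(i)+1$ can never work: they put the cross in the SW corner, whereas in type~(b) the cross sits only in the NW or SE corner, diagonally opposite an \emph{empty} tile (not an r-elbow). Already $w=231$ has no usable descent.

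What is missing is a global rule for choosing the cross, together with a global reason for the cover property. The paper takes the topmost cross $(i,j)$ in the leftmost column that contains any cross. Then $(i-1,j-1)$ is empty; it is in fact an essential cell. The key point is a pipe argument:
\begin{itemize}
\item Any pipe entering at a column strictly between the entry columns of the two crossing pipes has no cross in its vertical run.
\item So it must turn before reaching row $i$; otherwise it would cross the horizontal pipe of row $i$ to the left of column $j$.
\item Hence it exits strictly below row $i$, while the two crossing pipes exit at row $i$ and above it.
\end{itemize}
This excludes intermediate values, and \Cref{lemma:bruhat_cover} then gives $\ell(w')=\ell(w)-1$.

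Alternatively, within your framework, choose an essential cell in the leftmost column that contains essential cells. If such an $(a,b)$ were bad as above, then $(a+1,w(k))$ would be a cross in a column left of $b$. The leftmost cross column, and the essential cell just NW of its topmost cross, would then lie further left, a contradiction. With either choice, your final counting argument for reducedness goes through.
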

\begin{proof}
The Rothe RBPD $\mathbf{b}_w$ has no j-elbows:
its empty tiles are exactly the cells of the Rothe diagram $D(w)$,
and all other non-cross tiles are r-elbows, vertical, or horizontal.
Since $w \ne \mathrm{id}$, the diagram $D(w)$ is nonempty.  Choose a cross in position $(i,j)$, in the leftmost column where crosses occur (so $j$ is minimal), and such that the cell $(i-1,j-1)$ is empty.  (For instance, the topmost cross in this leftmost column works.) The vertical pipe of this cross comes from column $b=j$ on the bottom (south) edge and exits at row $w(b)$ on the right (east) edge.  The horizontal pipe of the cross comes from column $a=w^{-1}(i)$ on the bottom (south) edge and exits at row $i=w(a)$ on the right (east).  Our labels chosen such that $a<b$ and (since the pipes cross) $w(a)>w(b)$.  From our choice of cross, and the fact that there are no j-elbows, for any $c$ with $a<c<b$, the pipe beginning at column $c$ on the south edge must exit at row $w(c)>w(a)>w(b)$.  It follows that $\ell( w\cdot(a,b) ) = \ell(w)-1$.  See Figure~\ref{fig:rothe-cross-proof}.

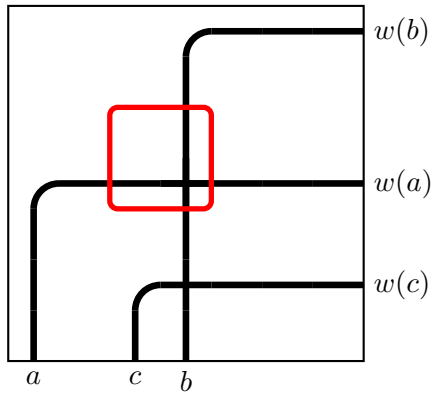
\begin{figure}
\begin{center}
\begin{tikzpicture}[scale=0.675]

  \draw[thick] (0,0) rectangle (7,7);

  \bvert{3}{5} \bvert{3}{4} \bvert{3}{3} \bvert{3}{2} \bvert{3}{1} \bvert{3}{0}
  \bhoriz{4}{6} \bhoriz{5}{6} \bhoriz{6}{6}
  \brelbow{3}{6}

  \bvert{0}{2} \bvert{0}{1} \bvert{0}{0}
  \bhoriz{1}{3} \bhoriz{2}{3} \bhoriz{3}{3} \bhoriz{4}{3} \bhoriz{5}{3} \bhoriz{6}{3}
  \brelbow{0}{3}

   \bvert{2}{0}
  \bhoriz{3}{1} \bhoriz{4}{1} \bhoriz{5}{1} \bhoriz{6}{1}
  \brelbow{2}{1}
  \bcross{3}{3}

  \node[below] at (0.5,0) {$a$};
  \node[below] at (3.5,0) {$b$};
  \node[below] at (2.5,0) {$c$};

  \node[right] at (7,3.5) {$w(a)$};
  \node[right] at (7,6.5) {$w(b)$};
  \node[right] at (7,1.5) {$w(c)$};

    \draw[red, line width=2pt, rounded corners=3pt]
    (2,3) rectangle (4,5);
\end{tikzpicture}
\end{center}
\caption{Schematic view of a cross in the Rothe BPD}
\label{fig:rothe-cross-proof}
\end{figure}

The $2\times2$ configuration bounding around our chosen cross at $(i,j)$ and empty tile at $(i-1,j-1)$  looks like one of the following configurations:
\begin{equation*}
	\begin{tikzpicture}[scale=0.5]
		  \begin{scope}[shift={(0,3.5)}]
    \draw[thin,gray!60] (0,0) grid (2,2);
    \brelbow{1}{1} \bhoriz{0}{0} \bcross{1}{0}
  \end{scope}
  \begin{scope}[shift={(5,3.5)}]
    \draw[thin,gray!60] (0,0) grid (2,2);
    \brelbow{1}{1} \brelbow{0}{0} \bcross{1}{0}
  \end{scope}
  \begin{scope}[shift={(10,3.5)}]
    \draw[thin,gray!60] (0,0) grid (2,2);
    \bvert{1}{1} \bhoriz{0}{0} \bcross{1}{0}
  \end{scope}
  \begin{scope}[shift={(15,3.5)}]
    \draw[thin,gray!60] (0,0) grid (2,2);
    \bvert{1}{1} \brelbow{0}{0} \bcross{1}{0}
  \end{scope}
	\end{tikzpicture}
\end{equation*}
(Highlighted in red in Figure~\ref{fig:rothe-cross-proof}.)  These are exactly the configurations before flips 
in \Cref{fig:local_flips}, namely, in the lower row in type (b).
The corresponding type (b) flip annihilates the chosen cross at $(i, j)$, and the resulting permutation is $w\cdot (a,b)$, which is of length one less than $w$, as observed above.
\end{proof}

\begin{theorem}[Connectivity]
\label{thm:rbpd_connectivity}
	For every $n$, the set of all $n \times n$ RBPDs
	is connected under $2 \times 2$ flips (\Cref{fig:local_flips})
	that preserve reducedness, together with droops and undroops.
\end{theorem}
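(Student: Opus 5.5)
We show that every RBPD of size $n$ is joined to the single RBPD $\mathbf{b}_{\mathrm{id}}$ by a path of allowed moves. Since every move used here is reversible (a flip preserving reducedness can be undone by the inverse flip, which also preserves reducedness, and droops and undroops are inverse to each other), the connected component of $\mathbf{b}_{\mathrm{id}}$ is then the whole space. The proof is an induction on $\ell(w)$, where $w$ is the boundary permutation of the given RBPD $\mathbf{b}$.

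\emph{Step 1 (within a fixed $w$).} By \cite[Proposition~5.3]{LamLeeShimozono2021BPD}, $\mathbf{b}$ is obtained from the Rothe RBPD $\mathbf{b}_w$ by a sequence of droops. Reversing that sequence gives a sequence of undroops from $\mathbf{b}$ to $\mathbf{b}_w$. Each intermediate configuration is a $w$-RBPD, because droops and undroops preserve reducedness and the boundary permutation. So every RBPD is connected to the Rothe BPD of its own permutation.

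\emph{Step 2 (lowering the length).} If $w=\mathrm{id}$, then $\mathbf{b}=\mathbf{b}_{\mathrm{id}}$ already, since $\Upsilon_{\mathrm{id}}=1$ (equivalently, $D(\mathrm{id})=\emptyset$ and there is a unique reduced configuration). If $w\neq\mathrm{id}$, we apply \Cref{lem:rothe_admits_flip} to $\mathbf{b}_w$. It gives a single type (b) flip that annihilates a cross and yields a BPD $\mathbf{b}'$ with boundary permutation $w'=w\cdot(a,b)$, where $\ell(w')=\ell(w)-1$.

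The point to check is that this flip is legal, i.e.\ that $\mathbf{b}'$ is reduced. I would argue this by counting crossings. A BPD with boundary permutation $w'$ has at least $\ell(w')$ crosses, with equality exactly when it is reduced, because each inverted pair of pipes must cross at least once. The Rothe BPD $\mathbf{b}_w$ has exactly $\ell(w)$ crosses. Removing one cross leaves $\ell(w)-1=\ell(w')$ crosses, so $\mathbf{b}'$ is reduced. If a fully rigorous accounting is wanted, it suffices to note that the flip only rewires the two pipes $a$ and $b$ locally: they no longer cross, and the set of crossings among all other pairs of pipes is unchanged.

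By the induction hypothesis, $\mathbf{b}'$ is connected to $\mathbf{b}_{\mathrm{id}}$. Concatenating the paths $\mathbf{b}\to\mathbf{b}_w\to\mathbf{b}'\to\mathbf{b}_{\mathrm{id}}$ completes the induction.

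The main obstacle is the reducedness check in Step 2, namely the identity between the number of crosses and the length. Everything else is supplied directly by the cited droop result and by \Cref{lem:rothe_admits_flip}.
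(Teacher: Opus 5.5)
Your proof is correct and follows the paper's argument essentially verbatim: undroop to the Rothe BPD $\mathbf{b}_w$ using \cite[Proposition~5.3]{LamLeeShimozono2021BPD}, apply the type-(b) flip from \Cref{lem:rothe_admits_flip} to lower $\ell(w)$ by one, repeat until you reach $\mathbf{b}_{\mathrm{id}}$, and conclude by reversibility of all moves. Your crossing count (the flipped BPD has $\ell(w)-1=\ell(w')$ crosses, hence is reduced) correctly makes explicit what the paper leaves implicit in the lemma, but the optional aside is inaccurate as worded: the flip swaps the outgoing tails of pipes $a$ and $b$, which reassigns their crossings with third pipes between them, so the count argument, not the aside, is the rigorous justification.
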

\begin{proof}
By \cite[Proposition~5.3]{LamLeeShimozono2021BPD}, any RBPD with boundary
permutation $w$ can be undrooped to the Rothe RBPD $\mathbf{b}_w$, and by
\Cref{lem:rothe_admits_flip}, a type-(b) flip then produces an RBPD whose
boundary permutation has length $\ell(w)-1$.
This new RBPD can again be undrooped to its Rothe RBPD, 
and the length can be reduced again, until reaching 
the maximal RBPD $\mathbf{b}_{\mathrm{id}}$.
Since all moves are reversible, any two RBPDs can be connected
by a sequence of $2\times 2$ flips and droops/undroops.
\end{proof}

\Cref{thm:rbpd_connectivity} is weaker than
\Cref{conj:rbpd_connectivity}, which asserts connectivity
under $2\times 2$ flips alone.

\begin{corollary}
\label{cor:mcmc_ergodic}
Consider a random walk on $n \times n$ RBPDs that at each step
attempts one of the $2(n-1)^2$ possible $2\times 2$ flips
(rejecting those that break reducedness)
or one of the $\binom{n}{2}^2$ possible droop/undroop rectangles
(the count corresponds to choosing row and column coordinates for the rectangle),
such that a move and its inverse are attempted with equal probability (but this probability may depend on the move).
Then the walk is ergodic on the space
of all RBPDs, and its stationary distribution
is uniform.
\end{corollary}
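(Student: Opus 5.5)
The argument is the standard Metropolis-type one for a symmetric proposal on a finite state space. I will show, in order: (i) the transition matrix $P$ is symmetric on the set $\mathcal{R}_n$ of $n\times n$ RBPDs; (ii) $P$ is irreducible; (iii) $P$ is aperiodic. Symmetry gives that the uniform measure satisfies detailed balance, hence is stationary. Irreducibility and aperiodicity on a finite set give ergodicity and uniqueness of the stationary law.

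For symmetry, fix two distinct RBPDs $\mathbf{b}\neq\mathbf{b}'$. Then $P(\mathbf{b},\mathbf{b}')$ is the total probability of attempting a move $m$ that is applicable at $\mathbf{b}$, produces $\mathbf{b}'$, and is not rejected. Each such move has a well-defined inverse $m^{-1}$ taking $\mathbf{b}'$ to $\mathbf{b}$:
\begin{itemize}
\item a $2\times 2$ flip at vertex $v$ in one direction is inverted by the flip at $v$ in the opposite direction (\Cref{fig:local_flips});
\item a droop at rectangle $R$ is inverted by the undroop at the same $R$ (\Cref{def:droop}).
\end{itemize}
A flip is accepted exactly when its output is reduced. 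Since $\mathbf{b}$ and $\mathbf{b}'$ are both reduced, $m$ is accepted at $\mathbf{b}$ if and only if $m^{-1}$ is accepted at $\mathbf{b}'$. Droops and undroops preserve reducedness automatically.

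By hypothesis, $m$ and $m^{-1}$ are attempted with equal probability. Summing over moves, $m\mapsto m^{-1}$ is a bijection between moves from $\mathbf{b}$ to $\mathbf{b}'$ and moves from $\mathbf{b}'$ to $\mathbf{b}$. One point to check is that a given pair $(\mathbf{b},\mathbf{b}')$ could be realized by several moves, for example a $2\times2$ window that is also a droopable rectangle. The bijection is applied move by move, so this causes no problem. Hence $P(\mathbf{b},\mathbf{b}')=P(\mathbf{b}',\mathbf{b})$. Then $\pi P=\pi$ for $\pi$ uniform, since $\sum_{\mathbf{b}}P(\mathbf{b},\mathbf{b}')=\sum_{\mathbf{b}}P(\mathbf{b}',\mathbf{b})=1$.

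Irreducibility is exactly \Cref{thm:rbpd_connectivity}. That theorem gives a path of reducedness-preserving flips and droops/undroops between any two RBPDs. Each step has positive probability under $P$, because every move is attempted with positive probability; I take this to be implicit in the statement.

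Aperiodicity needs one state with $P(\mathbf{b},\mathbf{b})>0$. Take $\mathbf{b}=\mathbf{b}_{\mathrm{id}}$. No droop applies to it, since it has no empty tile to serve as a SE corner. Therefore any attempted droop rectangle leaves the state unchanged, which gives a positive holding probability. (Alternatively, any attempted flip with no valid configuration at the chosen site also fixes the state.) A finite, irreducible, aperiodic chain is ergodic, and its stationary distribution is unique and equal to the uniform one.

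The only delicate step is the symmetry bookkeeping. One must check that rejection respects the inverse pairing, and that no move is attempted "from" a state without its inverse being attempted with the same probability from the target state. Both follow from the reducedness of both endpoints and the hypothesis on the attempt probabilities.
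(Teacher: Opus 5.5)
Your proof is correct and follows essentially the same route as the paper. Irreducibility comes from \Cref{thm:rbpd_connectivity}, aperiodicity from a positive holding (rejection) probability, and detailed balance for the uniform measure from pairing each flip with its reverse and each droop at $R$ with the undroop at the same $R$, under the equal-attempt-probability hypothesis. Your version additionally makes explicit three points the paper leaves implicit: the full-support assumption needed for irreducibility, that a $2\times 2$ droop coincides with a $2\times 2$ flip (which you handle by matching moves one at a time), and the concrete holding state $\mathbf{b}_{\mathrm{id}}$ used for aperiodicity.
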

\begin{proof}
Connectivity follows from \Cref{thm:rbpd_connectivity},
and aperiodicity from the positive rejection probability.
For detailed balance with respect to the uniform distribution,
note that each $2\times 2$ flip is self-inverse,
and each droop at a rectangle $R$ is paired with the undroop at the same~$R$,
so the symmetry condition ensures equal transition rates
in both directions.
\end{proof}

\begin{remark}
As discussed in \Cref{sec:cftp_failure},
already the $2\times 2$ flips alone preclude a valid CFTP scheme.
Adding droops and undroops to the move set
only compounds the difficulty.
Thus, we do not pursue exact sampling via CFTP,
and instead rely on the approximate MCMC sampler based
on \Cref{cor:mcmc_ergodic}.
\end{remark}

\subsection{MCMC parameters and simulation results}
\label{sec:mcmc_results}

The move set of the MCMC sampler 
developed in \Cref{sec:droops_connectivity} above
consists of:
\begin{enumerate}[$\bullet$]
\item \textbf{Local $2\times 2$ flips}
  (\Cref{sec:local_flips}):
  there are $(n-1)^2$ possible $2\times 2$ windows in an $n\times n$ grid,
  each with two directions (flip up or down),
  giving $2(n-1)^2$ flip moves in total.
\item \textbf{Droops and undroops} (\Cref{def:droop}):
  there are $\binom{n}{2}^2$ candidate rectangles in total.
  Depending on the tile types at the corners of a rectangle,
  each rectangle can lead to either a droop, an undroop, or a non-move 
	(when the rectangle 
	is not droopable or undroopable).
\end{enumerate}
To run the chain, we must specify a \emph{proposal rule}
that determines how the chain selects a candidate move at each step.
There is considerable freedom in this choice
(probabilities of each move type, distribution over rectangle sizes, etc.);
by \Cref{cor:mcmc_ergodic}, any symmetric rule with full support
preserves the uniform stationary distribution,
and different rules affect only the mixing rate.
We use the following:
\begin{definition}[MCMC proposal rule]
	\label{def:mcmc_proposal_rule}
	At each step, with probability $\frac{3}{4}$
	we attempt a $2\times 2$ flip
	(choosing a uniformly random $2\times 2$ window and a random direction,
	up or down, as in \Cref{sec:local_flips}).
	This rule is symmetric in the sense that up and down flips are attempted with equal probability.

	With the complementary probability $\frac{1}{4}$ we attempt a droop or undroop.
	We pick a rectangle $R$ from the $\binom{n}{2}^2$ candidates
	(the specific distribution over rectangle sizes is discussed below)
	and check the corner tiles:
	if the northwest corner is an r-elbow and the southeast corner is empty,
	we perform a droop at $R$;
	if the northwest corner is empty and the southeast corner is a j-elbow,
	we perform an undroop;
	otherwise, the move is rejected.
	The rule is symmetric: each rectangle is selected with
	the same probability regardless of whether it leads to a droop or undroop.
\end{definition}

Reducedness of the BPD is maintained at each step
by tracking the pipe colors throughout the $n\times n$ grid
(see \Cref{fig:bpd_examples_intro}, bottom, for an example)
and incrementally updating the boundary permutation and its inversion count.
A BPD is reduced if and only if the number of cross tiles
equals the number of inversions of the boundary permutation.
For a $2\times 2$ flip,
the changed tiles can redirect pipes passing through the $2\times 2$ block;
we trace each affected pipe to the grid boundary in $O(n)$ time,
updating edge colors along the way.
The flip is accepted if reducedness is preserved.
Droops and undroops automatically preserve reducedness
(they do not change the boundary permutation or the cross count),
so no rejection is needed.
After an accepted droop/undroop, we update edge colors
incrementally within the affected rectangle, and propagate changes
outward, in time $O(k^2 + kn)$, where $k$ is longest side length of the rectangle.

\medskip

Let us now discuss the 
distribution over rectangle sizes for droop proposals.
We pick the southeast corner $(i,j)$ 
of the rectangle uniformly at random from $\{2, \ldots, n\}^2$,
and then pick the offsets $\delta_i\in \{1, \ldots, i-1\}$, $\delta_j \in \{1, \ldots, j-1\}$
to determine the northwest corner $(i-\delta_i, j-\delta_j)$ of the rectangle
(it must be inside the $n\times n$ grid).
For exploratory diagnostics, we tested the following distributions for the offsets
(here, $\delta_{\max}$ stands for $i-1$ or $j-1$, depending on the coordinate):
\begin{enumerate}[{\bf 1.}]
\item
\emph{Geometric}: $\Prob(\delta = k) \propto 2^{-k}$, $1\le k \le \delta_{\max}$.
\item
\emph{Uniform}: $\Prob(\delta = k)$ constant on $\{1, \ldots, \delta_{\max}\}$.
\item
\emph{Log-uniform} (\emph{reciprocal}): $\Prob(\delta = k) \propto 1/k$, $1\le k \le \delta_{\max}$.
\item
\emph{Reverse log-uniform} (\emph{reverse reciprocal}):
$\Prob(\delta = k) \propto 1/(\delta_{\max}+1-k)$, $1\le k \le \delta_{\max}$.
\end{enumerate}

\begin{figure}[htbp]
\centering
\includegraphics[width=\textwidth]{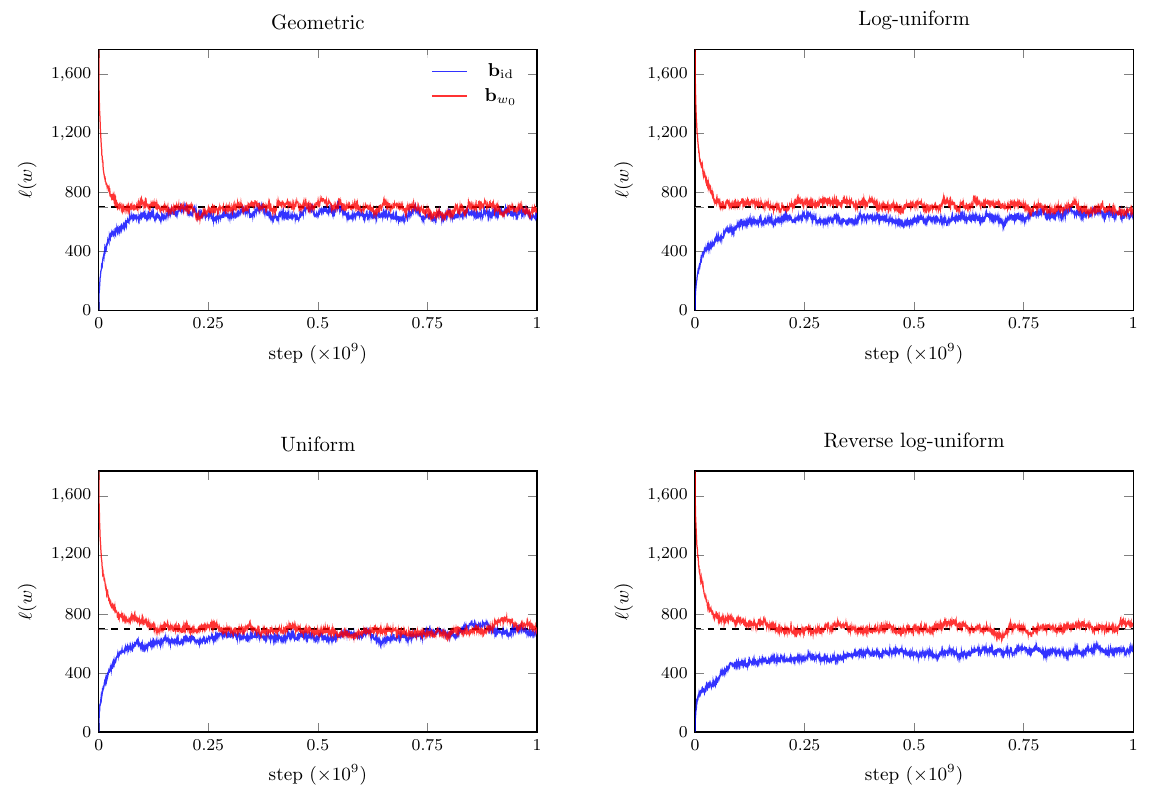}
\caption{Plots of $\ell(w)$ (number of crosses in a BPD, equivalently, length of the boundary permutation)
for the four rectangle size distributions at $n=60$,
starting from $\mathbf{b}_{\mathrm{id}}$ (blue; starting from $\ell=0$)
and $\mathbf{b}_{w_0}$ (red; starting from $\ell=\binom n2=1770$).}
\label{fig:mixing_comparison}
\end{figure}

The diagnostic experiments are performed for $n = 60$.
Each experiment consists of initial \emph{burn-in} steps, followed by
\emph{collection} steps, where we record samples at regular intervals
(thinning).
We started from the two extreme
pipe dreams, $\mathbf{b}_{\mathrm{id}}$ and $\mathbf{b}_{w_0}$
(cf. \Cref{fig:height_function}).
\Cref{fig:mixing_comparison} shows the plot of $\ell(w)$ over $10^{9}$ steps
for each of the four rectangle size distributions, starting from both states.
The dashed line marks $\ell \approx 0.396\binom{n}{2}$,
the expected length of the optimal layered permutation
(\Cref{lem:layered_length_ratio}).
For all four distributions of the rectangle sizes, 
the chain started from $\mathbf{b}_{w_0}$ (red)
reaches this stationary range quickly, with no apparent difference between
the distributions.
The chain starting from $\mathbf{b}_{\mathrm{id}}$ (blue) rises much more slowly,
and systematically stays below the stationary value, even after $10^9$ steps.
For this initial condition,
the average permutation matrix and an example of an RBPD
(starting from $\mathbf{b}_{\mathrm{id}}$)
display characteristic ``stuck'' patterns which seem to prevent 
faster mixing,
see \Cref{fig:permuton_id} for
an illustration.
We conclude that \textbf{starting from $\mathbf{b}_{w_0}$
mixes more reliably}.

\begin{figure}[htpb]
\centering
\includegraphics[width=0.45\textwidth]{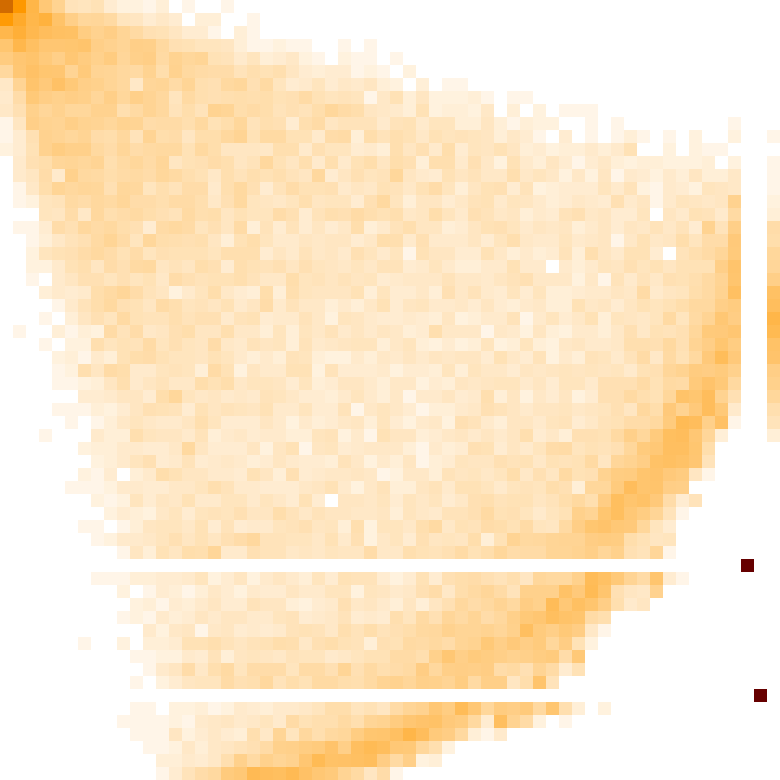}%
\hfill
\includegraphics[width=0.45\textwidth]{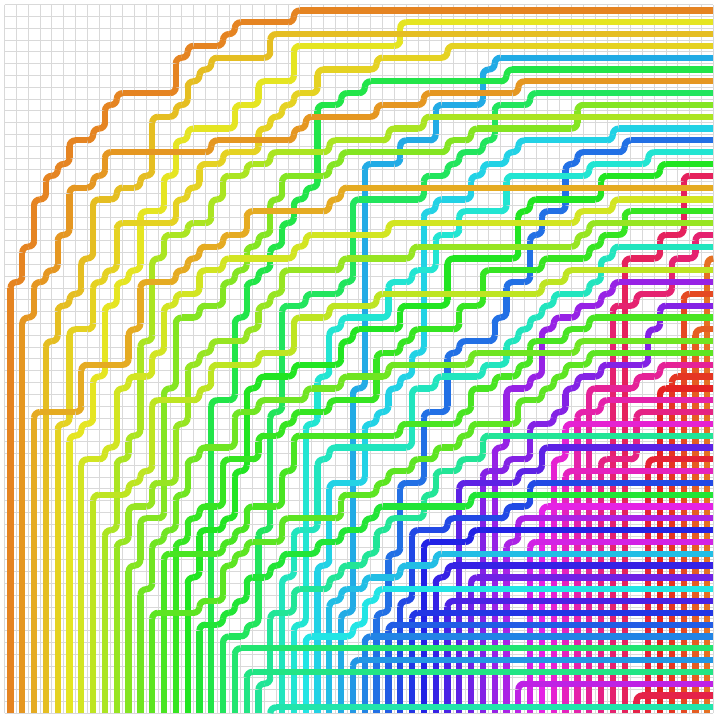}
\caption{A ``stuck'' chain started from $\mathbf{b}_{\mathrm{id}}$
at $n=60$ after $10^9$ steps.
Left: average permutation matrix
\eqref{eq:average_permutation_matrix}
over $500$ samples (thinning $10^6$ steps).
Right: a single RBPD with colored pipes.
Two values ($\sigma(44)=58$ and $\sigma(54)=59$)
are frozen across all $500$ samples (dark dots),
visible in the RBPD as vertical gaps of horizontal tiles
in the bottom-right corner. 
These samples were done with uniform droop proposals, but the same phenomenon appears with other distributions.}
\label{fig:permuton_id}
\end{figure}

Let us discuss the choice of the rectangle-size distribution for the
\emph{collection} phase (after burn-in).
We compared the four distributions 
(geometric, uniform, log-uniform, and reverse log-uniform)
by measuring the lag-1 autocorrelation
$\mathrm{Corr}(\ell(w_t), \ell(w_{t+1}))$ 
of the $\ell(w)$ statistic.
The geometric distribution (with parameter $1/2$, as 
above) achieves the lowest autocorrelation.
We attribute this to its acceptance rate:
geometric droops (small rectangles preferred)
are accepted 100 to 1000 times more often than for other 
distributions, which apparently leads to faster decorrelation between
successive samples.

For the simulations with $n=100$,
we use the geometric
rectangle-size distribution for droop proposals, both in burn-in and collection
phases.
We verified (by checking the equilibration of the $\ell(w)$ 
statistic) that a
burn-in of $10^{10}$ steps is sufficient for $n = 100$.
The $B = 10{,}000$ samples are then collected with thinning by $5 \times 10^8$
steps. We used 40 CPU cores and ran 40 Markov chains in parallel
on the Rivanna HPC cluster at the University of Virginia,
each collecting $250$ samples after an independent burn-in.
The total runtime for the burn-in and collection phases was about 3 hours.

Each sample records the boundary permutation $w$,
from which we accumulate
the \emph{average permutation matrix}
\begin{equation}
	\label{eq:average_permutation_matrix}
	(M_{ij})_{i,j=1}^n,\qquad 
	M_{ij} \coloneqq \frac{1}{B}\sum\nolimits_{t=1}^{B}\mathbf{1}_{w_t(i)=j},
\end{equation}
see \Cref{fig:permuton_n100} for the result.
The matrix $M$ serves as a histogram approximating the hypothetical
limiting \emph{permuton} \cite{hoppen2013limits}, \cite{grubel2023ranks}
of uniform random RBPDs:
if a limit shape exists as $n\to\infty$,
the rescaled matrix $M$ converges to the density of that permuton.

We also record the height function $h_t$ for each sample,
and compute the average height function 
\begin{equation}
	\label{eq:average_height_function}
	\bar{h}(x,y) \coloneqq \frac{1}{B}\sum\nolimits_{t=1}^{B} h_t(x,y),
	\qquad 0\le x,y \le n.
\end{equation}
From the height functions, we extract two quantities.
First, the discrete mixed derivative $\Delta_x \Delta_y \bar{h}$
outlines the ``\emph{liquid region}'' (called
by analogy with \cite{KOS2006}),
the part of the grid where the average height function is not linear
(where all six tile types coexist),
see \Cref{fig:overlay_n100}, left.
The complement to the liquid region consists of \emph{frozen regions}
consisting entirely of
empty, cross, or vertical/horizontal tiles.
Second, we record the height function fluctuations
$h_t - \bar{h}$, which are nontrivial inside the liquid region
and vanish in the frozen regions, see \Cref{fig:height_fluctuation_n100}.

\begin{figure}[htbp]
\centering
\includegraphics[width=0.4\textwidth]{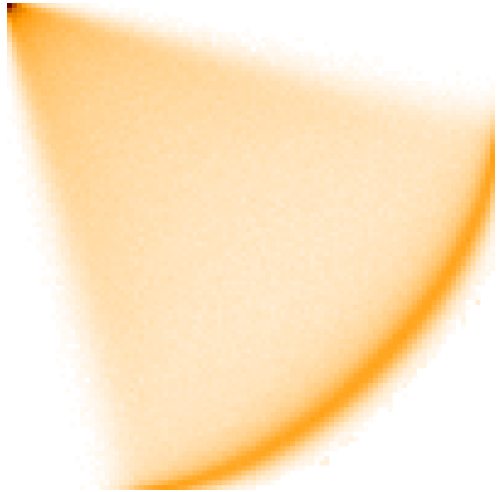}
\caption{Average permutation matrix \eqref{eq:average_permutation_matrix}
at $n=100$ from $10{,}000$ MCMC samples.
The support is contained inside a cone,
with a singular (delta-measure) component visible along the southeast boundary curve.}
\label{fig:permuton_n100}
\end{figure}

\begin{figure}[htbp]
\centering
\includegraphics[width=.4\textwidth]{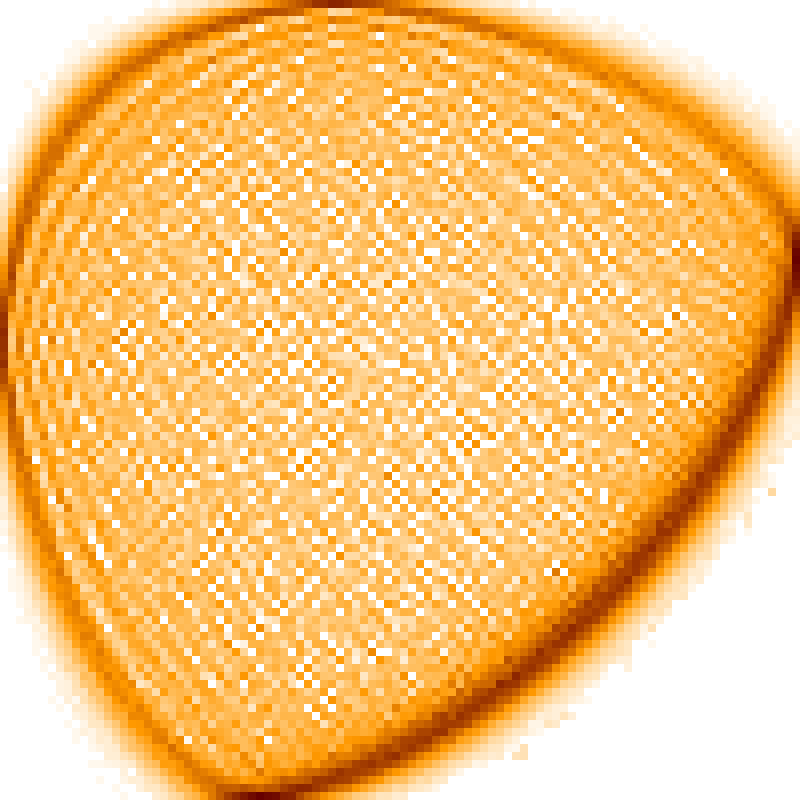}
\hspace{1cm}
\includegraphics[width=.4\textwidth]{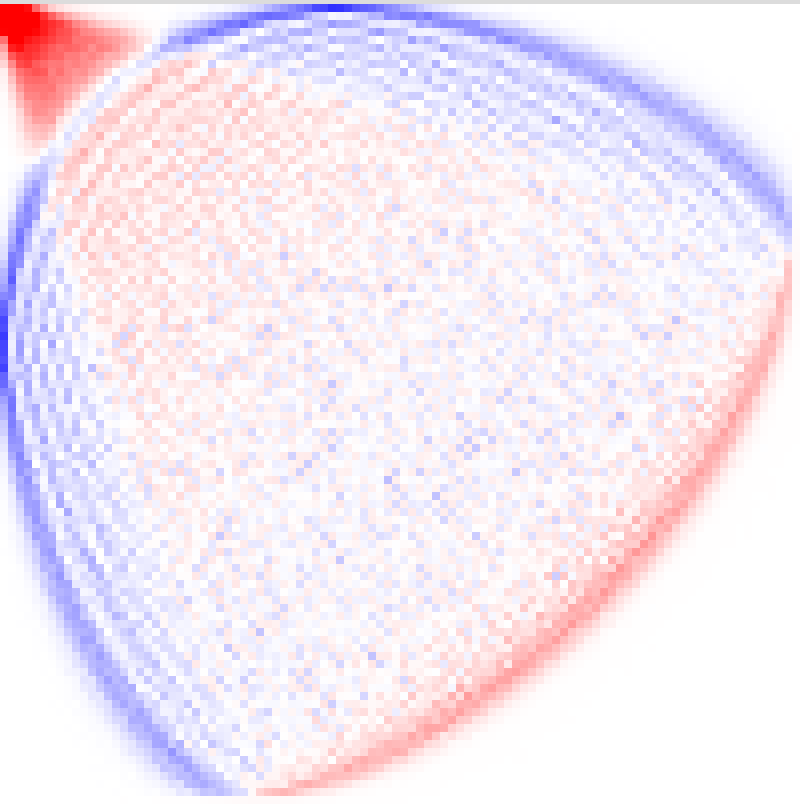}
\caption{Left: the discrete mixed derivative $\Delta_x \Delta_y \bar{h}$.
Right: plot of the difference of 
$\Delta_x \Delta_y \bar{h}$ and the average permutation matrix $M$
(normalized as probability distributions)
showing that the southeast boundary curves of $M$ and of the liquid region coincide
(see \Cref{conj:delta_boundary} and \Cref{prop:delta_boundary}).}
\label{fig:overlay_n100}
\end{figure}

\begin{figure}[htbp]
\centering
\includegraphics[width=0.45\textwidth]{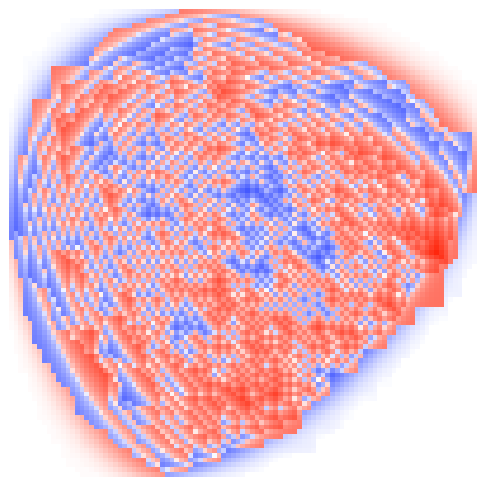}%
\hfill
\includegraphics[width=0.45\textwidth]{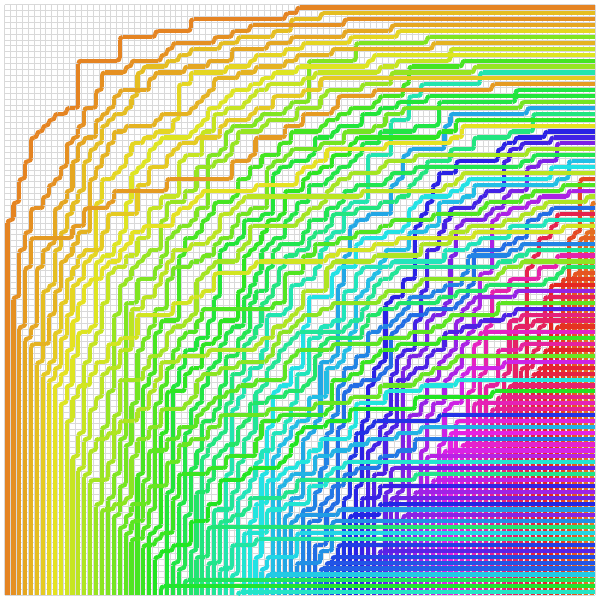}
\caption{A single MCMC sample at $n=100$.
Left: height function fluctuation $h_t - \bar{h}$;
red indicates $h_t > \bar{h}$, blue indicates $h_t < \bar{h}$.
Right: a random reduced bumpless pipe dream. 
One can see that the region adjacent to the southeast corner is 
saturated with cross tiles.
}
\label{fig:height_fluctuation_n100}
\end{figure}

\section{Conjectures and open problems}
\label{sec:further}

\subsection{Schubert measure on permutations and the limiting permuton}
\label{sec:permuton}

The uniform measure on RBPDs of size $n$
weights each permutation $w \in S_n$ proportionally
to its number of reduced bumpless pipe dreams,
which equals the principal specialization $\Upsilon_w$.
We may call the resulting probability measure on $S_n$,
\begin{equation}
\label{eq:schubert_measure}
	\Prob(w) = \frac{\Upsilon_w}{\sum_{v \in S_n} \Upsilon_v},
\end{equation}
the \emph{Schubert measure} on permutations.

A measure on permutations similar to \eqref{eq:schubert_measure} was studied in \cite{GrothendieckShenanigans2024},
when the Schubert polynomials are replaced by Grothendieck polynomials
with the Grothendieck parameter specialized at $\beta=1$.
As $n\to\infty$, these Grothendieck random permutations converge
\cite[Theorem~1.5]{GrothendieckShenanigans2024}
to a deterministic permuton
supported inside a cone, with a singular component concentrated on a quarter circle forming the southeast boundary of the support.
We refer to \cite{hoppen2013limits}, \cite{grubel2023ranks} for background on the permuton convergence
(which, in particular, is equivalent to the convergence of all finite pattern densities).

The average permutation matrix we observe for the Schubert measure is visually similar (\Cref{fig:permuton_n100}), motivating us to make the following conjecture:

\begin{conjecture}[Convergence to the Schubert permuton]
\label{conj:permuton}
The random permutations 
\eqref{eq:schubert_measure}
converge, as $n\to\infty$,
to a deterministic permuton $\mu$ on $[0,1]^2$
\emph{(}which we call the \emph{Schubert permuton)}
supported inside a cone and having a singular component along the southeast boundary curve.
\end{conjecture}
For the Schubert permuton, the boundary curve does not appear to be a quarter circle.

\subsection{Limit shape and arctic curve}
\label{sec:limit_shape}

The discrete mixed derivative $\Delta_x \Delta_y \bar{h}$
(\Cref{fig:overlay_n100}, left) 
and fluctuations of $h_t - \bar{h}$
(\Cref{fig:height_fluctuation_n100}, left)
detect the \emph{liquid region} of the 
uniformly random RBPD, where all six tile types coexist with positive density.
The liquid region is symmetric about the main diagonal
(reflecting the $w \mapsto w^{-1}$ symmetry of the model),
but does \emph{not} appear to have quarter-turn symmetry,
in contrast with the Grothendieck case \cite{GrothendieckShenanigans2024}
(equivalent to domino tilings of the Aztec diamond 
\cite{jockusch1998random}, \cite{cohn-elki-prop-96})
and the uniformly random six-vertex model with domain wall boundary conditions 
\cite{ColomoPronko2010limitshape},
\cite{Agg6V},
whose limit shapes are invariant under quarter-turn rotation of the square.

\begin{conjecture}[Limit shape and arctic curve]
\label{conj:delta_boundary}
As $n \to \infty$, the height function of a uniformly random
RBPD of size $n$ converges to a deterministic limit shape,
consisting of four frozen regions adjacent to the corners of the
domain (where the height function is linear)
and a liquid region (where it is curved),
separated by a deterministic \emph{arctic curve}.
The southeast arc of the RBPD arctic curve coincides with
the support of the singular component
of the Schubert permuton (\Cref{conj:permuton}).
\end{conjecture}

We prove the following 
coincidence of the southeast component of the arctic curve and
the singular component of the permuton, assuming that the 
frozen region in the RBPDs exists with an exponential rate of convergence:
\begin{proposition}
\label{prop:delta_boundary}
	Let $R_n \subset \{1,\ldots,n\}^2$ be a SE-justified
	region in the BPD grid,
	that is, if $(i,j) \in R_n$,
	then $(i',j') \in R_n$ for all $i' \ge i$ and $j' \ge j$.\footnote{The
	grid coordinates are as in \Cref{fig:rothe_diagram},
	with $(1,1)$ in the northwest corner and $(n,n)$ in the southeast corner.}
	Furthermore, assume that $R_n$ does not touch the north or west boundaries of the grid.
	Suppose that
	the RBPDs exhibit the \emph{frozen region:}
	for some constant $c > 0$ not depending on $n$,
	every tile $(i,j) \in R_n$ of a uniformly random RBPD of size $n$
	is a cross with probability at least $1 - e^{-cn}$.
	Then
	there exists a constant $c' > 0$ such that
	with probability $1-e^{-c'n}$ as $n\to\infty$, 
	the region $R_n$ in the permutation matrix of a 
	Schubert random permutation $w$ of size $n$
	does not contain any points, that is,
	$(w(k),\, k) \notin R_n$ for all $k \in \{1,\ldots,n\}$.
\end{proposition}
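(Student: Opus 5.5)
The plan is a deterministic combinatorial statement plus a union bound. Let $E_n$ be the event that every tile of $R_n$ is a cross. Since $|R_n|\le n^2$, the frozen-region hypothesis and a union bound give
\[
\Prob(E_n^c)\le n^2 e^{-cn}\le e^{-c'n}
\]
for any $0<c'<c$ and all large $n$. It then remains to show: \emph{on $E_n$, no point of the permutation matrix lies in $R_n$.} Equivalently, no BPD whose tiles in $R_n$ are all crosses can have boundary permutation $w$ with $(w(k),k)\in R_n$.

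For the deterministic step, I fix a BPD in $E_n$ and suppose $(w(k),k)\in R_n$ for some $k$. Here I use the convention that the pipe entering the bottom edge in column $k$ exits the east edge in row $w(k)$. Because $R_n$ is SE-justified, the whole rectangle $[w(k),n]\times[k,n]$ lies in $R_n$, so every tile in it is a cross. I then trace this pipe twice.

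\begin{enumerate}[(i)]
\item \emph{Forward trace.} The pipe enters tile $(n,k)$ from the south. A cross transmits it straight, so it continues vertically through $(n,k),(n-1,k),\dots,(w(k),k)$. In particular it occupies the vertical strand of the cross at $(w(k),k)$.
\item \emph{Backward trace.} The pipe leaves tile $(w(k),n)$ through its east side, and that tile is a cross. Tracing backward along row $w(k)$ through the crosses at $(w(k),n),(w(k),n-1),\dots,(w(k),k)$, the pipe must occupy the horizontal strand of the cross at $(w(k),k)$. So it entered that tile from the west, i.e.\ from column $k-1$.
\end{enumerate}

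These two traces contradict each other. Bumpless pipes move only north and east, so a pipe that starts in column $k$ never visits a column $<k$. The horizontal strand at $(w(k),k)$ therefore cannot belong to the pipe that started in column $k$. Equivalently, the same pipe cannot occupy both strands of a single cross tile, since that would be a self-crossing of a monotone NE path. Hence $(w(k),k)\notin R_n$ for every $k$ on $E_n$, and the proposition follows with probability at least $1-e^{-c'n}$.

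The main obstacle is bookkeeping rather than depth. First, I must pin down the coordinate and labeling conventions: which boundary edges pipes enter and exit, and whether the matrix point is $(w(k),k)$ or $(k,w(k))$. Second, the argument needs the row segment and column segment through $(w(k),k)$ to lie entirely in $R_n$, and this is exactly what SE-justification provides. The hypothesis that $R_n$ avoids the north and west boundaries seems only to guarantee that the backward trace has a genuine tile $(w(k),k-1)$ to come from, i.e.\ $k\ge 2$. If the conventions in the paper are transposed, I would run the same argument with rows and columns swapped, using the $w\mapsto w^{-1}$ symmetry.
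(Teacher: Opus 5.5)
Your proof is correct and follows the paper's argument: a union bound over the $O(n^2)$ tiles of $R_n$, then tracing pipe $k$ straight up through the crosses in column $k$ and using that pipes only move north and east. The backward trace along row $w(k)$ is valid but unnecessary, since the forward trace already shows that pipe $k$ leaves $(w(k),k)$ through its north side and can therefore only exit in a row strictly above $w(k)$; the paper phrases this as the pipe exiting above the top of the column-$k$ block of $R_n$.
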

\begin{proof}
Condition on the 
event that every tile in $R_n$ is a cross (by a union bound over the $O(n^2)$ tiles in $R_n$, this has probability $1-e^{-c''n}$
for some $c'' > 0$).
Recall that the pipe $k$ enters the grid from the south at column $k$
and exits on the east boundary at row $w(k)$.

Assume first that $(n, k) \in R_n$.
Then the cells of $R_n$ in column $k$
form a contiguous block $\{i_k^*,\, i_k^*+1,\, \ldots,\, n\}$
for some $i_k^*$. 
Pipe $k$ enters from the south at $(n,k)$
and passes straight up through 
$R_n$, exiting column $k$ at row $\le i_k^* - 1$.
Since after that the pipe must continue traveling only north and east,
its exit row $w(k)$ must satisfy
$w(k) \le i_k^* - 1 < i_k^*$,
so $(w(k), k) \notin R_n$.

It remains to consider the case that $(n, k) \notin R_n$.
Then $(i,k) \notin R_n$ for all $i$.
So $(w(k), k) \notin R_n$ regardless of $w(k)$.
This completes the proof.
\end{proof}

Heuristically, in \Cref{fig:permuton_n100,fig:overlay_n100,fig:height_fluctuation_n100},
we see that some mass from the southeast frozen region in the RBPD 
grid is concentrated along the southeast boundary curve in the permuton,
but \Cref{prop:delta_boundary} does not rule out the possibility
that the permuton may be absolutely continuous with respect to the Lebesgue measure
in $[0,1]^2$.

\medskip

The height function fluctuations $h_t - \bar{h}$
are nontrivial inside the liquid region
and vanish in the frozen regions
(\Cref{fig:height_fluctuation_n100}).
For dimer models, such fluctuations converge
to the Gaussian Free Field (GFF) \cite{Kenyon2001GFF}, \cite{Petrov2012GFF}, \cite{bufetov2016fluctuations}, \cite{ChelkakLaslierRusskikh2021GFF}
(see \cite{Sheffield2007GFF} for the standard background on the GFF).
For the non-free-fermion six-vertex model
with domain wall boundary conditions,
the fluctuation structure is far less 
understood.\footnote{Even the full limit shape picture is
not fully developed, cf. 
\cite{ColomoPronko2010limitshape}, 
\cite{colomo2010arctic}, \cite{deGierKenyon2021limit}, \cite{Agg6V}.}

\begin{question}
\label{q:fluctuations}
Do the height function fluctuations of uniformly random RBPDs
(see \Cref{fig:height_fluctuation_n100})
converge to the GFF inside the liquid region?
\end{question}

In support of the GFF behavior, let us note that our MCMC samples at $n=100$
exhibit fluctuations of order $\sqrt{\log n}$:
the maximum of $|h_t - \bar{h}|$ is approximately $2.2$--$2.9$
across independent samples, consistent with $\sqrt{\log 100} \approx 2.15$.

\subsection{Merzon--Smirnov conjecture}
\label{sec:merzon_smirnov_further}

The Merzon--Smirnov conjecture (\Cref{conj:merzon_smirnov})
holds for $n \le 13$ by exhaustive search (\Cref{prop:full_search_13}),
extending prior verification for $n \le 10$ in \cite{merzon2016determinantal}.
At $n = 17$ the conjecture fails 
(\Cref{prop:merzon_smirnov_disproved}).
The counterexample $w^*$ \eqref{eq:counterexample}
differs from the optimal layered permutation $w(1,2,4,10)$
by a single adjacent transposition.
We computed $\Upsilon_w$ for all permutations within 
Cayley distance $4$
of the optimal layered permutation for each $n \le 16$,
and found no counterexamples;
at $n = 17$, the counterexample $w^*$ at Cayley distance $1$ is the unique immediate
neighbor of $w(1,2,4,10)$ that exceeds the layered maximum.
We also found counterexamples at $n = 18$, $19$, and $20$: in each case, the Cayley distance~$1$ neighborhood of the optimal layered permutation contains a unique permutation exceeding the layered maximum, obtained by swapping the last entry of the penultimate block with the first entry of the last block.
These are $s_7$ applied to $w(1,2,4,11)$ at $n=18$, $s_8$ applied to $w(1,2,5,11)$ at $n=19$, and $s_8$ applied to $w(1,2,5,12)$ at $n=20$, exceeding the layered maximum by about $5\%$, $9\%$, and $8\%$, respectively.
The $n=20$ computation required a 40-core cluster node (University of Virginia's Rivanna HPC) due to the size of the BFS frontiers.\footnote{While the benchmarks for layered permutations (\Cref{sec:layered_benchmark}) show the transition formula beating cotransition for $n=16,17$, for larger $n$ the caching required for this speedup becomes infeasible, so we default to cotransition and BFS rather than DFS; see \Cref{rmk:long_computation}.} 
However, applying the same swap to the optimal layered permutation for $n \le 16$ always \emph{decreases} $\Upsilon_w$, so the pattern does not extend downward.

Searching further from the optimal layered permutation
reveals even larger values.
The permutation $u^*$ \eqref{eq:counterexample2},
obtained from $w^*$ by additionally transposing the entries in positions~$4$ and~$17$,
exceeds the layered maximum by about $16\%$
(compared to the $7\%$ excess in
\Cref{prop:merzon_smirnov_disproved}).
The analogous permutation
\begin{equation}
\label{eq:more_excess}
(1,3,2,8,6,5,18,4,17,16,15,14,13,12,11,10,9,7) \in S_{18}
\end{equation}
exceeds the layered maximum by about $12\%$.
We illustrate the maximal layered and the newly discovered permutation in 
\Cref{fig:perm_comparison}.

\begin{figure}[ht]
\centering
\begin{tikzpicture}[scale=0.25]
  \begin{scope}
    \draw[gray!40, very thin] (0.5,0.5) grid (18.5,18.5);
    \foreach \v [count=\i] in {1,3,2,7,6,5,4,18,17,16,15,14,13,12,11,10,9,8} {
      \fill (\v,{19-\i}) circle (5pt);
    }
    \foreach \i in {1,2,3,18} {
      \node[below, font=\tiny] at (\i,0.2) {\i};
      \node[left, font=\tiny] at (0.2,{19-\i}) {\i};
    }
    \node[below=6pt, font=\small] at (9.5,0) {$w(1,2,4,11)$};
  \end{scope}
  \begin{scope}[shift={(22,0)}]
    \draw[gray!40, very thin] (0.5,0.5) grid (18.5,18.5);
    \foreach \v [count=\i] in {1,3,2,7,6,5,18,4,17,16,15,14,13,12,11,10,9,8} {
      \fill (\v,{19-\i}) circle (5pt);
    }
    \foreach \i/\v in {7/18,8/4} {
      \fill[red] (\v,{19-\i}) circle (5pt);
    }
    \foreach \i in {1,2,3,18} {
      \node[below, font=\tiny] at (\i,0.2) {\i};
    }
    \node[below=6pt, font=\small] at (9.5,0) {$w(1,2,4,11)\cdot s_7$};
  \end{scope}
  \begin{scope}[shift={(44,0)}]
    \draw[gray!40, very thin] (0.5,0.5) grid (18.5,18.5);
    \foreach \v [count=\i] in {1,3,2,8,6,5,18,4,17,16,15,14,13,12,11,10,9,7} {
      \fill (\v,{19-\i}) circle (5pt);
    }
    \foreach \i/\v in {4/8,7/18,8/4,18/7} {
      \fill[red] (\v,{19-\i}) circle (5pt);
    }
    \foreach \i in {1,2,3,18} {
      \node[below, font=\tiny] at (\i,0.2) {\i};
    }
    \node[below=6pt, font=\small] at (9.5,0) {Permutation \eqref{eq:more_excess}};
  \end{scope}
\end{tikzpicture}
\caption{Permutation matrices for the $n=18$
examples: the optimal layered permutation $w(1,2,4,11)$ (left),
and two permutations exceeding the layered maximum.}
\label{fig:perm_comparison}
\end{figure}
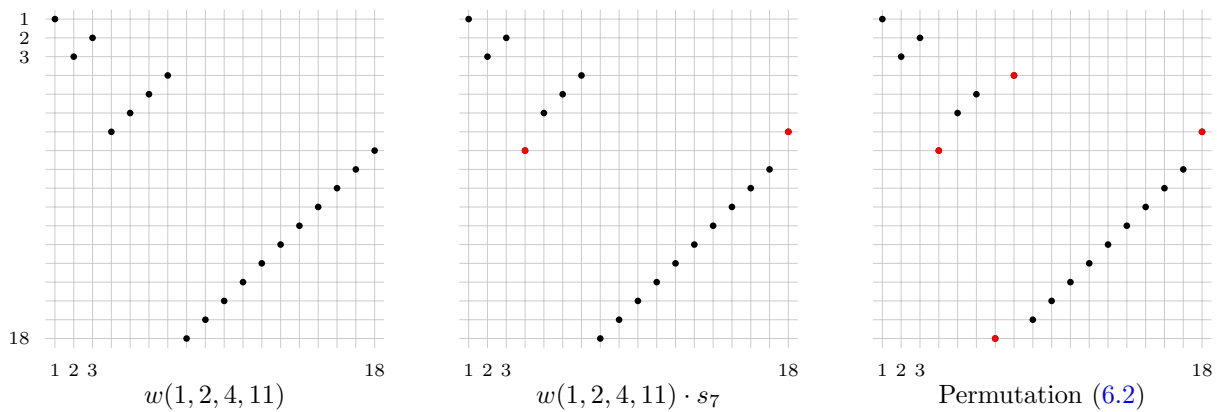

\begin{remark} It now becomes an even more challenging problem to characterize the permutations maximizing $\Upsilon_w$ for general $n$: the newly found examples show that the characteristics would likely not be structural (e.g. pattern avoidance, block structure etc).
\end{remark}

\subsection{Bounds and asymptotics}
\label{sec:asymp}

Despite the failure of Conjecture~\ref{conj:merzon_smirnov}, we expect that the asymptotic growth rate in Stanley's problem \eqref{eq:stanley_question} is determined by layered permutations:
\begin{conjecture}
\label{conj:layered_asymptotic}
The maximum of $\Upsilon_w$ over layered permutations has the 
same exponential growth rate as 
the maximum over the whole $S_n$, that is,
\begin{equation*}
	\lim_{n\to \infty} \frac{1}{n^2} \log_2 \,\max_{w \in S_n} \Upsilon_w
	\,=\,
	\lim_{n\to \infty} \frac{1}{n^2} \log_2 \,\max_{w \text{ layered}} \Upsilon_w
	\,\approx\, 0.29,
\end{equation*}
where the layered limit was computed in \cite{MoralesPakPanova2019}.
In other words, any potential improvement over layered permutations is subexponential in $n^2$.
\end{conjecture}

In support of this we observe the asymptotic behavior of nearby permutations. 

\begin{proposition}[Bounds]\label{Prop:bounds}
    Suppose that $u=w \cdot s_i$ or $u=s_i \cdot w$ with $\ell(u)=\ell(w)+1$. Then 
    \begin{equation}\label{eq:bounds}
        \frac{1}{n-i} \leq \frac{ \Upsilon_u}{\Upsilon_w} \leq i.
    \end{equation}
\end{proposition}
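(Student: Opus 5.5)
The plan is to obtain both inequalities from Monk-type (Pieri) rules, which express the product of a Schubert polynomial with a linear form as a nonnegative sum of Schubert polynomials, and then specialize at $1^n$. Throughout, $\ell(u)=\ell(w)+1$. I treat the case $u=w\cdot s_i$. The case $u=s_i\cdot w$ should follow by passing to inverses, using $\Upsilon_{v}=\Upsilon_{v^{-1}}$; this holds by transposing reduced (bumpless) pipe dreams, which is the diagonal symmetry already noted in \Cref{sec:limit_shape}.

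\emph{Upper bound.} I would use Monk's rule with $\mathfrak{S}_{s_i}=x_1+\dots+x_i$:
\[
\mathfrak{S}_{s_i}\,\mathfrak{S}_w=\sum_{\substack{a\le i<b\\ \ell(w\cdot t_{a,b})=\ell(w)+1}}\mathfrak{S}_{w\cdot t_{a,b}},
\]
where $b$ may exceed $n$ if we embed $w$ in a larger symmetric group. The term $(a,b)=(i,i+1)$ is exactly $\mathfrak{S}_u$, and it is present because $\ell(u)=\ell(w)+1$. I then evaluate at $x=1^n$, setting extra variables to $0$ if $b>n$ occurs. Every specialization is a nonnegative count of pipe dreams, and the left side evaluates to $i\,\Upsilon_w$. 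Dropping all terms except $\Upsilon_u$ gives $\Upsilon_u\le i\,\Upsilon_w$. The one point to check is the convention for $b>n$: Schubert polynomials are stable, and $\mathfrak{S}_v(1^n,0,0,\dots)$ is still a nonnegative integer, so the inequality survives.

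\emph{Lower bound.} Here I would run the same argument with the roles of $w$ and $u$ swapped, using a dual linear form. The natural candidate is the form $x_{i+1}+\dots+x_n$, which specializes to $n-i$. The key step is a ``dual Monk'' identity in which $(x_{i+1}+\dots+x_n)\,\mathfrak{S}_u$ is expanded into terms, one of which is $\mathfrak{S}_w$ with coefficient $1$ and all of which are nonnegative after specialization. Two routes are available:
\begin{enumerate}[(i)]
\item Use the identity $x_1+\dots+x_n\equiv 0$ in the coinvariant ring. This makes $x_{i+1}+\dots+x_n\equiv-\mathfrak{S}_{s_i}$, so the needed identity would come from a length-decreasing (transition-type) expansion of $\mathfrak{S}_u$, as in \Cref{thm:trans}.
\item Use the symmetry $v\mapsto w_0vw_0$, which sends $s_i$ to $s_{n-i}$. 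Under it the upper-bound argument with $n-i$ in place of $i$ would give $\Upsilon_w\le(n-i)\Upsilon_u$.
\end{enumerate}

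\emph{Expected main obstacle.} The lower bound is the hard part. The reduction modulo the coinvariant ideal does not commute with evaluation at $1^n$, and $\Upsilon$ is not invariant under $w_0$-conjugation. My first attempt would therefore be a direct combinatorial injection. From each reduced pipe dream $P$ for $w$, I would produce a reduced pipe dream for $u$ by adding a single crossing that swaps the pipes in positions $i,i+1$, placed in one of the rows $i+1,\dots,n$ according to where those two pipes first become adjacent. I would then show that every pipe dream for $u$ arises from at most $n-i$ pairs $(P,\text{row})$, for example by inverting via Bergeron--Billey chute and ladder moves. This gives $\Upsilon_w\le(n-i)\,\Upsilon_u$. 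As a fallback, I would verify the inequality against the descent formula \Cref{thm:descent} and look for a weighted refinement of Macdonald's identity \eqref{eq:macdonald_formula} that pairs reduced words of $u$ ending in $i$ with reduced words of $w$.
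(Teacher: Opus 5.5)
Your reduction of the case $u=s_i\cdot w$ to $u=w\cdot s_i$ via $\Upsilon_v=\Upsilon_{v^{-1}}$ matches the paper's. So does your upper bound: Monk's rule for $\mathfrak S_{s_i}=x_1+\cdots+x_i$, keeping only the term $\mathfrak S_u$, evaluated at $1^n$. The lower bound, however, is not proved, and none of the routes you sketch leads to it.

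A ``dual Monk'' identity of the kind you want cannot exist. The product $(x_{i+1}+\cdots+x_n)\,\mathfrak S_u$ is homogeneous of degree $\ell(w)+2$, so its Schubert expansion involves only permutations of length $\ell(w)+2$ and never $\mathfrak S_w$. Routes (i) and (ii) you rightly discard yourself; for instance, $\Upsilon_{s_1}=1\neq 2=\Upsilon_{s_2}$ in $S_3$.

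The injection is only asserted, and the construction is not well defined, because pipes $i$ and $i+1$ need not ever share a tile. Take $w=132$, $i=1$, and the reduced pipe dream whose only cross is at $(2,1)$. Pipe $1$ occupies just the left-to-top arc of box $(1,1)$, whose other arc carries pipe $3$, and pipe $2$ never meets it. Moreover, $u=312$ has a single pipe dream, with crosses at $(1,1)$ and $(1,2)$. So this pipe dream of $w$ can only be sent there by moving its existing cross, not by inserting one. Since $\Upsilon_{132}=2=(n-i)\,\Upsilon_{312}$, the fibre bound you postpone is sharp, and it is the whole content of the inequality.

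The missing ingredient is the divided-difference recursion itself. Because $\ell(w\cdot s_i)=\ell(w)+1$, one has $\mathfrak S_w=\partial_i\mathfrak S_u$. Write $\mathfrak S_u=\sum_{\mathbf a}c_{\mathbf a}\mathbf x^{\mathbf a}$ with $c_{\mathbf a}\ge 0$. Every monomial has $a_i\le n-i$, since row $i$ of the staircase has $n-i$ boxes. A one-line computation gives $(\partial_i\mathbf x^{\mathbf a})(1^n)=a_i-a_{i+1}$, so
\[
\Upsilon_w=\sum_{\mathbf a}c_{\mathbf a}\,(a_i-a_{i+1})\le (n-i)\sum_{\mathbf a}c_{\mathbf a}=(n-i)\,\Upsilon_u ,
\]
which is the paper's argument.

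The bijective statement you were after does exist: it is Knutson--Miller mitosis. It realizes $\partial_i$ on reduced pipe dreams by splitting each pipe dream of $u$ into at most as many pipe dreams of $w$ as there are leading crosses in row $i$, hence at most $n-i$. The offspring, taken over all pipe dreams of $u$, partition the pipe dreams of $w$. It works by deleting a cross in row $i$ and sliding other crosses from row $i$ down to row $i+1$, not by adding a single crossing.
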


\begin{proof}
    Suppose that $u = w \cdot s_i$. The case $u = s_i \cdot w$ follows from it by observing that 
    $u^{-1} = w^{-1} s_i $ and that $\Upsilon_u = \Upsilon_{u^{-1}}$ for every permutation.
    
   For the lower bound, we use the divided difference formula, $\mathfrak{S}_w(x_1,\ldots,x_n) = \partial_i \mathfrak{S}_u(x_1,\ldots,x_n)$.  (See Appendix~\ref{app:schubert_background}.)  Every monomial of $\mathfrak{S}_u$ is of the form $x_1^{a_1}\cdots x_{n-1}^{a_{n-1}}$ with $a_i \leq n-i$ (as can be easily seen from the PD interpretation).  We have 
   $$\partial_i {\mathbf x}^{\mathbf a} = \begin{cases} \sum_{j=0}^{a_i-a_{i+1}-1} x_i^{ a_i -j-1} x_{i+1}^{a_{i+1}+j}\prod_{k \neq i,i+1} x_k^{a_k}, & \text{ for }a_i>a_{i+1},\\
   -\sum_{j=0}^{a_{i+1}-a_i-1} x_i^{ a_i -j-1} x_{i+1}^{a_{i+1}+j}\prod_{k \neq i,i+1} x_k^{a_k}, & \text{ for }a_i<a_{i+1},\\
   0, & \text{ for }a_i=a_{i+1}.\end{cases}
   $$ 
   Setting all $x$ variables equal to $1$, we find that the specialization of $\partial_i\mathbf{x}^{\mathbf{a}}$ is bounded above by $|a_i - a_{i+1}| \leq n-i$.
  Applying this bound to every monomial in $\mathfrak{S}_u$, we obtain
  $$\Upsilon_w \leq (n-i)\, \Upsilon_u,$$
  which gives the desired lower bound.

    To establish the upper bound, we use Monk's formula for multiplying Schubert polynomials; see \Cref{eq:monk}.  Evaluating at $1$, this says
    \[
    i\,\Upsilon_w = \sum_{r\leq i<s}\Upsilon_{w\cdot (r,s)},
    \]
    the sum over transpositions $(r,s)$ such that $\ell(w\cdot (r,s)) = \ell(w)+1$.  Evidently from the assumption that $u = w\cdot (i,i+1)$ and $\ell(u)=\ell(w)+1$, $u$ is among the permutations appearing on the RHS.  This gives
    \[
    i\,\Upsilon_w \geq \Upsilon_u
    \]
    as claimed.
\end{proof}

\begin{corollary}\label{cor:bounds} 
    Suppose that two permutations $u$ and $w$ differ by at most a linear number of adjacent transpositions, i.e. 
    $ u = \sigma w \pi$, where $\ell(\sigma), \ell(\pi)\leq a n$ for some constant $a$. 
    Then 
    $$ 2^{-2an\log(n)} \leq \frac{ \Upsilon_u}{\Upsilon_w} \leq 2^{2an\log(n)}$$
\end{corollary}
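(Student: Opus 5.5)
We iterate \Cref{Prop:bounds} along a chain of adjacent transpositions connecting $w$ to $u$. Write $\sigma = s_{j_1}\cdots s_{j_p}$ and $\pi = s_{i_1}\cdots s_{i_q}$ as reduced words, with $p=\ell(\sigma)\le an$ and $q=\ell(\pi)\le an$. Consider the sequence of permutations
\[
w=w_0,\ w_1=w s_{i_1},\ \ldots,\ w_q = w\pi,\ w_{q+1} = s_{j_p} w\pi,\ \ldots,\ w_{p+q} = \sigma w \pi = u,
\]
where each step multiplies by a single adjacent transposition, on the right in the first $q$ steps and on the left in the last $p$ steps. Consecutive permutations $w_{k-1}, w_k$ differ by one adjacent transposition, so their lengths differ by exactly one. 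Thus for each $k$ either $w_k$ covers $w_{k-1}$ in the sense of \Cref{Prop:bounds}, or the roles are reversed.

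For each step we apply \Cref{Prop:bounds} to the pair, with the longer permutation in the role of $u$. Since $1\le i\le n-1$, both $i$ and $n-i$ are at most $n$, so \eqref{eq:bounds} gives $\frac1n \le \Upsilon_{\text{long}}/\Upsilon_{\text{short}}\le n$. The bound is symmetric under inversion of the ratio, so in either case
\[
\frac1n\le \frac{\Upsilon_{w_k}}{\Upsilon_{w_{k-1}}}\le n .
\]
One small point needs checking: for left multiplication the statement allows $u=s_i\cdot w$, so the left steps are covered as well.

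Telescoping over the $p+q\le 2an$ steps gives $n^{-2an}\le \Upsilon_u/\Upsilon_w\le n^{2an}$. Writing $n^{2an}=2^{2an\log_2 n}$ yields the claim, interpreting $\log$ as $\log_2$. If $\log$ means the natural logarithm, the bound is still of the stated form up to the constant, and one could tighten the per-step factor if needed.

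The only potential obstacle is bookkeeping: the intermediate permutations need not move monotonically in Bruhat order, and each step must satisfy the length-increase hypothesis in one direction. Since adjacent transpositions always change length by exactly $\pm1$, this is automatic, and no genuine difficulty arises.
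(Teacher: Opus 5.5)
Your argument is correct and is the paper's own proof: factor $\sigma$ and $\pi$ into at most $an$ simple transpositions each, apply \Cref{Prop:bounds} at every step (each step changes length by exactly one, and both $i$ and $n-i$ are at most $n$), and telescope to get $n^{\pm 2an}=2^{\pm 2an\log_2 n}$. The only loose point is your aside on natural logarithms: read $\log$ as $\log_2$, which is also all the paper's argument yields; the claim that the per-step factor could be tightened to handle $\ln$ is unsupported and not needed.
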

\begin{proof}
Follows by presenting $\sigma$ and $\pi$ as products of at most $an$ simple transpositions each, and repeatedly applying \Cref{Prop:bounds}. 
\end{proof}

\begin{remark}
In particular, \Cref{cor:bounds} implies that if a permutation $w$ is close to maximal, i.e. $\Upsilon_w = 2^{cn^2 + o(n^2)}$, then a permutation $u$ differing from $w$ by a linear number of transpositions will have the same leading term asymptotics. Thus, the counterexamples found in~\Cref{prop:merzon_smirnov_disproved} do not contradict the conjectured leading order asymptotics. 
\end{remark}

To characterize the maximal permutations, we discuss some consequences of the descent formula (\Cref{thm:descent}).

\begin{proposition}
\label{cor:descent}
        Suppose that $w \in S_n$ is such that $\Upsilon_w$ is maximal among all specializations for permutations in $S_n$. Then $\maj(w) \geq \ell(w)$,
				where $\maj(w) \coloneqq \sum_{i \in \Des(w)} i$ is the major index of the permutation~$w$.
\end{proposition}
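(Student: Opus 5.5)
The plan is to use the descent formula of \Cref{thm:descent} directly, together with the maximality of $\Upsilon_w$. If $w=e$, then $\maj(w)=0=\ell(w)$ and there is nothing to prove. So assume $w\ne e$; then $\Des(w)\neq\emptyset$ and $\ell(w)\ge 1$.

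By \eqref{eq:descent},
\[
\Upsilon_w=\sum_{i\in\Des(w)}\frac{i}{\ell(w)}\,\Upsilon_{w\cdot s_i}.
\]
Each $w\cdot s_i$ is a permutation in $S_n$, so maximality gives $\Upsilon_{w\cdot s_i}\le \Upsilon_w$ for every term. Bounding each term this way yields
\[
\Upsilon_w\le \frac{\Upsilon_w}{\ell(w)}\sum_{i\in\Des(w)} i=\frac{\maj(w)}{\ell(w)}\,\Upsilon_w .
\]
Since $\Upsilon_w\ge 1>0$ (for instance, the Rothe BPD $\mathbf{b}_w$ is always counted), we can divide by $\Upsilon_w$. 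This gives $1\le \maj(w)/\ell(w)$, that is, $\maj(w)\ge \ell(w)$.

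There is no real obstacle here. The only points to check are that $\Upsilon_w>0$ and that the case $w=e$, where the formula does not apply, is handled separately; both are immediate. The same argument applied to the second form of \eqref{eq:descent} would also give $\maj(w^{-1})\ge\ell(w)$.
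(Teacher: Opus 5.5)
Your proposal is correct and follows essentially the same route as the paper: apply the descent formula, bound each $\Upsilon_{w\cdot s_i}$ by $\Upsilon_w$ using maximality, and cancel. Your explicit handling of $w=e$ and of $\Upsilon_w>0$ fills in two small points the paper leaves implicit, and your remark that the second form of the descent formula gives $\maj(w^{-1})\ge\ell(w)$ is also correct.
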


\begin{proof}
    We have that $\Upsilon_{w\cdot s_i} \leq \Upsilon_w$, so \eqref{eq:descent} gives
    \begin{equation*}
			\Upsilon_w = \sum_{i \in \Des(w)}
			\frac{i}{\ell(w)} \cdot \Upsilon_{w \cdot s_i} \leq   \sum_{i \in \Des(w)}
			\frac{i}{\ell(w)} \cdot \Upsilon_w.
		\end{equation*}
    After cancellation, we get $\ell(w) \leq \sum_{i\in \Des(w)} i$, as desired.
\end{proof}
\begin{remark}
	One can directly check that
	\Cref{cor:descent} holds
	for all layered
	permutations.
\end{remark}

\subsection{Connectivity and exact sampling}
\label{sec:connectivity_open}

We bypassed the issue of connectivity of RBPDs under local flips
in $2\times 2$ windows by adding droop moves to the MCMC sampler,
but the question of connectivity under local flips alone
remains open (\Cref{conj:rbpd_connectivity}).
Neither $2\times 2$ flips nor droops/undroops seem to be 
appropriate ingredients for an exact
sampling algorithm with CFTP \cite{ProppWilsonCP},
and so the question of exact sampling of uniform RBPDs also remains open.
We remark that sometimes exact sampling can be achieved by a non-local Markov chain
utilizing algebraic/combinatorial structure of the model,
for example, in shuffling algorithms for domino \cite{propp2003generalized}
and lozenge \cite{borodin-gr2009q} tilings.

\subsection{Mixing time}
\label{sec:mixing_open}

We have no rigorous mixing time bounds for the MCMC chain
of \Cref{sec:mcmc_sampler}.
Our experiments (\Cref{fig:mixing_comparison})
suggest that with sufficiently large rectangular moves
(such as the reverse log-uniform distribution on rectangle sizes),
the chain mixes from all starting states in polynomial time.
On the other hand,
the chain restricted to $2\times 2$ flips alone
appears to mix exponentially slowly (due to ``stuck'' states as in \Cref{fig:permuton_id}).
For comparison, 
Glauber dynamics for the six-vertex model
(which performs the same $2\times 2$ flips as in \Cref{sec:local_flips}, but with probabilities depending 
on the six-vertex weights)
mixes exponentially slowly 
for certain parameter regimes \cite{FahrbachRandall2019}.
Establishing a polynomial/exponential distinction rigorously
for the RBPD chain is
an important open problem.

\subsection{Other specializations and measures}
\label{sec:other_specializations}

One can extend our methods beyond the principal specialization
$\Upsilon_w = \mathfrak{S}_w(1^n)$
in several directions.
The $q$-specialization $\mathfrak{S}_w(1, q, q^2, \ldots)$
refines $\Upsilon_w$, and one can readily assign a $q$-weight to each RBPD,
and perform a similar MCMC sampling investigation.
The double Schubert polynomials $\mathfrak{S}_w(x;\, y)$
(Appendix~\ref{app:schubert_background})
also enjoy
transition and cotransition formulas
\cite{Knutson2019cotransition}, \cite{LascouxSchutzenberger1985LR}.
For them, one can adapt our algorithms 
described in \Cref{sec:computation}
and compute $\mathfrak{S}_w(x;\, y)$ at various specialization points.
The Grothendieck polynomials
$\mathfrak{G}^{(\beta)}_w(x)$
deform the Schubert polynomials by adding a parameter $\beta$
(setting $\beta=0$ recovers the Schubert case).
For all $\beta>0$, the bumpless pipe dreams are allowed to be non-reduced,
but their enumeration involves the factors $\beta^{-\ell(w)}$. 
For $\beta=1$, these factors disappear, and the bumpless pipe dream picture
becomes equivalent to domino tilings of the Aztec diamond \cite[Section~6]{GrothendieckShenanigans2024}.
The permuton picture was also developed for the $\beta=1$ case in \cite{GrothendieckShenanigans2024}.
It would be interesting to explore MCMC sampling and principal specializations
for all $0<\beta<1$.

Another natural relative of the Schubert measure is the
\emph{symmetrized product measure}
$\Prob(w) \propto \Upsilon_w \cdot \Upsilon_{w_0 w}$.
Its expected length can be computed exactly from the Cauchy identity
for Schubert polynomials \cite{MacdonaldSchubertBook}:
\begin{equation}
\label{eq:cauchy_schubert}
\sum_{w \in S_n} \mathfrak{S}_w(x)\,\mathfrak{S}_{w_0 w}(y)
= \prod_{i+j \le n}(x_i + y_j).
\end{equation}
Setting $x_i = t$ and $y_j = 1$ and using
$\mathfrak{S}_w(t^n) = t^{\ell(w)}\Upsilon_w$
gives
$\sum_w t^{\ell(w)}\Upsilon_w\,\Upsilon_{w_0 w} = (1+t)^{\binom{n}{2}}$.
Taking $\frac{d}{dt}\big|_{t=1}$ of both sides, divided by
the value at $t=1$, yields
$\mathbb{E}[\ell(w)] = \binom{n}{2}/2$.
This contrasts with the Schubert measure \eqref{eq:schubert_measure},
which (conjecturally) concentrates on permutations of length $\approx 0.396\binom{n}{2}$, see
\Cref{lem:layered_length_ratio}.

\subsection{Code availability}
\label{sec:code}

The code accompanying this paper is available at
\url{https://github.com/lenis2000/schubert-computations-sampling}.
The repository includes:
\begin{enumerate}[$\bullet$]
\item \texttt{schubert.cpp} ---
descent, cotransition, and transition formulas
for computing $\Upsilon_w$, together with exact and heuristic max search.
\item \texttt{bpd\_mcmc.cpp} ---
MCMC sampler for reduced bumpless pipe dreams.
\item \texttt{bpd\_cftp\_sampler.cpp} ---
backward-CFTP sampler used for the negative results of \Cref{sec:cftp_failure}.
\item Diagnostic and validation tools
for CFTP failure analysis and RBPD connectivity.
\end{enumerate}

\appendix
\section{Background on Schubert polynomials}
\label{app:schubert_background}

We collect here the standard definitions underlying the recurrences
used in \Cref{sec:descent,sec:cotrans}.
Standard references include
Macdonald \cite{MacdonaldSchubertBook} and Manivel \cite{ManivelBook}.

For $1 \le i \le n-1$, the \emph{divided difference operator}
$\partial_i$ acts on polynomials $f \in \mathbb{Z}[x_1,\ldots,x_n]$ by
\begin{equation}
	\label{eq:divided_diff}
	\partial_i f \coloneqq \frac{f - s_i \cdot f}{x_i - x_{i+1}},
\end{equation}
where $s_i$ acts on $f$ by permuting $x_i$ and $x_{i+1}$.
These operators satisfy the nilCoxeter relations:
$\partial_i^2 = 0$,
$\partial_i \partial_j = \partial_j \partial_i$ for $|i-j| \ge 2$,
and $\partial_i \partial_{i+1} \partial_i = \partial_{i+1} \partial_i \partial_{i+1}$.

Schubert polynomials are recursively determined by the following conditions:
\begin{enumerate}[$\bullet$]
	\item For the longest permutation $w_0 = (n, n-1, \ldots, 2, 1)$,
		we have
		$\mathfrak{S}_{w_0} = x_1^{n-1}\ssp x_2^{n-2} \cdots x_{n-1}$.
	\item For all $w \in S_n$ and $i = 1, \ldots, n-1$ such that
		$\ell(w s_i) = \ell(w) + 1$,
		we have
		$\mathfrak{S}_w = \partial_i\ssp \mathfrak{S}_{w s_i}$.
\end{enumerate}
This recursive definition was introduced by
Bernstein, Gelfand, and Gelfand \cite{bernstein1973schubert}
and Demazure \cite{demazure1974desingularisation}
in the context of Schubert classes in the cohomology of flag varieties,
and made explicit by
Lascoux and Sch\"utzenberger \cite{LascouxSchutzenberger1982Schubert}.

\medskip
Lascoux and Sch\"utzenberger  \cite{LascouxSchutzenberger1985Interpolation} also introduced \emph{double Schubert polynomials}
\begin{equation*}
	\mathfrak{S}_w(x_1, \ldots, x_n;\, y_1, \ldots, y_n),
\end{equation*}
depending on two sets of variables.
They are defined by the same divided difference recursion \eqref{eq:divided_diff},
with $\partial_i$ acting on the $x$-variables only, and the modified base case
\[
	\mathfrak{S}_{w_0}(x;\, y) = \prod_{i+j \le n} (x_i - y_j).
\]
The ordinary Schubert polynomial is recovered by setting $y = 0$:
$\mathfrak{S}_w(x) = \mathfrak{S}_w(x;\, 0, \ldots, 0)$.
The transition formula of Lascoux-Sch\"utzenberger \cite{LascouxSchutzenberger1985LR} (\Cref{thm:trans}, see also \cite{KohnertVeigneau1997Schubert,Weigandt2020_bumpless}) and the cotransition formula of Knutson \cite{Knutson2019cotransition}
(\Cref{thm:cotrans}) hold more generally for double Schubert polynomials;
our specialization $\Upsilon_w = \mathfrak{S}_w(1^n)$
corresponds to $x_i = 1$, $y_j = 0$ for all $i, j$.
The transition and cotransition formulas follow quickly from {\it Monk's formula}, which describes how to multiply by a linear Schubert polynomial $\mathfrak{S}_{s_k}$.  Restricting attention to single Schubert polynomials, the formula is
\begin{equation}\label{eq:monk}
  (x_1+\cdots+x_k)\cdot \mathfrak{S}_w = \sum_{i\leq k<j} \mathfrak{S}_{w\cdot(i,j)},
\end{equation}
the sum over transpositions $(i,j)$ such that $\ell( w\cdot (i,j) ) = \ell(w)+1$.  This is equivalent to
\begin{equation}
    x_r \mathfrak{S}_w = -\sum_{i<r} \mathfrak{S}_{w\cdot(i,r)} + \sum_{j>r}\mathfrak{S}_{w\cdot(r,j)},
\end{equation}
where both sums run over only those terms of length equal to $\ell(w)+1$.  Transition is the case where the second sum consists of exactly one term (move the first sum to the other side to obtain a positive formula); this can occur for several choices of $r$.  Cotransition is the case where the first sum is empty; this only occurs for $r$ as in Theorem~\ref{thm:cotrans}.

\medskip
Schubert polynomials are independent of the ambient symmetric group: if
$\iota\colon S_n \hookrightarrow S_{n+1}$ denotes the natural inclusion
(fixing $n+1$), then
\begin{equation}
	\label{eq:stability}
	\mathfrak{S}_w(x_1, \ldots, x_n) = \mathfrak{S}_{\iota(w)}(x_1, \ldots, x_{n+1}).
\end{equation}
In particular, $\mathfrak{S}_{\iota(w)}$ does not depend on $x_{n+1}$,
so the principal specialization satisfies
$\Upsilon_w = \mathfrak{S}_w(1^n) = \mathfrak{S}_{\iota(w)}(1^{n+1})$.
This means $\Upsilon_w$ depends only on the
permutation $w$ belonging to the infinite symmetric group $S_\infty$
(i.e., the inductive limit of the $S_n$'s),
provided that the number of ones
in the Schubert polynomial is large enough to accommodate $w$.


\bibliography{bib}

\begin{bibdiv}
\begin{biblist}

\bib{Agg6V}{article}{
      author={Aggarwal, A.},
       title={{Arctic Boundaries of the Ice Model on Three-Bundle Domains}},
        date={2020},
     journal={Invent. Math.},
      volume={220},
       pages={611\ndash 671},
        note={arXiv:1812.03847 [math.PR]},
}

\bib{BjornerBrenti2005}{book}{
      author={Bj{\"o}rner, A.},
      author={Brenti, F.},
       title={{Combinatorics of Coxeter Groups}},
      series={Grad. Texts Math.},
   publisher={Springer},
     address={Berlin, Heidelberg},
        date={2005},
      volume={231},
}

\bib{BilleyBergeron}{article}{
      author={Bergeron, N.},
      author={Billey, S.},
       title={{R{C}-graphs and {Schubert} polynomials}},
        date={1993},
        ISSN={1058-6458,1944-950X},
     journal={Experiment. Math.},
      volume={2},
      number={4},
       pages={257\ndash 269},
         url={http://projecteuclid.org/euclid.em/1048516036},
      review={\MR{1281474}},
}

\bib{bufetov2016fluctuations}{article}{
      author={Bufetov, A.},
      author={Gorin, V.},
       title={{Fluctuations of particle systems determined by Schur generating
  functions}},
        date={2018},
     journal={Adv. Math.},
      volume={338},
       pages={702\ndash 781},
        note={arXiv:1604.01110 [math.PR]},
}

\bib{bernstein1973schubert}{article}{
      author={Bernstein, I.N.},
      author={Gelfand, I.M.},
      author={Gelfand, S.I.},
       title={{Schubert cells, and the cohomology of the spaces G/P}},
        date={1973},
     journal={Russian Math. Surveys},
      volume={28},
      number={3},
       pages={1\ndash 26},
}

\bib{BilleyGaoPawlowski2025}{article}{
      author={Billey, S.~C.},
      author={Gao, Y.},
      author={Pawlowski, B.},
       title={{Introduction to the Cohomology of the Flag Variety}},
        date={2025},
     journal={arXiv preprint},
        note={arXiv:2506.21064 [math.CO]},
}

\bib{borodin-gr2009q}{article}{
      author={Borodin, A.},
      author={Gorin, V.},
      author={Rains, E.},
       title={{q-Distributions on boxed plane partitions}},
        date={2010},
     journal={Selecta Math.},
      volume={16},
      number={4},
       pages={731\ndash 789},
        note={arXiv:0905.0679 [math-ph]},
}

\bib{BilleyHolroydYoung2019}{article}{
      author={Billey, S.~C.},
      author={Holroyd, A.~E.},
      author={Young, B.},
       title={{A bijective proof of {M}acdonald's reduced word formula}},
        date={2019},
     journal={Algebr. Comb.},
      volume={2},
      number={2},
       pages={217\ndash 248},
        note={arXiv:1702.02936 [math.CO]},
}

\bib{Bressoud1999}{book}{
      author={Bressoud, D.~M.},
       title={{Proofs and Confirmations: The Story of the Alternating-Sign
  Matrix Conjecture}},
   publisher={Cambridge University Press},
        date={1999},
        ISBN={9780511613449},
}

\bib{cohn-elki-prop-96}{article}{
      author={Cohn, H.},
      author={Elkies, N.},
      author={Propp, J.},
       title={{Local statistics for random domino tilings of the Aztec
  diamond}},
        date={1996},
     journal={Duke Math. J.},
      volume={85},
      number={1},
       pages={117\ndash 166},
        note={arXiv:math/0008243 [math.CO]},
}

\bib{ChelkakLaslierRusskikh2021GFF}{article}{
      author={Chelkak, D.},
      author={Laslier, B.},
      author={Russkikh, M.},
       title={{Bipartite dimer model: perfect t-embeddings and Lorentz-minimal
  surfaces}},
        date={2021},
     journal={arXiv preprint},
        note={arXiv:2109.06272 [math-ph]},
}

\bib{colomo2010arctic}{article}{
      author={Colomo, F.},
      author={Pronko, A.},
       title={The arctic curve of the domain-wall six-vertex model},
        date={2010},
     journal={J. Stat. Phys},
      volume={138},
      number={4-5},
       pages={662\ndash 700},
        note={arXiv:0907.1264 [math-ph]},
}

\bib{ColomoPronko2010limitshape}{article}{
      author={Colomo, F.},
      author={Pronko, A.~G.},
       title={{The limit shape of large alternating sign matrices}},
        date={2010},
     journal={SIAM J. Discrete Math.},
      volume={24},
      number={4},
       pages={1558\ndash 1571},
        note={arXiv:0803.2697 [math-ph]},
}

\bib{demazure1974desingularisation}{article}{
      author={Demazure, M.},
       title={{D{\'e}singularisation des vari{\'e}t{\'e}s de Schubert
  g{\'e}n{\'e}ralis{\'e}es}},
        date={1974},
     journal={Ann. Sci. {\'E}cole Norm. Sup{\'e}r.},
      volume={7},
      number={1},
       pages={53\ndash 88},
}

\bib{deGierKenyon2021limit}{article}{
      author={de~Gier, J.},
      author={Kenyon, R.},
      author={Watson, S.S.},
       title={Limit shapes for the asymmetric five vertex model},
        date={2021},
     journal={Commun. Math. Phys.},
      volume={385},
       pages={793\ndash 836},
        note={arXiv:1812.11934 [math.PR]},
}

\bib{FanGuoSun2018Bumpless}{article}{
      author={Fan, N.J.Y.},
      author={Guo, P.L.},
      author={Sun, S.C.C.},
       title={{Bumpless pipedreams, reduced word tableaux and {Stanley}
  symmetric functions}},
        date={2018},
     journal={arXiv preprint},
        note={arXiv:1810.11916 [math.CO]},
}

\bib{FK}{article}{
      author={Fomin, S.},
      author={Kirillov, A.N.},
       title={{Reduced words and plane partitions}},
        date={1997},
        ISSN={0925-9899,1572-9192},
     journal={J. Algebraic Combin.},
      volume={6},
      number={4},
       pages={311\ndash 319},
         url={https://doi.org/10.1023/A:1008694825493},
      review={\MR{1471891}},
}

\bib{FahrbachRandall2019}{inproceedings}{
      author={Fahrbach, M.},
      author={Randall, D.},
       title={{Slow Mixing of {Glauber} Dynamics for the Six-Vertex Model in
  the Ordered Phases}},
        date={2019},
   booktitle={Approx/random 2019},
      series={LIPIcs},
      volume={145},
       pages={37:1\ndash 37:20},
        note={arXiv:1904.01495 [cs.DS]},
}

\bib{FominStanley1994}{article}{
      author={Fomin, S.},
      author={Stanley, R.~P.},
       title={{Schubert polynomials and the nil{C}oxeter algebra}},
        date={1994},
     journal={Adv. Math.},
      volume={103},
      number={2},
       pages={196\ndash 207},
}

\bib{grubel2023ranks}{article}{
      author={Gruebel, R.},
       title={{Ranks, copulas, and permutons}},
        date={2024},
     journal={Metrika},
      volume={87},
       pages={155\ndash 182},
        note={arXiv:2206.12153 [math.ST]},
}

\bib{hoppen2013limits}{article}{
      author={Hoppen, C.},
      author={Kohayakawa, Y.},
      author={Moreira, C.~G.},
      author={R{\'a}th, B.},
      author={Sampaio, R.~M.},
       title={{Limits of permutation sequences}},
        date={2013},
     journal={J. Comb. Theory Ser. B},
      volume={103},
      number={1},
       pages={93\ndash 113},
        note={arXiv:1103.5844 [math.CO]},
}

\bib{jockusch1998random}{article}{
      author={Jockusch, W.},
      author={Propp, J.},
      author={Shor, P.},
       title={{Random Domino Tilings and the Arctic Circle Theorem}},
        date={1998},
     journal={arXiv preprint},
        note={arXiv:math/9801068 [math.CO]},
}

\bib{Kenyon2001GFF}{article}{
      author={Kenyon, R.},
       title={{Dominos and the Gaussian Free Field}},
        date={2001},
     journal={Ann. Probab.},
      volume={29},
      number={3},
       pages={1128\ndash 1137},
        note={arXiv:math-ph/0002027},
}

\bib{Knutson2019cotransition}{article}{
      author={Knutson, A.},
       title={{{Schubert} polynomials, pipe dreams, equivariant classes, and a
  co-transition formula}},
        date={2019},
     journal={arXiv preprint},
        note={arXiv:1909.13777 [math.CO]},
}

\bib{KOS2006}{article}{
      author={Kenyon, R.},
      author={Okounkov, A.},
      author={Sheffield, S.},
       title={Dimers and amoebae},
        date={2006},
     journal={Ann. Math.},
      volume={163},
       pages={1019\ndash 1056},
        note={arXiv:math-ph/0311005},
}

\bib{kuperberg1996another}{article}{
      author={Kuperberg, G.},
       title={Another proof of the alternative-sign matrix conjecture},
        date={1996},
     journal={Intern. Math. Research Notices},
      volume={1996},
      number={3},
       pages={139\ndash 150},
}

\bib{KohnertVeigneau1997Schubert}{article}{
      author={Kohnert, A.},
      author={Veigneau, S.},
       title={{Using {Schubert} basis to compute with multivariate
  polynomials}},
        date={1997},
     journal={Adv. Appl. Math.},
      volume={19},
      number={1},
       pages={45\ndash 60},
}

\bib{Lascoux02ice}{unpublished}{
      author={Lascoux, A.},
       title={{Chern and Yang through ice}},
        date={2002},
        note={Unpublished manuscript},
}

\bib{LamLeeShimozono2021BPD}{article}{
      author={Lam, T.},
      author={Lee, S.~J.},
      author={Shimozono, M.},
       title={{Back stable {Schubert} calculus}},
        date={2021},
     journal={Compos. Math.},
      volume={157},
      number={5},
       pages={883\ndash 962},
        note={arXiv:1806.11233 [math.CO]},
}

\bib{LascouxSchutzenberger1982Schubert}{article}{
      author={Lascoux, A.},
      author={Sch\"utzenberger, M.-P.},
       title={{Polyn\^omes de {Schubert}}},
        date={1982},
     journal={C. R. Acad. Sci. Paris S\'er. I Math.},
      volume={294},
       pages={447\ndash 450},
}

\bib{LascouxSchutzenberger1985Interpolation}{incollection}{
      author={Lascoux, A.},
      author={Sch\"utzenberger, M.-P.},
       title={{Interpolation de {Newton} \`a plusieurs variables}},
        date={1985},
   booktitle={S\'eminaire d'alg\`ebre {P}aul {D}ubreil et {M}arie-{P}aule
  {M}alliavin, 36\`eme ann\'ee (paris, 1983--1984)},
      series={Lecture Notes in Math.},
      volume={1146},
   publisher={Springer},
     address={Berlin},
       pages={161\ndash 175},
}

\bib{LascouxSchutzenberger1985LR}{article}{
      author={Lascoux, A.},
      author={Sch\"utzenberger, M.-P.},
       title={{Schubert polynomials and the {Littlewood}-{Richardson} rule}},
        date={1985},
     journal={Lett. Math. Phys.},
      volume={10},
      number={2--3},
       pages={111\ndash 124},
}

\bib{MacdonaldSchubertBook}{book}{
      author={Macdonald, I.G.},
       title={{Notes on Schubert Polynomials}},
   publisher={Universit\'e du Qu\'ebec \`a Montr\'eal},
        date={1991},
}

\bib{ManivelBook}{book}{
      author={Manivel, L.},
       title={{Symmetric functions, {Schubert} polynomials and degeneracy
  loci}},
      series={SMF/AMS Texts and Monographs},
   publisher={American Mathematical Society, Providence, RI; Soci\'et\'e{}
  Math\'ematique de France, Paris},
        date={2001},
      volume={6},
        ISBN={0-8218-2154-7},
        note={Translated from the 1998 French original by John R. Swallow,
  Cours Sp\'ecialis\'es, 3. [Specialized Courses]},
      review={\MR{1852463}},
}

\bib{MoralesPakPanova2019}{article}{
      author={Morales, A.H.},
      author={Pak, I.},
      author={Panova, G.},
       title={{Asymptotics of principal evaluations of Schubert polynomials for
  layered permutations}},
        date={2019},
     journal={Proc. Amer. Math. Soc.},
      volume={147},
       pages={1377\ndash 1389},
        note={arXiv:1805.04341 [math.CO]},
}

\bib{GrothendieckShenanigans2024}{article}{
      author={Morales, A.H.},
      author={Panova, G.},
      author={Petrov, L.},
      author={Yeliussizov, D.},
       title={{Grothendieck Shenanigans: Permutons from pipe dreams via
  integrable probability}},
        date={2025},
     journal={Adv. Math.},
      volume={480},
       pages={110510},
        note={arXiv:2407.21653 [math.PR]},
}

\bib{monical2022reduced}{article}{
      author={Monical, C.},
      author={Pankow, B.},
      author={Yong, A.},
       title={{Reduced word enumeration, complexity, and randomization}},
        date={2022},
     journal={Electron. J. Combin.},
      volume={29},
      number={2},
       pages={P2.46},
}

\bib{merzon2016determinantal}{article}{
      author={Merzon, G.},
      author={Smirnov, E.},
       title={{Determinantal identities for flagged Schur and Schubert
  polynomials}},
        date={2016},
     journal={Eur. J. Math.},
      volume={2},
       pages={227\ndash 245},
        note={arXiv:1410.6857 [math.CO]},
}

\bib{OEIS}{misc}{
      author={{OEIS Foundation Inc.}},
       title={The {O}n-{L}ine {E}ncyclopedia of {I}nteger {S}equences},
        date={2025},
         url={https://oeis.org},
        note={Published electronically at \url{https://oeis.org}},
}

\bib{Petrov2012GFF}{article}{
      author={Petrov, L.},
       title={{Asymptotics of Uniformly Random Lozenge Tilings of Polygons.
  Gaussian Free Field}},
        date={2015},
     journal={{Ann. Probab.}},
      volume={43},
      number={1},
       pages={1\ndash 43},
      eprint={1206.5123},
        note={arXiv:1206.5123 [math.PR].},
}

\bib{propp2003generalized}{article}{
      author={Propp, J.},
       title={Generalized domino-shuffling},
        date={2003},
     journal={Theoretical Computer Science},
      volume={303},
      number={2-3},
       pages={267\ndash 301},
        note={arXiv:math/0111034 [math.CO]},
}

\bib{ProppWilsonCP}{incollection}{
      author={Propp, J.},
      author={Wilson, D.},
       title={{Coupling from the past: a user's guide}},
        date={1998},
   booktitle={Microsurveys in discrete probability ({P}rinceton, {NJ}, 1997)},
      series={DIMACS Ser. Discrete Math. Theoret. Comput. Sci.},
      volume={41},
   publisher={AMS},
       pages={181\ndash 192},
}

\bib{romera-paredes2024FunSearch}{article}{
      author={Romera-Paredes, Bernardino},
      author={Barekatain, Mohammadamin},
      author={Novikov, Alexander},
      author={Balog, Matej},
      author={Kumar, M.~Pawan},
      author={Dupont, Emilien},
      author={Ruiz, Francisco J.~R.},
      author={Ellenberg, Jordan~S.},
      author={Wang, Pengming},
      author={Fawzi, Omar},
      author={Kohli, Pushmeet},
      author={Fawzi, Alhussein},
       title={Mathematical discoveries from program search with large language
  models},
        date={2024},
     journal={Nature},
      volume={625},
      number={7995},
       pages={468\ndash 475},
}

\bib{Sheffield2007GFF}{article}{
      author={Sheffield, S.},
       title={Gaussian free fields for mathematicians},
        date={2007},
     journal={Probab. Theory Relat. Fields},
      volume={139},
      number={3-4},
       pages={521\ndash 541},
        note={arXiv:math/0312099 [math.PR]},
}

\bib{stanley2017some}{article}{
      author={Stanley, R.},
       title={{Some Schubert shenanigans}},
        date={2017},
     journal={arXiv preprint},
        note={arXiv:1704.00851 [math.CO]},
}

\bib{Weigandt2020_bumpless}{article}{
      author={Weigandt, A.},
       title={{Bumpless pipe dreams and alternating sign matrices}},
        date={2021},
     journal={J. Comb. Theory Ser. A},
      volume={182},
       pages={105470},
        note={arXiv:2003.07342 [math.CO]},
}

\bib{ZinnJustin20096Vertex}{article}{
      author={Zinn-Justin, P.},
       title={{Six-Vertex, Loop and Tiling models: Integrability and
  Combinatorics}},
        date={2009},
     journal={arXiv preprint},
        note={arXiv:0901.0665 [math-ph]},
}

\end{biblist}
\end{bibdiv}
\bibliographystyle{alpha}

\end{document}